\newcommand{\vbf}{\bf}
\newcommand{\ls}{\lesssim}
\newcommand{\E}{\mathcal E}
\newcommand{\ci}[1]{_{{}_{\!\scriptstyle{#1}}}}
\newcommand{\Be}{\begin{equation}}
\newcommand{\Ee}{\end{equation}}
\newcommand{\Bea}{\begin{eqnarray}}
\newcommand{\Eea}{\end{eqnarray}}
\newcommand{\Beas}{\begin{eqnarray*}}
\newcommand{\Eeas}{\end{eqnarray*}}
\newcommand{\Benu}{\begin{enumerate}}
\newcommand{\Eenu}{\end{enumerate}}
\newcommand{\Bi}{\begin{itemize}}
\newcommand{\Ei}{\end{itemize}}
\def\Jscr{{\mathcal J}}
\def\sh{{\text{sh}}}
\def\lg{{\text{lg}}}
\def\h{h}
\def\intslash{\rlap{\kern  .32em $\mspace {.5mu}\backslash$ }\int}
\def\qsl{{\rlap{\kern  .32em $\mspace {.5mu}\backslash$ }\int_{Q_x}}}
\def\Re{\operatorname{Re\,}}
\def\Q{\mathcal Q}
\def\emph#1{{\it #1 }}
\def\cf{{\it cf}}
\def\dist{{\text{\it dist}}}
\def\supp{{\text{\rm supp}}}
\def\rad{{\text{\it rad}}}
\def\inn#1#2{\langle#1,#2\rangle}
\def\biginn#1#2{\big\langle#1,#2\big\rangle}
\def\noi{\noindent}
\def\meas{{\text{\rm meas}}}
\def\lc{\lesssim}
\def\eps{\varepsilon}
\def\ka{\kappa}
\def\la{\lambda}
              \def\Om{\Omega}
\def\fQ{{\mathfrak {Q}}}
\def\fS{{\mathfrak {S}}}
\def\fW{{\mathfrak {W}}}
\def\fZ{{\mathfrak {Z}}}
\def\fm{{\mathfrak {m}}}
\def\fz{{\mathfrak {z}}}
\def\bbC{{\mathbb {C}}}
\def\bbN{{\mathbb {N}}}
\def\bbR{{\mathbb {R}}}
\def\bbZ{{\mathbb {Z}}}
\def\sH{{\mathscr {H}}}
\def\sK{{\mathscr {K}}}
\def\cB{{\mathcal {B}}}
\def\cE{{\mathcal {E}}}
\def\cF{{\mathcal {F}}}
\def\cG{{\mathcal {G}}}
\def\cH{{\mathcal {H}}}
\def\cL{{\mathcal {L}}}
\def\cM{{\mathcal {M}}}
\def\cQ{{\mathcal {Q}}}
\def\cT{{\mathcal {T}}}
\def\cW{{\mathcal {W}}}
\def\Q{{\hbox{\bf Q}}}
\def\be#1{\begin{equation}\label{ #1}}
\def\endeq{\end{equation}}
\def\endal{\end{align}}
\def\bas{\begin{align*}}
\def\eas{\end{align*}}
\def\bi{\begin{itemize}}
\def\ei{\end{itemize}}
\def\eps{\varepsilon}
\def\emph#1{{\it #1}}
\def\textbf#1{{\bf #1}}
\theoremstyle{plain}
  \newtheorem{theorem}{Theorem}[section]
   \newtheorem{proposition}[theorem]{Proposition}
   \newtheorem{lemma}[theorem]{Lemma}
   \newtheorem{corollary}[theorem]{Corollary}
\theoremstyle{remark}
   \newtheorem{remark}[theorem]{Remark}
\theoremstyle{definition}
   \newtheorem{hypothesis}[theorem]{Hypothesis}
\begin{document}

\title[Square functions and maximal operators]{Square functions and maximal 
operators \\ associated with radial Fourier multipliers}

\dedicatory{Dedicated to Eli Stein}

\author{Sanghyuk Lee \ \ \ \ Keith M. Rogers \ \ \ \ Andreas Seeger}

\subjclass{42B15, 42B25}
\keywords{Square functions, Riesz means,  spherical means, maximal Bochner--Riesz operator, radial multipliers}

\thanks{Supported in part by NRF grant 2012008373,
ERC grant 277778,
MINECO grants MTM2010-16518, SEV-2011-0087  and NSF grant 1200261}

\maketitle



We begin with an overview  on   square functions for 
spherical and Bochner--Riesz means which were introduced by Eli Stein, 
 and discuss  their implications for radial multipliers and associated maximal functions. We then prove new endpoint estimates for these square functions,  for the maximal Bochner--Riesz operator, 
and for  more general 
classes of 
radial Fourier multipliers.

\section*{Overview}
\subsubsection*{Square functions}
The classical Littlewood--Paley
functions on $\bbR^d$ are  defined by
\[g[f]= \Big(\int_0^\infty \Big|\frac{\partial}{\partial t}
P_t f\Big|^2t\,dt
\Big)^{1/2}
\]
where $(P_t)_{t>0}$   is an approximation of the identity defined
by the dilates of a \lq nice\rq \, kernel (for example $(P_t)$ may be
the Poisson or the heat semigroup).
Their  significance in harmonic analysis, and many important variants and generalizations
have been discussed in  Stein's monographs \cite{stein70}, \cite{stein-si},
\cite{steinharmonic}, in the survey \cite{stein-wainger}
by Stein and Wainger,  and in the  historical article \cite{stein85zyg}.

Here we focus on $L^p$-bounds for
 two square functions introduced by Stein, for which
$(P_t)$ is replaced by a  family of operators with rougher kernels
or multipliers.
The first  is  generated by the  {\it
generalized  spherical means}
$$
A^\beta_t f(x) =
\frac{1}{\Gamma(\beta)}
\int_{|y|\le 1}\,
(1-|y|^2)^{\beta-1} f(x-ty)\,dy
$$
defined {\it a priori} for $\Re \beta>0$. The definition can be extended
to $\Re \beta\le 0$  by analytic continuation;
for $\beta=0$ we recover the standard
 spherical means.
In    \cite{stein76}  Stein
used  (a variant of)
the square
function
$$ \mathcal G_\beta f=
\Big(\int_0^\infty \Big| \frac{\partial}{\partial t}
A^{\beta}_t \!f\Big|^2
t\,dt\Big)^{1/2}
$$
to prove $L^p$-estimates
for  the maximal function
$\sup_{t>0} |A^{\beta-1/2+\eps}_t  f|$,
in parti\-cular he established pointwise convergence for the standard spherical means when $p>\frac{d}{d-1}$ and $d\ge 3$; see also \cite{stein-wainger}.

The  second square function
$$G_\alpha f =\Big(\int_0^\infty \Big| \frac{\partial}{\partial t}
R^{\alpha}_t \!f\Big|^2 t\,dt\Big)^{1/2},
$$
generated by the
Bochner--Riesz means
$$ R^\alpha_t f(x)
= \frac{1}{(2\pi)^{d}}\int_{|\xi|\le t}\Big(1-\frac{|\xi|^2}{t^2}\Big)^\alpha\widehat f(\xi) \,e^{i\inn x\xi} \,d\xi\,,$$
 was introduced  in Stein's 1958 paper
\cite{stein58} and used to control
the maximal function
$\sup_{t>0} |R^{\alpha-1/2+\eps}_t  f|$ for $f\in L^2$ in order to
prove almost everywhere convergence for Bochner--Riesz means of both
Fourier integrals and series (see also Chapter VII in \cite{stw}).
Later, starting with the work of Carbery  \cite{carbery}, it was recognized that sharp $L^p$ bounds for $G_\alpha$ with $p>2$ imply 
 sharp  $L^p$-bounds for maximal functions
associated with  Bochner--Riesz means and then also 
maximal functions  associated with more general classes of
 radial Fourier multipliers (\cite{caesc}, \cite{dt}).

In \cite{stein-wainger},
Stein and Wainger posed the problem of investigating
the relationships between various square functions. Addressing this
problem,
Sunouchi \cite{su2} (in one dimension) and
Kaneko and Sunouchi \cite{ks} (in higher dimensions)
used Plancherel's theorem to establish among other things the
   uniform pointwise  equivalence
\Be\label{ptwequiv}
G_\alpha f(x) \approx
\cG_\beta f(x), \quad \beta=\alpha-\tfrac{d-2}{2}.
\Ee
In view of this remarkable result we  shall  consider
$G_\alpha$ only.

\subsubsection*{Implications for radial multipliers} \label{radialFmsection}
We   recall Stein's point of view for proving results for Fourier
multipliers from Littlewood--Paley theory.
Suppose the convolution operator $\cT$  is given by
$\widehat{\cT f}=h\widehat f$
where $h$ satisfies the assumptions of the H\"ormander multiplier theorem.
 That is,
if $\varphi$ is a radial nontrivial $C^\infty$ function with compact support away from the origin,
 and $L^2_\alpha(\bbR^d)$ is the usual Sobolev space, it is assumed that
$\sup_{t>0} \|\varphi h(t\,\cdot\,)\|_{L^2_\alpha}$ is finite for some $\alpha>d/2$.
Under this assumption $\cT$ is bounded on $L^p$ for $1<p<\infty$
(\cite{hoer}, \cite{stein-si}, \cite{triebel}).
In Chapter IV of  the monograph  \cite{stein-si},
 Stein approached this result
by establishing the pointwise inequality
\Be\label{steinptw}
g[\cT\!f](x)  \le C\sup_{t>0} \|\varphi h(t\,\cdot\,)\|_{L^2_\alpha} \,g_{\la(\alpha)}^*[f],
\Ee where $g$ is a standard  Littlewood--Paley function and
$g_{\la}^*$ is a  tangential  variant of $g$ which does not depend on the specific multiplier.  As
$\|g_{\la(\alpha)}^* [f]\|_p\lc \|f\|_p$ for $2\le p<\infty$ and $\alpha>d/2$,
this proves the theorem since (under certain nondegeneracy assumptions on the generating kernel) one also has $\|g[f]\|_p\approx\|f\|_p$ for $1<p<\infty$.

A similar point of view was later used  for {\it radial} Fourier
multipliers. Let $m$ be a bounded function
on $\bbR^+$, let  $\varphi_\circ\in C^\infty_0(1,2)$, and let
$T_m$ be defined by
\Be\label{defofTm}
\widehat {T_m f}(\xi)  =m(|\xi|) \widehat f(\xi)\,.
\Ee
The work of Carbery, Gasper and Trebels \cite{cgt} yields an analogue of 
\eqref{steinptw} for radial multipliers in which the $g_\la^*$-function is  replaced  with a robust 
version  of $G_\alpha$ which has the same $L^p$ boundedness properties as $G_\alpha$. A variant   of their 
argument,
given  by Carbery in 
\cite{caesc},  shows that one can work with $G_\alpha$ itself and so 
there is a pointwise estimate
\Be\label{cgtpointwise}
g[T_mf](x)  \le C\sup_{t>0}
\|\varphi_\circ m(t\cdot)\|_{L^2_\alpha(\bbR)}\, G_\alpha f(x)\,
\Ee
where again $g$ is a suitable standard Littlewood--Paley function.
$L^p$  mapping properties 
of $G_\alpha$ together with \eqref{cgtpointwise} have been used 
to prove essentially sharp 
 estimates for radial convolution operators, with multipliers in 
localized Sobolev spaces. However it was not evident whether 
\eqref{cgtpointwise} could also be used to capture endpoint results, for radial 
multipliers in the same family of spaces. We shall address this point  in 
\S\ref{endptsect} below.

Carbery \cite{caesc} 
also obtained a related pointwise inequality  for maximal functions, 
\Be\label{maximaloperatorineq}
\sup_{t>0} |T_{m(t\cdot)}f(x)| \le C \|m\circ \exp\|_{L^2_\alpha(\bbR)}\,\, G_\alpha f(x)\,,
\Ee
which for $p\ge 2$ yields  effective $L^p$ bounds for maximal 
operators 
generated by radial Fourier multipliers from such  bounds for $G_\alpha$;  
see also Dappa and Trebels \cite{dt} for similar  results. 
Only little is currently known 
about  maximal operators for radial Fourier multipliers in the range $p<2$;
 \cf. Tao's work
\cite{taowt}, \cite{taomax} for examples and for partial results in two dimensions.

\subsubsection*{ $L^p$-bounds for $G_\alpha$}\label{Lpboundssect}
We now discuss necessary conditions and sufficient conditions on $p\in (1,\infty)$ for the validity of the  inequality
\begin{equation}
\label{Galphaineq}
\|G_\alpha f\|_p\lc \|f\|_p\,;\end{equation}
here the notation $A\lc B$ is used for $A\le CB$ with an unspecified constant.
By \eqref{cgtpointwise} it is necessary for
\eqref{Galphaineq}
that $\alpha>1/2$
 since for   $L^2_{\alpha}(\bbR)$
to be imbedded in $L^\infty$ we need $\alpha>1/2$. For $1<p<2$ the inequality can only hold if
$\alpha>\tilde \alpha(p)=d(\frac 1p-\frac 12)+\frac 12$. This is seen by
writing
\Be\label{Galpha}
G_\alpha f=  \Big(\int_0^\infty| K_t^\alpha* f|^2 \frac{dt}{t}\Big)^{1/2} \ \text{ where }\
\widehat {K_t^\alpha}(\xi) = \alpha\frac
{|\xi|^2}{t^2}\Big(1-\frac{|\xi|^2}{t^2}\Big)_+^{\alpha-1}.
\Ee
Then, for a  suitable Schwartz function  $ \eta$, with $\widehat \eta$  vanishing near $0$ and compactly supported in a
narrow  cone,  and for  $t\sim 1$ and large $x$ in an open cone,  we have
\Be \label{asympt}
 K_t^\alpha*\eta (x) =  c_\alpha t^d e^{it|x|} |tx|^{-\frac{d-1}{2}-\alpha} + E_t(x)\Ee
where $E_t$ are lower order error terms. This leads to
$$\Big(\int_1^2|K_t^\alpha*\eta|^2dt\Big)^{1/2} \in L^p(\bbR^d) \quad \implies
\alpha> \tilde \alpha(p)\,.
$$
Note that the oscillation for large $x$ in \eqref{asympt} plays no role here.

Concerning positive results for $p\le 2$, the  $L^2$-bound  for $\alpha>1/2$ follows from Plancherel and
was already observed in \cite{stein58}. The case
 $1<p\le 2$, $\alpha>\frac{d+1}2$ is covered by the Calder\'on--Zygmund theory for vector-valued singular integrals, and analytic interpolation yields $L^p$-boundedness for $1<p<2$, $\alpha> \tilde \alpha(p)$,
see \cite{su1}, \cite{igku}.
There is also an endpoint result for $\alpha=\tilde \alpha(p)$,
indeed one can use  the
arguments by Fefferman \cite{feff} for  the weak type endpoint inequalities
for  Stein's $g_\lambda^*$ function  to prove that $G_{\tilde\alpha(p)}$ is of weak type $(p,p)$ for $1<p<2$ (Henry Dappa, personal communication,
see also Sunouchi~\cite{su2} for the case  $d=1$).

The   range $2<p<\infty$ is more interesting, since now the oscillation of the kernel $K^\alpha_t$ plays a significant  role, and,
in  dimensions $d\ge 2$,
 the  problem is   closely related to  the Fourier restriction and
Bochner--Riesz problems.
A necessary condition
for $p>2$ can be obtained by duality.
Inequality \eqref{Galphaineq} for $p>2$
implies that
for all $b\in L^2([1,2])$ and $\eta$ as above
\Be\label{necessityb}
\Big\|\int_1^2 b(t)
K_t^\alpha*\eta \,dt  \Big\|_{p'}\lc \Big(\int_{[1,2]}|b(t)|^2 dt\Big)^{1/2}\,.
\Ee
If we again split $K_t^\alpha$ as in \eqref{asympt}, and prove suitable  upper bounds for the expression involving  the error terms then we see that, for  $R\gg 1$,
$$\int_{|x|\ge R} \Big|\frac{ \widehat b(|x|) }{|x|^{\frac{d-1}{2}+\alpha}}
\Big|^{p'} dx <
\infty,$$
which leads to the necessary condition  $\alpha> d(\frac 1{p'}-\frac 12)=d(\frac 12-\frac 1{p})$.

It is conjectured that   \eqref{Galphaineq}
holds for $2<p<\infty$ if and only if $\alpha>\alpha(p)=\max\{d(\frac 12-\frac 1p),\,\frac 12\}$.
For $d=1$ this can be shown in several ways, and the estimate follows from Calder\'on--Zygmund theory
(one such proof is in  \cite{su2}).
The full
 conjecture for $d=2$ was proved by Carbery \cite{carbery}, and
a variable coefficient generalization of his  result was later obtained in  \cite{mss2}.
The partial  result  for $p>\frac {2d+2}{d-1}$ which relies
on the  Stein--Tomas restriction  theorem  is  in Christ \cite{christ} and in
 \cite{seegerthesis}.
A better range (unifying the cases $d=2$ and $d\ge 3$) was recently obtained by the authors  \cite{lrs}; that is, 
inequality  \eqref{Galphaineq}  holds for $\alpha>d(1/2-1/p)$ and $d\ge 2$ in the range
$2+ 4/d<p<\infty$.
This  extends previous results
on  Bochner--Riesz means by the first  author \cite{lee} and relies on Tao's bilinear adjoint restriction theorem \cite{tao-bilinear}. 
Motivated by a still open problem of Stein \cite{st-williamstown}, the authors also 
proved a related weighted inequality  in \cite{lrs}, 
 namely for $d\ge 2$,   $1\le q<\frac{d+2}2$,
\[\int [G_\alpha f(x)]^2 w(x)\, dx \lc
\int |f(x)|^2 \fW_q w(x)\, dx, \quad \alpha>\frac{d}{2q} \,,
\]
where $\fW_q$ is an explicitly defined operator which is of weak type $(q,q)$ 
and  bounded on  $L^r$ with $q<r\le \infty$.
This is an analogue of a result by Carbery and the third author in two 
dimensions \cite{case} and extends a weighted inequality by Christ \cite{christ} in higher dimensions.
One might expect that recent progress by  Bourgain and 
Guth \cite{bogu} on the Bochner--Riesz problem will  
lead to further improvements in the ranges of these results   but this is currently open.

By the
equivalence
\eqref{ptwequiv}
one can interpret the boundedness of  $G_\alpha$  as a regularity result for 
spherical means and then for solutions of the wave equation. By 
a  somewhat finer analysis
in conjunction with the use of the Fefferman--Stein \#-function
\cite{fs-hardy}
the authors obtained 
 an  $L^p(L^2)$
endpoint result, local in time, in fact  not just for the wave
equation, but also for other dispersive  equations.  Namely 
if $\gamma>0$,
$d\ge 2$, $2+ 4/d <p<\infty$  then 
\Be\label{LpL2endpt}
\Big\|
\Big(\int_{-1}^1\big|  e^{it(-\Delta)^{\gamma/2}} \!f\big|^2 dt\Big)^{1/2}
\Big\|_p
\lc \|f\|_{B^p_{s,p}},  \quad
\frac{s}{\gamma}= d\Big(\frac 12-\frac 1p\Big)-\frac 12\,.
\Ee
Here  $B^p_{s,p}$ is the Besov space which strictly  contains the Sobolev space  $L^p_s$ for  $p>2$.

Concerning endpoint estimates, many  such  results for 
Bochner--Riesz multipliers and  variants 
had been  previously known (\cf. \cite{christ-rough}, \cite{christ-wtBR},
\cite{christ-sogge-inv}, \cite{se-ind}, \cite{se1}, \cite{taothesis}).
For the  Bochner--Riesz means $R^{\la}_t$ with  the critical exponent 
$\la(p)= d(1/2-1/p)-1/2$,
 Tao  \cite{taowt} showed  that if for 
some  $p_1>2d/(d-1)$ the    $L^{p_1}$ boundedness holds  for all 
$\la>\la(p_1)$,
then one also has  a bound in the limiting case, for $p_1<p<\infty$, 
namely 
  $R_t^{\la(p)}$ maps  $L^{p,1}$ to $L^p$, and $L^{p'}\to L^{p',\infty}$.
In contrast no positive  result for 
$G_{d/2-d/p}$ 
seems to have been  known, even for the  version with dilations restricted to $(1/2,2)$. 
It should be  emphasized  that, despite the pointwise equivalence 
of the two square functions  in \eqref{ptwequiv},
the sharp  regularity  result 
\eqref{LpL2endpt}
does not imply a corresponding endpoint bound for $G_{d/2-d/p}$
(in fact the latter is not bounded on $L^p$). 
 In this paper
we will prove  a sharp result  for $G_{d/2-d/p}$
in the restricted open range of the Stein--Tomas adjoint restriction theorem, and obtain 
 related results for 
maximal operators and  Fourier multipliers. 

\section{Endpoint results}\label{endptsect}

\begin{theorem}\label{sqthm} Let $d\ge 2$,
$\frac{2(d+1)}{d-1}<p<\infty$ and $\alpha= d(\frac 12-\frac 1p)$.
 Then
\Be\label{LpineqGalpha}\|G_\alpha f\|_p\le C\|f\|_{L^{p,2}}\, .\Ee
\end{theorem}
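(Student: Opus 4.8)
The strategy is to reduce to sharp single-scale estimates by a Littlewood--Paley decomposition in the distance of $|\xi|$ to the sphere $|\xi|=t$, to prove two single-annulus bounds --- one on $L^2$, with an exponential gain in the scale coming from Plancherel, and one on $L^p$, with no loss, coming from the Stein--Tomas adjoint restriction theorem --- and then to reach the Lorentz endpoint by an interpolation/extrapolation (``change of summation'') argument of the type Tao used for Bochner--Riesz means \cite{taowt}. Two numerical facts drive the proof: in this range $\alpha=d(\tfrac12-\tfrac1p)$ exceeds $\tfrac12$, which forces the $L^2$ gain, and the square-function (Hilbert-space) structure of $G_\alpha$ is what yields the endpoint second index $2$ rather than the $1$ one would obtain for a scalar operator.

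Concretely, using the representation \eqref{Galpha}, fix $\zeta\in C^\infty_0$ with $\sum_{j\ge0}\zeta(2^j s)=1$ on $(0,1]$, put $\widehat{K^j_t}(\xi)=\widehat{K^\alpha_t}(\xi)\,\zeta\big(2^j(1-|\xi|^2/t^2)\big)$ for $j\ge1$ and let $K^0_t$ carry the remaining non-singular part of $K^\alpha_t$, and set $G^j f=\big(\int_0^\infty|K^j_t*f|^2\,dt/t\big)^{1/2}$, so that $G_\alpha f\le\sum_{j\ge0}G^j f$. Since $\int_0^\infty|\widehat{K^j_t}(\xi)|^2\,dt/t\lc 2^{-(2\alpha-1)j}$ uniformly in $\xi$, Plancherel gives $\|G^j f\|_2\lc 2^{-(\alpha-1/2)j}\|f\|_2$, and here $\alpha-\tfrac12=d(\tfrac12-\tfrac1p)-\tfrac12>0$ precisely because $p>\frac{2(d+1)}{d-1}>\frac{2d}{d-1}$. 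For the $L^p$ bound one rescales to $t\sim1$, decomposes the $2^{-j}$-thick annulus into $\sim2^{j(d-1)/2}$ caps of dimensions $2^{-j/2}\times\cdots\times2^{-j/2}\times2^{-j}$ ($d-1$ tangential sides and one radial side), applies the Stein--Tomas estimate on each affinely normalised cap, and sums using $L^2$-orthogonality of the caps and in $t$; as $\frac{2(d+1)}{d-1}<p$ lies in the \emph{open} Stein--Tomas range this gives $\|G^j f\|_p\lc\|f\|_p$, with no power of $1+j$ (the factor $2^{j(d(1/2-1/p)-\alpha)}$ cancels at $\alpha=d(\tfrac12-\tfrac1p)$; this is the per-scale version of the estimates of Christ \cite{christ} and \cite{seegerthesis}). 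The term $G^0$ is an ordinary Littlewood--Paley function, bounded on $L^p$, and $\|G^0 f\|_p\lc\|f\|_p\lc\|f\|_{L^{p,2}}$ since $L^{p,2}\hookrightarrow L^p$ for $p>2$.

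Interpolating these two single-scale bounds yields $\|G^j f\|_q\lc 2^{-\delta(q)j}\|f\|_q$ for $2\le q<p$ with $\delta(q)\approx c_p(p-q)>0$; summing over $j$ shows $G_\alpha$ is bounded on $L^q$ for every $q<p$, with operator norm $O\big((p-q)^{-1}\big)$ as $q\uparrow p$. To upgrade this to the endpoint one decomposes $f$ according to the size of $|f|$ --- equivalently, one tests against $L^{p,2}$-atoms --- and, for each dyadic height, splits $\sum_j G^j f$ at a scale depending on that height, estimating the high scales by the exponentially decaying $L^q$-bounds and the low scales by the loss-free $L^p$-bound. Because $(G_\alpha f)^2$ is an average over $t$ of a non-negative quantity, this bookkeeping is performed after squaring and using the triangle inequality in $L^{p/2}$; it is exactly this additional Hilbert-space averaging in $t$ --- which the scalar Bochner--Riesz operator lacks, and for which the analogous scheme produces only $L^{p,1}\to L^p$ --- that converts the resulting $\ell^1$-sum over dyadic heights into an $\ell^2$-sum, producing $\|f\|_{L^{p,2}}$ on the right-hand side.

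The main obstacle is precisely this passage to the Lorentz endpoint. It cannot be reached by a diagonal complex interpolation between the $L^2$-bound and a sub-endpoint $L^{p_0}\to L^{p_0}$ bound ($p_0<p$): the scaling $G_\alpha(f(\lambda\,\cdot))(x)=(G_\alpha f)(\lambda x)$ forces every $L^r\to L^r$ bound to hold only for $r<p$, so one is genuinely working at the phase transition at which $G_{d/2-d/p}$ fails on $L^p$, and the $L^2(dt/t)$-structure has to be carried through the whole extrapolation --- in particular through the slowly decaying, non-integrable tails of the kernels $K^\alpha_t$, which are the very reason for the failure on $L^p$. A possible alternative to the extrapolation step is to use the weighted inequality $\int(G_\alpha f)^2 w\lc\int|f|^2\,\fW_q w$ from \cite{lrs} (available, in this range, for $q$ slightly above $(p/2)'$) by duality, $\|G_\alpha f\|_p^2=\|(G_\alpha f)^2\|_{p/2}\lc\sup_{\|w\|_{(p/2)'}=1}\int|f|^2\,\fW_q w\lc\big\|\,|f|^2\big\|_{L^{p/2,1}}\,\sup_w\|\fW_q w\|_{L^{(p/2)',\infty}}$, together with $\big\|\,|f|^2\big\|_{L^{p/2,1}}=\|f\|_{L^{p,2}}^2$; but this again needs a careful endpoint-sensitive bound for $\fW_q$ on $L^{(p/2)'}$ and hence a further real-interpolation argument.
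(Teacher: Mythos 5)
Your single-scale estimates are correct and standard: the $L^2$ bound $\|G^jf\|_2\lc 2^{-j(\alpha-1/2)}\|f\|_2$ with $\alpha-\tfrac12>0$ in this range, and the loss-free $L^p$ bound $\|G^jf\|_p\lc\|f\|_p$ at $\alpha=d(\tfrac12-\tfrac1p)$ for $p$ in the open Stein--Tomas range, are exactly the per-scale versions of \cite{christ}, \cite{seegerthesis}, and interpolating and summing recovers the known non-endpoint result. The gap is in the only step that is actually new, the passage to $L^{p,2}\to L^p$. You assert that decomposing $f=\sum_m f_m$ by height and "squaring and using the triangle inequality in $L^{p/2}$" converts the $\ell^1$-sum over heights into an $\ell^2$-sum because of the Hilbert-space averaging in $t$. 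This does not work as stated: $\|(\sum_m G_\alpha f_m)^2\|_{p/2}\le\sum_{m,m'}\|G_\alpha f_m\,G_\alpha f_{m'}\|_{p/2}\le(\sum_m\|G_\alpha f_m\|_p)^2$, which is still the $\ell^1$ sum, and there is no orthogonality in $L^2(dt/t)$ between $t\mapsto K_t*f_m(x)$ for different heights $m$ (the $f_m$ are separated in amplitude, not in frequency or in the $t$-variable). A Tao-type extrapolation applied to the subadditive operator $G_\alpha$ would therefore yield only $L^{p,1}\to L^p$, which is strictly weaker than the claim. You have correctly identified the endpoint passage as the main obstacle, but you have not supplied an argument for it, and the mechanism you propose for the exponent $2$ is the wrong one.

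In the paper the exponent $2$ in $L^{p,2}$ does not come from an $\ell^2$ decomposition of $|f|$ by height at all. The proof dualizes, writes the kernel as a superposition $\int\Jscr_\alpha(sr)\,\psi*\sigma_r\,dr$ of spherical measures, and decomposes in the \emph{spatial decay scales} $r\in I_j=[2^j,2^{j+1}]$ of the kernel rather than in the distance of $|\xi|$ to the sphere. The key inequality (Proposition \ref{main2}) is an $L^p(\bbZ^d\times\bbN,\mu_d)\to L^p$ bound proved by a restricted weak-type argument combining the Stein--Tomas bound for each shell (Lemma \ref{STlemma}) with spatial almost-orthogonality of widely separated shells (Lemma \ref{orthlemma}); real interpolation over the weighted counting measure in $j$ (Lemma \ref{weightedinterpol}, with $q=2$) then produces the Lorentz exponent $2$ as an $\ell^2_j$ index over the kernel's dyadic annuli. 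That $q=2$ is admissible is precisely the Plancherel identity \eqref{betaestfizedz}, which converts the $L^2(ds)$ norm of the Hilbert-space-valued atom into an $\ell^2_j$ sum of its annular contributions --- this is where the square-function structure genuinely enters, paired with the kernel rather than with a height decomposition of $f$. Finally the global sum over dyadic $t$-blocks (which your sketch does not address at the endpoint) is handled by a Chang--Fefferman atomic decomposition (Theorem \ref{atomic}). Your alternative route via the weighted inequality of \cite{lrs} indeed fails for the reason you suspect: one would need $\fW_q$ bounded on $L^{(p/2)'}$ with $q>(p/2)'$, whereas $\fW_q$ is only bounded on $L^r$ for $r>q$, so that approach loses the endpoint as well.
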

Here 
$L^{p,q}$ denotes the Lorentz space. 
We note that the  $L^p\to L^p$ boundedness fails; moreover  $L^{p,2}$ cannot be replaced by a larger space $L^{p,\nu}$ for $\nu>2$.
This can  be shown  by 
the argument in \eqref{necessityb}
namely, if $b\in L^2([1,2])$ then  the function $\widehat b(|\cdot|)
(1+|\cdot|)^{-\frac d{p'}+\frac 12}$  belongs to  $L^{p',2}$
but not necessarily to $L^{p',r}$ for $r<2$.
The space $L^{p,2}$ has occured earlier  in endpoint results 
related to other  square functions, see
 \cite{se-archiv}, \cite{setao}, \cite{taowright}.

The   pointwise bound  \eqref{maximaloperatorineq} and 
Theorem \ref{sqthm} 
yield a new bound for maximal functions, in particular for multipliers in the Sobolev space $L^2_{d/2-d/p}$ 
which are compactly supported away from the origin. 
This  Sobolev condition is too restrictive 
to give any endpoint bound for the maximal Bochner--Riesz operator. However such a result can be 
deduced from a related   result 
 on maximal functions
$$M_m f(x)= \sup_{t>0} |\cF^{-1}[ m(t|\cdot|) \widehat f\,](x)|
$$ with $m$  compactly supported away from the origin.
Our  assumptions 
involve the  Besov space 
$B^2_{\alpha,q}$ (which is 
$L^2_\alpha$ when $q=2$) and thus 
the following result  seems to be beyond the scope of a square function 
estimate  when $q\neq 2$.

\begin{theorem}\label{maxmult}
Let $d\ge 2$, $\frac{2(d+1)}{d-1}<p<\infty$, 
$\alpha=d(\frac 12-\frac 1p)$ and 
 $p'\le q\le \infty$. 
Assume that  $m$ is  supported 
in $(1/2,2)$ and that $m$  belongs to the Besov 
space $B^2_{\alpha,q}$.
Then 
$$\|M_m f\|_{L^p} \le C\|m\|_{B^2_{\alpha,q}}\|f\|_{L^{p,q'}}\,.
$$
\end{theorem}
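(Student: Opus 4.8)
The plan is to reduce the maximal operator $M_m$ to the square function $G_\alpha$ of Theorem \ref{sqthm} via a Sobolev-type decomposition of $m$ in its Besov norm, so that the Lorentz-space loss $L^{p,q'}$ on the input is produced by summing dyadic pieces. Write $m = \sum_{j\ge 0} m_j$ where $\widehat{m_j}$ is a Littlewood--Paley decomposition on the Fourier side of $m$ (regarding $m$ as a one-variable function on $\bbR$), so that $\|m\|_{B^2_{\alpha,q}} \approx \big(\sum_j (2^{j\alpha}\|m_j\|_{L^2})^q\big)^{1/q}$. For each $j$, $m_j$ is a Schwartz function whose frequency support lies in $|\tau|\sim 2^j$, hence (after localizing to $(1/2,2)$, which is harmless up to rapidly decaying tails) $m_j$ is smooth at scale $2^{-j}$, and Carbery's pointwise bound \eqref{maximaloperatorineq} gives $\sup_{t>0}|\cF^{-1}[m_j(t|\cdot|)\widehat f](x)| \le C\|m_j\circ\exp\|_{L^2_\alpha(\bbR)} G_\alpha f(x) \lesssim C 2^{j\alpha}\|m_j\|_{L^2} G_\alpha f(x)$, since on functions localized at frequency $2^j$ the Sobolev norm $\|m_j\circ\exp\|_{L^2_\alpha}$ is comparable to $2^{j\alpha}\|m_j\|_{L^2}$.

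A naive summation of these estimates would cost a factor $\sum_j 1$, so the crux is to gain summability in $j$. Here I would exploit that $M_{m_j}$ has much better behavior on $L^\infty$ (or on a Lorentz space close to $L^\infty$) than the crude bound suggests: because $m_j$ oscillates at scale $2^{-j}$ in a single radial variable, the kernel of $t\mapsto \cF^{-1}[m_j(t|\cdot|)\,\widehat{\cdot}\,]$ has an $L^1$ bound that grows only polynomially in $2^j$, while on $L^2$ one has an essentially $j$-independent bound $\|M_{m_j}f\|_2 \lesssim 2^{j\alpha}\|m_j\|_2\|f\|_2$ by Plancherel together with the square-function trick $M_{m_j}f \le |\cF^{-1}[m_j(t_0|\cdot|)\widehat f]| + \big(\int |\partial_t \cF^{-1}[m_j(t|\cdot|)\widehat f]|^2 t\,dt\big)^{1/2}$ and the $L^2$-bound for $G_\alpha$. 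Interpolating the $L^2$ bound (with constant $\sim 2^{j\alpha}\|m_j\|_2$) against the $L^p$ bound coming from \eqref{maximaloperatorineq} and Theorem \ref{sqthm} (also with constant $\sim 2^{j\alpha}\|m_j\|_2$), but now organized as a bound from a Lorentz space, one obtains for each $j$ a bound $\|M_{m_j}f\|_{L^p} \lesssim 2^{j\alpha}\|m_j\|_2 \cdot 2^{-j\epsilon(p)} \|f\|_{L^{p,q'}}$ with a genuine gain $\epsilon(p)>0$ on a suitable subspace — the gain being exactly what one extracts by pairing $m_j$ against a single fixed input function and measuring the latter in a Lorentz norm rather than in $L^p$. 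Summing in $j$ and applying Hölder's inequality in $j$ (with exponents $q$ and $q'$) then yields $\|M_m f\|_{L^p} \lesssim \big(\sum_j (2^{j\alpha}\|m_j\|_2)^q\big)^{1/q}\|f\|_{L^{p,q'}} = C\|m\|_{B^2_{\alpha,q}}\|f\|_{L^{p,q'}}$.

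The step I expect to be the main obstacle is making the last interpolation precise \emph{at the level of Lorentz spaces}, i.e.\ producing the $j$-dependent gain with the correct input space $L^{p,q'}$ appearing uniformly in $j$. The difficulty is that $M_{m_j}$ is sublinear, not linear, so one cannot directly invoke bilinear/real interpolation of a family of operators; the standard remedy is to linearize the supremum by choosing a measurable $t=t(x)$, obtaining genuinely linear operators $T_j^{t(\cdot)}$, proving the two endpoint bounds (an $L^2\to L^2$ bound and an $L^p\to L^p$, or rather $L^{p,1}\to L^p$, bound from Theorem \ref{sqthm}) with constants $C_j = 2^{j\alpha}\|m_j\|_2$ and $C_j' = 2^{j\alpha}\|m_j\|_2 \cdot 2^{j\delta}$ for a modest loss $\delta$, and then invoking the Bourgain-type interpolation/summation trick (as used by Tao \cite{taowt} and by Carbery--Seeger--Wainger--Wright type arguments, \cf. \cite{se-archiv}, \cite{setao}) that converts a logarithmic divergence into a bound from $L^{p,q'}$. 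One must check that the exponents match: the condition $q\ge p'$ is precisely what guarantees $q'\le p$ so that the summation exponent in the Lorentz interpolation is admissible, and the endpoint $q=\infty$ (where $B^2_{\alpha,\infty}\supsetneq L^2_\alpha$ and $L^{p,1}$ is the input) is the sharpest case, recovered directly from Theorem \ref{sqthm} together with \eqref{maximaloperatorineq} without any summation. Verifying uniformity of all constants in $j$, and that the tails from localizing $m_j$ to $(1/2,2)$ are negligible, are the remaining technical points.
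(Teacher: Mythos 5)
Your reduction to the square function cannot deliver the theorem for $q\neq 2$, and the paper flags this just before the statement: the Besov hypothesis with $q\neq 2$ ``seems to be beyond the scope of a square function estimate.'' The concrete gap is the claimed single-piece bound $\|M_{m_j}f\|_{L^p}\lc 2^{j\alpha}\|m_j\|_2\,2^{-j\eps(p)}\|f\|_{L^{p,q'}}$ with a genuine gain $\eps(p)>0$: no interpolation of available estimates produces such a gain. The critical regularity for $M_{m_j}$ on $L^s$, $s\ge 2$, is $\max\{d(\tfrac12-\tfrac1s),\tfrac12\}$; on the range $s>\tfrac{2d}{d-1}$ this is affine in $1/s$, so interpolating two estimates at exponents $p_1<p<p_0$ lying on the critical line reproduces exactly the constant $2^{j\alpha(p)}\|m_j\|_2$ at $p$ --- no gain (and indeed you interpolate two bounds both normalized to $2^{j\alpha}\|m_j\|_2$, which can only return $2^{j\alpha}\|m_j\|_2$). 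Bringing in the $L^2$ endpoint makes things worse: the best $L^2$ maximal bound for a piece oscillating at scale $2^{-j}$ costs $\approx 2^{j/2}\|m_j\|_2$ (the requirement $\alpha>1/2$ at $p=2$ is genuine), and since the chord from $(\tfrac12,\tfrac12)$ to $(\tfrac1{p_0},\alpha(p_0))$ lies strictly above the critical line at $\tfrac1p$, the interpolated constant is $2^{j\alpha(p)}\cdot 2^{j(1-\theta)/2}$, a loss. The only mechanism that converts the gain available at some $p_1<p$ into information at $p$ is the Bourgain summation trick you cite, and that yields $\sum_j M_{m_j}\colon L^{p,1}\to L^{p,\infty}$, i.e.\ restricted weak type --- not the strong conclusion $L^{p,q'}\to L^{p}$ asserted in Theorem \ref{maxmult}. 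Relatedly, your claim that the case $q=\infty$ is ``recovered directly from Theorem \ref{sqthm} together with \eqref{maximaloperatorineq} without any summation'' is false: \eqref{maximaloperatorineq} requires $m\in L^2_\alpha=B^2_{\alpha,2}$, whereas the whole point of $q=\infty$ (needed for Corollary \ref{rieszthm}) is to treat $m_\la\in B^2_{\la+1/2,\infty}\setminus L^2_{\la+1/2}$; this is the hardest case, not a free one.

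For comparison, the paper's proof does not pass through $G_\alpha$ at all. It decomposes the radial kernel $\kappa(|x|)=\cF^{-1}[m(|\cdot|)](x)$ over dyadic annuli $|x|\sim 2^j$ (the physical-space counterpart of your frequency decomposition of $m$, via \eqref{kappabound}), writes $T_{m(t\cdot)}f=\int \kappa(r)t^{1-d}\sigma_{rt}*\phi*f\,dr$, and proves the dual single-scale estimate of Proposition \ref{dualmax} from Corollary \ref{mainlor2}. The strong, $\ell^q$-summed Lorentz bound there comes from Proposition \ref{main2}, whose proof is a restricted-weak-type argument combining the Stein--Tomas theorem with an orthogonality lemma for spherical measures and a counting/exceptional-set decomposition of the atoms' supports, upgraded by real interpolation (Lemma \ref{weightedinterpol}). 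The dyadic dilation scales are then recombined by duality and Lemma \ref{Lpellp}, which is exactly where $q\ge p'$ enters. Some substitute for that counting argument is needed; the square function alone will not get you past $L^{p,\infty}$.
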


We apply this to the Bochner--Riesz maximal operator $R^\la_*$ defined by
$$R^\la_* f(x)= \sup_{t>0}|R^\la_t f(x)|.$$
Split
$(1-t^2)_+^{\la}= u_\la(t)+ m_\la(t)$ where $m_\la$ is supported in $(1/2,2)$ and 
$u_\la\in C^\infty_0(\bbR)$. Then the maximal function  $M_{u_\la}f$ is pointwise
controlled dominated by the Hardy--Littlewood maximal function and thus 
bounded on $L^p$ for all $p>1$. The function 
$m_\la$ belongs to the Besov space $B^2_{\la+1/2, \infty}$ and   Theorem \ref{maxmult} with $q=\infty$ 
yields a  maximal version 
 of (the dual of) Christ's endpoint estimate in \cite{christ-wtBR}.

\begin{corollary} \label{rieszthm}
 Let $d\ge 2$, $\frac{2(d+1)}{d-1}<p<\infty$,  and 
$\la= d(\frac 12-\frac 1p)-\frac 12$. Then 
$$\|R^{\la}_*f\|_p\le C \|f\|_{L^{p,1}}\,.$$
\end{corollary}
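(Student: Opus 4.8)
The plan is to split the Bochner--Riesz multiplier into a part to which Theorem~\ref{maxmult} applies and a harmless remainder. Since $R^\lambda_t f = \cF^{-1}[\,(1-|\cdot|^2/t^2)^\lambda_+\,\widehat f\,]$, we have $R^\lambda_* f = M_m f$ with $m(s)=(1-s^2)^\lambda_+$ (the supremum over $t>0$ is the same as over $1/t>0$). Fix $\chi\in C^\infty_0(\bbR)$ with $\chi\equiv1$ on a neighbourhood of $[-1/2,1/2]$ and $\supp\chi\subset(-4/5,4/5)$, and put $u_\lambda=\chi m$, $m_\lambda=(1-\chi)m$, so that $m=u_\lambda+m_\lambda$, the function $u_\lambda$ is smooth and compactly supported (it only sees the region where $m$ is smooth), and $m_\lambda$ is supported in a compact subinterval of $(1/2,2)$, where it carries the $(1-s)^\lambda_+$-type singularity of $m$ at $s=1$. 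By linearity and subadditivity of the supremum, $R^\lambda_* f\le M_{u_\lambda} f + M_{m_\lambda} f$ pointwise, and it suffices to bound each term by $C\|f\|_{L^{p,1}}$.

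For $M_{u_\lambda}$: the radial function $u_\lambda(|\cdot|)$ lies in $C^\infty_0(\bbR^d)$, hence $\Phi:=\cF^{-1}[u_\lambda(|\cdot|)]$ is a Schwartz function, and $\cF^{-1}[u_\lambda(t|\cdot|)\widehat f\,]=\Phi_t * f$ with $\Phi_t(y)=t^{-d}\Phi(y/t)$. From $|\Phi(y)|\lc(1+|y|)^{-d-1}$ one gets the familiar pointwise bound $M_{u_\lambda} f\lc\cM f$ by the Hardy--Littlewood maximal operator $\cM$, whence $\|M_{u_\lambda} f\|_p\lc\|f\|_p\lc\|f\|_{L^{p,1}}$ for $p>1$, using the inclusion $L^{p,1}\hookrightarrow L^p$.

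For $M_{m_\lambda}$: one checks that $m_\lambda\in B^2_{\alpha,\infty}(\bbR)$ with $\alpha:=\lambda+\tfrac12=d(\tfrac12-\tfrac1p)$. Writing $(1-s^2)^\lambda_+=(1-s)^\lambda_+\,\omega(s)$ near $s=1$ with $\omega$ smooth and nonvanishing, $m_\lambda$ is, modulo smooth compactly supported factors, a translate of the model function $(1-s)^\lambda_+$; in the range $p>2(d+1)/(d-1)$ one has $\lambda>\tfrac{d-1}{2(d+1)}>0$, so there is no integrability obstruction, and the classical estimate $\|\Delta_j m_\lambda\|_{L^2}\approx 2^{-j(\lambda+1/2)}$ for the Littlewood--Paley pieces $\Delta_j$ shows precisely $m_\lambda\in B^2_{\lambda+1/2,\infty}$. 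Theorem~\ref{maxmult} applied with $q=\infty$ — for which $p'\le q$ holds trivially and $q'=1$ — gives $\|M_{m_\lambda} f\|_p\lc\|m_\lambda\|_{B^2_{\alpha,\infty}}\|f\|_{L^{p,1}}$. Adding the two bounds completes the argument.

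The corollary is essentially a repackaging of Theorem~\ref{maxmult}, so the only step with genuine content is the verification that the truncated Bochner--Riesz multiplier lies in $B^2_{\lambda+1/2,\infty}$, a classical fact about the critical $L^2$-Besov regularity of algebraic singularities; everything else is bookkeeping, and I do not expect a substantial obstacle here. It is exactly the appearance of $q=\infty$ there that forces the Lorentz exponent $L^{p,1}$ (rather than $L^p$) on the right-hand side, in agreement with the known optimal behaviour of the dual of Christ's endpoint estimate.
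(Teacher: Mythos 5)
Your proposal is correct and coincides with the paper's own argument: the paper deduces the corollary in the paragraph immediately preceding its statement by the very same splitting $(1-t^2)_+^{\la}=u_\la+m_\la$, controlling $M_{u_\la}$ by the Hardy--Littlewood maximal function and applying Theorem \ref{maxmult} with $q=\infty$ to $m_\la\in B^2_{\la+1/2,\infty}$. Your extra details (the Littlewood--Paley verification of the Besov membership and the positivity of $\la$ in the stated range) are consistent with what the paper leaves implicit.
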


\medskip

We now consider operators $T_m$ with   radial Fourier multipliers,  
as defined in \eqref{defofTm}, which do not necessarily decay at $\infty$.
The pointwise bounds \eqref{cgtpointwise},  Theorem \ref{sqthm} and duality 
yield optimal 
$L^p\to L^{p,2}$  estimates in the range 
$1<p<\frac{2(d+1)}{d+3}$,
for H\"ormander type 
multipliers with localized $L^2_\alpha$ conditions in the critical  case
$\alpha=d(\frac 1p-\frac 12)$.
This  
demonstrates the effectiveness of Stein's point of view 
in~\eqref{steinptw} and \eqref{cgtpointwise}.

The following more general theorem  is again 
beyond the scope of a square function estimate.
We   use dilation invariant assumptions 
involving  localizations of Besov spaces $B^2_{\alpha,q}$.
We note that in \cite{se-ind} it had been 
left open  whether one could use  endpoint 
Sobolev space or  Besov spaces with $q>1$  in 
 \eqref{besovassumpt} below.


\begin{theorem} \label{fmthm}
Let $d\ge 2$,
$1<p<\frac{2(d+1)}{d+3}$,  $\alpha=d(\frac 1p-\frac 12)$
and  $p\le q\le \infty$. Let 
$\varphi_\circ$ be a nontrivial $C^\infty_0$ function supported in $(1,2)$.
Assume
\Be\label{besovassumpt}
\sup_{t>0}\|\varphi_\circ m(t\,\cdot\,)\|_{B^2_{\alpha,q}} <\infty\,.
\Ee
Then $T_m$ maps $L^p$ to $L^{p,q}$ and $L^{p',q'}$ to $L^{p'}$.
\end{theorem}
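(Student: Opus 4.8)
By duality — $T_m^{\ast}=T_{\bar m}$, the Besov hypothesis is conjugation invariant, and Lorentz spaces are reflexive in the relevant range — it suffices to prove that $T_m$ maps $L^{p',q'}$ to $L^{p'}$. Write $P:=p'$, so that $P>\tfrac{2(d+1)}{d-1}$, $\alpha=d(\tfrac12-\tfrac1P)$ and $1\le q'\le P$. The plan is to decompose the multiplier, establish two single-scale bounds straddling $L^{P}$, and conclude with a summation argument of Bourgain type. Fix a radial Littlewood--Paley partition of unity localizing $\xi$ to annuli $|\xi|\sim 2^{k}$; inside each annulus, rescaled to unit size, decompose the radial variable dyadically. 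This produces $m=\sum_{\ell\ge 0}m^{\ell}$ where, on $|\xi|\sim 2^{k}$, the rescaled $m^{\ell}$ is the $\ell$-th Littlewood--Paley piece of $\varphi_\circ m(2^{k}\,\cdot\,)$; consequently $\sup_{t>0}\|\varphi_\circ m^{\ell}(t\,\cdot\,)\|_{L^2_{s}}\lesssim 2^{\ell(s-\alpha)}\beta_{\ell}$ for every $s\ge 0$, with numbers $\beta_{\ell}\ge 0$ satisfying $\|(\beta_{\ell})_{\ell}\|_{\ell^{q}}\lesssim \sup_{t>0}\|\varphi_\circ m(t\,\cdot\,)\|_{B^2_{\alpha,q}}=:A$. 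Put $S_{\ell}:=T_{m^{\ell}}$, so $T_m=\sum_{\ell\ge 0}S_{\ell}$.

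For the single-scale bounds, fix exponents $\tfrac{2(d+1)}{d-1}<P_-<P<P_+<\infty$ and set $\alpha_\pm:=d(\tfrac12-\tfrac1{P_\pm})$; then $\alpha_-<\alpha<\alpha_+$ and $\alpha_->\tfrac12$, so the pointwise inequality \eqref{cgtpointwise} applies with Sobolev index $\alpha_\pm$ and gives $g[S_{\ell}f]\lesssim 2^{\ell(\alpha_\pm-\alpha)}\beta_{\ell}\,G_{\alpha_\pm}f$. Since $\|g[h]\|_{r}\approx\|h\|_{r}$ and, by Theorem \ref{sqthm}, $\|G_{\alpha_\pm}f\|_{P_\pm}\lesssim\|f\|_{L^{P_\pm,2}}$, we obtain
\[
\|S_{\ell}f\|_{L^{P_-}}\lesssim 2^{-\ell\sigma_{1}}\,\beta_{\ell}\,\|f\|_{L^{P_-,2}},\qquad
\|S_{\ell}f\|_{L^{P_+}}\lesssim 2^{\ell\sigma_{0}}\,\beta_{\ell}\,\|f\|_{L^{P_+,2}},
\]
with $\sigma_{1}=\alpha-\alpha_-=d(\tfrac1{P_-}-\tfrac1P)>0$ and $\sigma_{0}=\alpha_+-\alpha=d(\tfrac1P-\tfrac1{P_+})>0$. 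These exponents are automatically in the ratio that places the crossover of the two estimates at $P$: interpolating at parameter $\theta$ with $\tfrac1{P_\theta}=\tfrac{1-\theta}{P_-}+\tfrac\theta{P_+}$ produces the factor $2^{\ell(\theta\sigma_{0}-(1-\theta)\sigma_{1})}$, and the vanishing condition $\theta\sigma_{0}=(1-\theta)\sigma_{1}$ is precisely $P_\theta=P$.

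It remains to sum. When $A=\sup_{\ell}\beta_{\ell}$ is finite (the case $q=\infty$), feeding the two bounds above into the classical Bourgain summation argument — decompose $f\in L^{P,1}$ into level sets, apply the $L^{P_-}$-decay and the $L^{P_+}$-growth to the resulting characteristic functions, and balance the two with the splitting point in $\ell$ dictated by the measures of the level sets — yields $\|T_mf\|_{L^{P}}=\|\sum_{\ell}S_{\ell}f\|_{L^{P}}\lesssim A\,\|f\|_{L^{P,1}}$. For general $q\in[p,\infty]$ one interpolates, via the bilinear real interpolation theorem applied to $(m,f)\mapsto T_m f$, between this endpoint and the endpoint $q=p$: running the same machinery but tracking $\|(\beta_{\ell})\|_{\ell^{p}}$ in place of the supremum gives $\|T_mf\|_{L^{P}}\lesssim\|(\beta_{\ell})_{\ell}\|_{\ell^{p}}\|f\|_{L^{P}}$. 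Since $(B^2_{\alpha,p},B^2_{\alpha,\infty})_{\theta,q}=B^2_{\alpha,q}$ and $(L^{P},L^{P,1})_{\theta,q'}=L^{P,q'}$ for the value $\theta=1-p/q$ (for which the fine indices match: $\tfrac1{q'}=\theta+\tfrac{1-\theta}{P}$), the interpolation delivers exactly $T_m:L^{P,q'}\to L^{P}$ with norm $\lesssim A$, which is the desired estimate.

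The heart of the matter is the summation step. Already for $q=\infty$ one must check that the single-scale bounds produced by \eqref{cgtpointwise} and Theorem \ref{sqthm} are sharp enough — in particular that their crossover falls precisely at the critical exponent $P$, which is where the sharpness of Theorem \ref{sqthm} is used — and that the $L^{P_\pm,2}$ (rather than $L^{P_\pm}$) domains are harmless, which holds because the Bourgain argument only ever applies the estimates to characteristic functions, for which the Lorentz and Lebesgue norms agree up to constants. For general $q$ the obstacle is the Lorentz-space refinement: one must carry the $\ell^{q}$-norm of $(\beta_{\ell})$ through the argument and extract the sharp domain index $q'$; obtaining the $q=p$ endpoint in plain $L^{P}$, with no Lorentz loss, is the delicate point and is what forces the use of both single-scale bounds rather than merely the critical-index square function estimate. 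A routine but careful point, needed throughout, is the construction of the decomposition $m=\sum_{\ell}m^{\ell}$ so that the Sobolev estimates for $m^{\ell}$ hold uniformly over all dilations $t$.
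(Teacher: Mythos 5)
Your reduction to single--scale bounds straddling $L^{P}$, $P=p'$, is internally consistent (the crossover computation is right, and the $L^{P_\pm,2}$ domains are indeed harmless for a restricted--type argument), but the two summation steps on which everything rests do not deliver what you claim. First, the classical Bourgain summation lemma \cite{bo}, fed the bounds $\|S_\ell\|_{L^{P_-,2}\to L^{P_-}}\lesssim 2^{-\ell\sigma_1}\beta_\ell$ and $\|S_\ell\|_{L^{P_+,2}\to L^{P_+}}\lesssim 2^{\ell\sigma_0}\beta_\ell$ with $\sup_\ell\beta_\ell\le A$, produces only the restricted weak--type conclusion $\|\sum_\ell S_\ell f\|_{L^{P,\infty}}\lesssim A\|f\|_{L^{P,1}}$ at the crossover; it cannot produce the strong target $L^{P}$, and this is not a technicality: upgrading $L^{P,\infty}$ to $L^{P}$ (equivalently, upgrading the dual statement from $L^{p,1}\to L^{p,\infty}$ to $L^{p}\to L^{p,\infty}$) is precisely the hard endpoint content of the theorem, the difference between a trivial consequence of the non-endpoint theory and the results of \cite{christ-wtBR}, \cite{taowt}. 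Second, your ``$q=p$ endpoint'' $\|T_mf\|_{L^{P}}\lesssim\|(\beta_\ell)\|_{\ell^{p}}\|f\|_{L^{P}}$ does not follow by ``running the same machinery'': interpolating the two single--scale bounds at the crossover gives $\|S_\ell\|_{L^{P}\to L^{P}}\lesssim\beta_\ell$, and the triangle inequality then only yields $\ell^{1}$ control of the $\beta_\ell$, which is strictly weaker than $\ell^{p}$ for $p>1$. Obtaining $\ell^{q}$ summation with $q>1$ is exactly the question the paper says was left open in \cite{se-ind}, and the introduction states explicitly that the $q\neq 2$ cases are ``beyond the scope of a square function estimate'' --- which is the scope your argument never leaves. (The bilinear interpolation bookkeeping at the end is fine, but it interpolates between two endpoints neither of which you have established.)

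For comparison, the paper's proof does not use \eqref{cgtpointwise} or Theorem \ref{sqthm} at all for this theorem. It works directly with the kernel: after a Littlewood--Paley reduction, $T_m$ on each frequency annulus is written as $\int\kappa_k(r)\,\psi*\psi*\sigma_r\,dr$, and the required $\ell^{q}$--in--$j$ and strong--$L^{p,q}$ control comes from a spatial orthogonality analysis of convolution with dilated spherical measures (Lemma \ref{orthlemma} and the restricted weak--type combinatorics in Proposition \ref{main2}, via Stein--Tomas), packaged through Proposition \ref{technicalprop} and the Chang--Fefferman type atomic decomposition of Theorem \ref{atomic}. It is that atomic/spatial structure --- not an abstract two--exponent interpolation --- that supplies both the strong Lorentz target and the $\ell^{q}$ dependence on the Besov pieces. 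To repair your argument you would need to replace the Bourgain summation step by something of comparable strength, at which point you would essentially be reconstructing the paper's Sections \ref{convsphmeas} and \ref{combscalessect}.
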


It is not hard to see that the assumption
\eqref{besovassumpt} is independent of the choice of the particular cutoff
$\varphi_\circ$.
The result is sharp as $T_m$ does not map $L^p$ to $L^{p,r}$ for $r<q$.
This can be seen by considering  some test multipliers of Bochner--Riesz
type. Indeed, let $\Phi_1$ be a radial $C^\infty$ function, with $\Phi_1(x)=1$
for $2^{-1/2}\le |x|\le 2^{1/2}$ and supported in $\{1/2< |x|<2\}$
and similarly let $\chi$ be a radial $C^\infty$ function compactly supported away from the origin and so that $\chi(\xi)=1$ in a neighborhood of the unit sphere.
Set (now with $p<2$)
$$m(\xi)=\chi(\xi) \sum_{j=1}^\infty c_j
\int(1-|\xi-\eta|^2)_+^{d(\frac 1p-\frac 12)-\frac 12} 2^{jd}\widehat\Phi_1(2^j\eta) \, d\eta\,.$$
We first remark that if we write
$m(\xi)=m_\circ(|\xi|)$, then $m_\circ\in B^2_{\alpha,q}(\bbR) $ if and only if
$m\in B^2_{\alpha,q}(\bbR^d)$
(here we use that $m_\circ $ is compactly supported  away from the origin).
Now considering the explicit formula for the kernel of Bochner--Riesz means
(\cf. \eqref{besselalpha} below) it is easy to see
 that $m\in B^2_{d/p-d/2,q}(\bbR^d)$ if and only if
$\{c_j\}_{j=1}^\infty $ belongs to $\ell^q$; moreover the necessary condition 
$\cF^{-1} [m]\in L^{p,q}$ is satisfied  if and only if $\{c_j\}$ belongs to~$\ell^q$.
These considerations show  the sharpness of Theorem~\ref{fmthm}  and also the sharpness of Theorem \ref{maxmult}.

For the operator $T_m$ acting on the subspace $L^p_\rad$, consisting of
  radial $L^p$ functions, the  estimate corresponding to
Theorem \ref{fmthm}  has been known to be true in the optimal range $1<p<\frac{2d}{d+1}$. In fact  Garrig\'os and the third author \cite{gs} obtained an actual
characterization of classes of Hankel multipliers which yields, for
$p\le q\le\infty$,
$$\|T_m\|_{L_\rad^{p}\to L^{p,q}} \approx
\sup_{t>0}\big\|\cF^{-1}\big[\phi(|\cdot|) m(t|\cdot|)\big]\big\|_{L^{p,q}(\bbR^d)}\,\text{ if $1<p<\frac{2d}{d+1}$. }$$
This easily implies the $L^p_\rad\to L^{p,q}$ boundedness under assumption  \eqref{besovassumpt}, see \cite{gs}.
Similarly,  if in
Theorem \ref{fmthm} we replace the range $(1, \frac{2d+2}{d+3})$ with  the smaller $p$-range
$(1, \frac{2d-2}{d+1}) $ (applicable only in dimension  $d\ge 4$) the result
follows from the   characterization of radial $L^p$ Fourier multipliers  acting on general $L^p$ functions
 in a recent article by Heo, Nazarov and the third author \cite{hns}.
There it is proved that for $p\le q\le \infty$,
\Be\label{hnsest}
\|T_m\|_{L^{p}\to L^{p,q}} \approx \sup_{t>0}\big\|\cF^{-1}\big[\phi(|\cdot|) m(t|\cdot|)\big]\big\|_{L^{p,q}(\bbR^d)}
\, \text{ if $1<p<\frac{2d-2}{d+1}$}\,.\Ee

The remainder of this paper is devoted to the proofs 
of the above theorems. They are mostly   based on  ideas in \cite{hns}. 
It remains an interesting open problem
 to extend the range of  \eqref{hnsest}, in particular to prove such a result for some $p>1$ 
 in  dimensions two and three. 
Moreover it would be interesting to prove the above theorems 
beyond  the Stein--Tomas range.

\section{Convolution with spherical measures}\label{convsphmeas}
In this section we prove
an inequality for convolutions with spherical measures acting on functions with
a large amount of cancellation.
It  can be used to obtain  results such as Theorems  \ref{fmthm} for
radial multipliers which are compactly supported away from the origin.

To formulate this inequality let $\eta$ be a Schwartz function on $\bbR^d$ and let
$\psi$ be a radial
$C^\infty$ function with compact support in $\{x:|x|\le 1\}$ and
such that $$\widehat \psi(\xi)= u(|\xi|)$$ vanishes of order $10d$ at the origin.
For $j\ge 1$ let $I_j=[2^j, 2^{j+1}]$ and  denote by
$\sigma_r$  the surface
measure on the sphere of radius $r$ which is centered at the origin.
Thus the norm of $\sigma_r$ as a measure is $O(r^{d-1})$. We  recall
 the Bessel function formula
\Be\label{besselfct}\widehat{\sigma}_r(\xi)= r^{d-1} \Jscr(r|\xi|) \text{ with } \Jscr(s)= c(d)  s^{-\frac{d-2}{2}}J_{\frac{d-2}{2}}(s)\,,
\Ee
which implies  $|\widehat{\sigma}_r(\xi)|\lc r^{d-1} (1+r|\xi|)^{-\frac{d-1}{2}}$. In view of the assumed cancellation of $\psi$, we have
\Be\label{Fsigmar}\|\widehat{\psi*\sigma_r}\|_\infty = O(r^{(d-1)/2}).\Ee

In what follows let $\nu$ be a probability measure on $[1,2]$. 
We will need to 
 work with  functions with values in the Hilbert space
$\cH=L^2(\bbR_+,\frac{dr}r)$ and write
$$\|F\|_{L^p(L^1(\cH))}
= \Big\|\int_1^2\Big(\int_0^\infty|F_t(r,\cdot)|^2\frac{dr}{r}\Big)^{1/2}\, d\nu(t)
\Big\|_{p}.$$

\begin{proposition}\label{main2}
Let $1\le p< \frac{2(d+1)}{d+3}$. Then
$$
\Big\|\sum_{j\ge 1} \int_1^2\int_{I_j} \psi*\sigma_{rt}* \eta* F_{t,j}(r,\cdot) \,
dr\,d\nu(t)\Big\|_p \lc\Big(\sum_{j\ge 1} 2^{jd}
\big\|F_{j}\big\|_{L^p(L^1(\cH))}^p
\Big)^{1/p}.
$$
\end{proposition}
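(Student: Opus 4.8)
The plan is to reduce the estimate to a single-scale bound, summed over the dyadic blocks $I_j$, and to exploit the strong decay \eqref{Fsigmar} coming from the $10d$-fold cancellation of $\psi$. First I would normalize: fix $j$ and rescale so that $r \in I_j = [2^j, 2^{j+1}]$ becomes $r \in [1,2]$, absorbing the dilation into $\eta$ and $F$. After this reduction it suffices to prove, uniformly in $j$, an inequality of the shape
\Be\label{singlescale}
\Big\|\int_1^2\!\!\int_1^2 \psi_j * \sigma_{rt} * \eta * G_{t}(r,\cdot)\, dr\, d\nu(t)\Big\|_p \lc 2^{-j d/p'}\, \big\| G \big\|_{L^p(L^1(\cH))},
\Ee
where $\psi_j$ denotes the appropriately rescaled bump, and then sum the $p$-th powers over $j$ (the gain $2^{-jd/p'}$ matched against the loss $2^{jd/p}$ in the right-hand side of the proposition is exactly the book-keeping that makes the $\ell^p$-sum close). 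Here the operator $\eta *$ is harmless — $\eta$ is Schwartz, so convolution with $\eta$ is bounded on every $L^p$ and on the vector-valued spaces in play — and its real role is only to provide the Fourier localization (into a narrow cone / away from the origin) that will be used when invoking the restriction-type estimates.

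The core analytic input is the bound \eqref{Fsigmar}, $\|\widehat{\psi * \sigma_r}\|_\infty = O(r^{(d-1)/2})$, together with the pointwise Bessel asymptotics: $\psi * \sigma_r$ is supported in $|x| \lesssim r$, has size $O(r^{-1})$ away from the sphere $|x| = r$ (in fact with rapid decay at distance $\gg 1$ from it, by the cancellation of $\psi$), and concentrates in an $O(1)$-neighborhood of $\{|x| = r\}$ where its $L^2$ mass is controlled via Plancherel by \eqref{Fsigmar}. I would therefore split the kernel $\psi * \sigma_{rt}$ into a "main" piece living in the unit annulus around the sphere and a rapidly decaying tail; the tail contributes an easily summable error by crude $L^1$–$L^p$ estimates, and for the main piece I apply the local smoothing / Stein--Tomas type square-function machinery. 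Concretely, one wants to view $\int \sigma_{rt}*(\cdot)\, dr$ as an averaging operator over a family of spheres of radii $\sim 2^j$ and use that the adjoint restriction estimate for the sphere holds in the range $p < \frac{2(d+1)}{d+3}$ (equivalently $p' > \frac{2(d+1)}{d-1}$, the Stein--Tomas exponent), which is precisely the hypothesis. This is where the argument parallels the techniques of \cite{hns}: one dualizes, uses Plancherel in the $r$-variable to convert the $\cH$-norm into control over $\widehat{\psi*\sigma_{rt}}$ evaluated at frequencies $|\xi| \sim$ (dual of $r$), and then the Stein--Tomas bound supplies the needed $L^{p'}$ control on the resulting extension-type operator, with constant $O(2^{j(d-1)/2})$ from \eqref{Fsigmar}; combined with the measure of the support ($O(2^{jd})$) and Hölder in $t$ against the probability measure $\nu$, this yields the exponent $2^{-jd/p'}$ in \eqref{singlescale}.

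The main obstacle, I expect, is organizing the interaction between the $r$-integration (which lives on the Hilbert space $\cH = L^2(dr/r)$ and must be handled by Plancherel, not by the triangle inequality, to keep the $L^2$-summability on the right-hand side) and the Stein--Tomas estimate for the sphere (which is naturally an $L^p \to L^{p'}$ statement for a fixed averaging operator). Reconciling these requires a vector-valued or square-function form of the adjoint restriction estimate — essentially an $L^p(\bbR^d) \to L^{p'}(\bbR^d; L^2(dr))$ bound for the operator $f \mapsto \widehat{\sigma_r} \widehat f$ restricted to a dyadic frequency annulus — which does follow from Stein--Tomas by a standard $TT^*$ / Plancherel argument but must be set up carefully with the right powers of $2^j$. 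A secondary technical point is verifying that the $\eta$-localization genuinely lets one restrict attention to frequencies where the Bessel factor $\Jscr(rt|\xi|)$ is in its oscillatory regime $rt|\xi| \gtrsim 1$; for the complementary low-frequency range the kernel is smooth and one gets the estimate trivially (indeed with room to spare), so this splitting is routine but should be recorded. Once \eqref{singlescale} is in hand, raising to the $p$-th power and summing in $j$ is immediate and completes the proof.
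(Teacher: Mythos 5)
Your reduction to a single-scale estimate, and the way you propose to prove that single-scale estimate (dualize, Plancherel in $r$, Stein--Tomas on the sphere, then trade $L^2\to L^p$ on the output using the $O(2^{jd})$ support), is essentially the content of the paper's Lemma \ref{STlemma} together with the remark immediately following it. The fatal gap is in your last sentence: ``raising to the $p$-th power and summing in $j$'' is \emph{not} immediate, and in fact is not a valid step for $p>1$. The left-hand side of the proposition is the $L^p$ norm of the sum over $j$, while the right-hand side is an $\ell^p$-sum of norms; the inequality $\|\sum_j g_j\|_p^p\le \sum_j\|g_j\|_p^p$ holds only for $p\le 1$ (or for pieces with essentially disjoint supports, which is why the single-$j$ case works after tiling by cubes of side $2^j$, but fails across different $j$ since those supports are nested). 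The paper states this explicitly: the bound $\|\int\!\!\int_{I_j}\psi*\sigma_{rt}*F_{Q,t}\,dr\,d\nu(t)\|_p\lc 2^{jd/p}\|\cdot\|_p$ for a single $2^j$-cube ``is however insufficient to prove Proposition \ref{main2} for $p>1$.'' Your proposal supplies no mechanism for controlling the interaction between different scales $j$, which is the actual content of the proof.

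What the paper does to close this gap, and what is entirely absent from your outline, is the following. After discretizing into normalized atoms $b_{j,z,t}$ on unit cubes, the $\ell^p\to L^p$ bound is reduced by real interpolation to a restricted weak-type inequality \eqref{restronlattice}; a density decomposition removes the cubes where the index sets $\cE_j$ are too concentrated (contributing an exceptional set of acceptable measure); and the remaining sum $\sum_j V_j$ is estimated in $L^2$ via \eqref{mainL2}. That $L^2$ estimate uses Cauchy--Schwarz (costing $\log\la$) for the $O(\log\la)$ nearby scales together with Lemma \ref{STlemma} for the diagonal, and -- crucially -- the cross terms $\inn{V_j}{V_k}$ for $k<j-10$ are controlled by the orthogonality Lemma \ref{orthlemma}, which exploits the decay $|\Jscr(\rho|y_1-y_2|)|\lc(1+\rho M)^{-(d-1)/2}$ for spatially separated atoms, combined with a counting bound on $\#\fZ_k(n,z,t)$ coming from the density decomposition. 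None of the kernel decomposition near the sphere or the vector-valued Stein--Tomas setup you describe substitutes for this cross-scale almost-orthogonality; without it the argument does not close for any $p>1$.
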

The measure $\nu$ is used here to unify the proofs of Theorems \ref{maxmult} and \ref{fmthm}.
For our applications we  are only 
interested  in two such measures. For Theorems 
\ref{sqthm} and  \ref{fmthm} we take for $\nu$ the Dirac measure at $t=1$
(and consequently in this case we can  set $\sigma_{rt}=\sigma_r$ and eliminate all $t$-integrals in the proofs below). For the application to Theorem \ref{maxmult} we take for $\nu$ the  Lebesgue measure on $[1,2]$.

We first give a proof for the $L^p$ bound of each  term in the $j$-sum,
which uses standard arguments (\cite{feff}, \cite{feff-isr}).

\begin{lemma} \label{STlemma}
Let $1\le p\le \frac{2(d+1)}{d+3}$. Then
$$
\Big\|\int_1^2\int_{I_j} \psi*\sigma_{rt}* F_t(r,\cdot) \,
dr\,d\nu(t)\Big\|_2\lc 2^{jd/2}
\big\|F\|_{L^p(L^1(\cH))}\,.
$$
\end{lemma}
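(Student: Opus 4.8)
The plan is to argue by duality, pairing the expression against $g\in L^2$, and to split off from $\widehat{\psi\ast\sigma_{rt}}$ both its size $O(r^{(d-1)/2})$ (see~\eqref{Fsigmar}) and its oscillation, so that each sphere $\sigma_{rt}$ gets replaced by a half-wave propagator $e^{\pm irt\sqrt{-\Delta}}$ acting on frequency-localized functions; the whole estimate then collapses to one square-function bound for the half-wave group that is precisely of Stein--Tomas type.

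I would first reduce, by a routine dyadic decomposition of $\widehat\psi=u(|\cdot|)$ (using that $u$ vanishes to high order at $0$ and is Schwartz), to the case $\supp\widehat\psi\subset\{|\xi|\sim1\}$. Writing $\psi\ast\sigma_{rt}\ast F_t(r,\cdot)=\sigma_{rt}\ast G_t(r,\cdot)$ with $G_t(r,\cdot)=\psi\ast F_t(r,\cdot)$ frequency localized to $\{|\xi|\sim1\}$, and using that $rt|\xi|\sim2^j\gg1$ there, the Bessel formula \eqref{besselfct} gives $\widehat{\sigma_{rt}}(\xi)=\sum_{\pm}e^{\pm irt|\xi|}(rt)^{d-1}a_\pm(rt|\xi|)$ with $a_\pm$ a symbol of order $-(d-1)/2$. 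Thus $\sigma_{rt}\ast G_t(r,\cdot)=\sum_{\pm}e^{\pm irt\sqrt{-\Delta}}\widetilde G^{\pm}_{t,r}$, where $\widehat{\widetilde G^{\pm}_{t,r}}=(rt)^{d-1}a_\pm(rt|\cdot|)\,\widehat{G_t(r,\cdot)}$; on $\{|\xi|\sim1\}$ the symbol $(rt)^{d-1}a_\pm(rt|\xi|)$ is $2^{j(d-1)/2}$ times a symbol bounded with all its $\xi$-derivatives uniformly in $r\sim2^j$, $t\in[1,2]$, so absorbing it into $F$ costs only a factor $2^{j(d-1)/2}$ in the $L^p(L^1(\cH))$ norm (Young's inequality for the rapidly decaying kernels, plus Minkowski's inequality in $r$ and the $\nu$-integral). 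Since $2^{j(d-1)/2}\cdot2^{j/2}=2^{jd/2}$, it suffices to prove, for $h=h_{t,r}$ frequency localized to $\{|\xi|\sim1\}$ and each sign,
\[
\Big\|\int_1^2\!\!\int_{I_j} e^{\pm irt\sqrt{-\Delta}}h_{t,r}\,dr\,d\nu(t)\Big\|_{2}\lesssim 2^{j/2}\Big\|\int_1^2\Big(\int_{I_j}|h_{t,r}|^2\tfrac{dr}{r}\Big)^{1/2}d\nu(t)\Big\|_{p}.
\]

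The left side equals $\sup_{\|g\|_2\le1}\big|\int_1^2\int_{I_j}\langle h_{t,r},e^{\mp irt\sqrt{-\Delta}}Pg\rangle\,dr\,d\nu(t)\big|$, where $P$ cuts frequencies to a fixed neighborhood of $\{|\xi|\sim1\}$ (so $\|Pg\|_2\lesssim\|g\|_2$). For fixed $(t,x)$ I would apply Cauchy--Schwarz in $r$ with weights $\tfrac{dr}{r}$ on $h$ and $r\,dr$ on the propagator; since $r\sim2^j$ on $I_j$ and the substitution $s=rt$ (with $t\ge1$) shows $\int_{I_j}|e^{\mp irt\sqrt{-\Delta}}Pg(x)|^2dr\le\int_{\bbR}|e^{\mp is\sqrt{-\Delta}}Pg(x)|^2ds$, one obtains $\big(\int_{I_j}r|e^{\mp irt\sqrt{-\Delta}}Pg(x)|^2dr\big)^{1/2}\lesssim2^{j/2}\big(\int_{\bbR}|e^{\mp is\sqrt{-\Delta}}Pg(x)|^2ds\big)^{1/2}$, which is independent of $t$. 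Hölder's inequality in $x$ (exponents $p',p$) then separates this from the $h$-factor, and integrating the latter in $t$ reconstitutes $\big\|\int_1^2(\int_{I_j}|h_{t,r}|^2\tfrac{dr}{r})^{1/2}d\nu(t)\big\|_p$. Everything thus reduces to the $j$- and $t$-independent estimate
\[
\Big\|\Big(\int_{\bbR}\big|e^{\mp is\sqrt{-\Delta}}Pg\big|^2ds\Big)^{1/2}\Big\|_{p'}\lesssim\|g\|_{2},\qquad p'\ge\tfrac{2(d+1)}{d-1}.
\]

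This is the heart of the matter, and the place where the hypothesis $p\le\tfrac{2(d+1)}{d+3}$ is used. Fixing $x$ and passing to polar coordinates, $e^{\mp is\sqrt{-\Delta}}Pg(x)=c_d\int_0^\infty e^{\mp is\rho}\Psi_x(\rho)\,d\rho$ with $\Psi_x(\rho)=\rho^{d-1}\int_{S^{d-1}}e^{i\rho\,x\cdot\omega}\chi(\rho\omega)\widehat g(\rho\omega)\,d\omega$ supported in $\{\rho\sim1\}$ ($\chi$ the symbol of $P$); Plancherel in $s$ gives $\int_{\bbR}|e^{\mp is\sqrt{-\Delta}}Pg(x)|^2ds\sim\int_{\rho\sim1}|\widehat{F_\rho d\sigma_\rho}(x)|^2d\rho$, where $d\sigma_\rho$ is surface measure on the sphere of radius $\rho$ and $F_\rho$ is the restriction of $\chi\widehat g$ to that sphere. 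Minkowski's inequality in $\rho$ ($p'/2\ge1$) and the adjoint Stein--Tomas restriction estimate $\|\widehat{F_\rho d\sigma_\rho}\|_{p'}\lesssim\|F_\rho\|_{L^2(d\sigma_\rho)}$ on each sphere (uniformly for $\rho\sim1$, valid exactly for $p'\ge\tfrac{2(d+1)}{d-1}$) then yield
\[
\Big\|\Big(\int_{\bbR}\big|e^{\mp is\sqrt{-\Delta}}Pg\big|^2ds\Big)^{1/2}\Big\|_{p'}^{2}\lesssim\int_{\rho\sim1}\|\widehat{F_\rho d\sigma_\rho}\|_{p'}^{2}d\rho\lesssim\int_{\rho\sim1}\|F_\rho\|_{L^2(d\sigma_\rho)}^{2}d\rho=\int_{|\xi|\sim1}|\chi(\xi)\widehat g(\xi)|^{2}d\xi\lesssim\|g\|_2^{2}.
\]
It is essential to carry the full $(t,r)$-integration through to this point and to use the global bound: estimating instead by Minkowski's inequality in $t$ at the outset, or by an $\ell^2$-sum over unit $r$-windows of local smoothing inequalities, each loses a factor $2^{j/2}$ (yielding only the weaker constant $2^{j(d+1)/2}$, which is the sharp $L^2\to L^2$ rather than $L^p\to L^2$ constant). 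Finally, for Theorems \ref{sqthm} and \ref{fmthm} the measure $\nu$ is a point mass, so the $t$-integrals disappear and $\sigma_{rt}=\sigma_r$ everywhere; for Theorem \ref{maxmult} one takes $\nu$ equal to Lebesgue measure on $[1,2]$ and the argument above applies as written.
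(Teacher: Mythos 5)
Your argument is correct in substance but runs dual to the paper's. The paper applies Plancherel in $x$ first, writes everything in polar coordinates in $\xi$, and invokes the \emph{forward} Stein--Tomas restriction estimate $\|\widehat G|_{\rho S^{d-1}}\|_{L^2(d\sigma)}\lc\|G\|_p$ on the function being convolved; after two applications of Minkowski's inequality the radial $\rho$-integral collapses, for each fixed $(x,t)$, to a clean Plancherel identity for the Fourier transform of the radial function $w\mapsto \chi_{I_j}(|w|)F_t(|w|,x)$, with the weight $|u(\rho)|^2\rho^{2d/p-2d}$ controlled by the vanishing of $\widehat\psi$ at the origin. You instead dualize against $g\in L^2$, decompose $\sigma_{rt}$ into half-wave propagators via Bessel asymptotics, and reduce to the adjoint Stein--Tomas extension estimate through the local-smoothing square function $\|(\int_\bbR|e^{is\sqrt{-\Delta}}Pg|^2ds)^{1/2}\|_{p'}\lc\|g\|_2$. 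The two routes use the same engine; yours is conceptually transparent (it makes visible why the bound is the $L^p\to L^2$ rather than the $L^2\to L^2$ constant) but costs you a preliminary dyadic reduction to frequencies $|\xi|\sim1$ and a symbol calculus for the Bessel factors, both of which the paper avoids by keeping $u(\rho)$ and $\Jscr$ explicit throughout.

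There is one step you must reorder. As written, you apply H\"older in $x$ for each fixed $t$ and then integrate in $t$, which produces $\int_1^2\|(\int_{I_j}|h_{t,r}|^2\frac{dr}{r})^{1/2}\|_p\,d\nu(t)$ on the right-hand side. By Minkowski's integral inequality this quantity \emph{dominates} the required norm $\|\int_1^2(\int_{I_j}|h_{t,r}|^2\frac{dr}{r})^{1/2}d\nu(t)\|_p=\|F\|_{L^p(L^1(\cH))}$, and the gap can be strict (take $\nu$ Lebesgue and $h_{t,\cdot}$ supported in disjoint sets as $t$ varies), so it does not "reconstitute" the stated bound. The repair is immediate and already implicit in your own observation that the $g$-factor is independent of $t$: after the pointwise Cauchy--Schwarz in $r$, integrate the resulting bound in $t$ \emph{pointwise in $x$}, obtaining $2^{j/2}G(x)\int_1^2 H_t(x)\,d\nu(t)$ with $G(x)=(\int_\bbR|e^{\mp is\sqrt{-\Delta}}Pg(x)|^2ds)^{1/2}$, and only then apply H\"older in $x$. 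This yields $2^{j/2}\|G\|_{p'}\,\|\int_1^2 H_t\,d\nu(t)\|_p$, which is the correct right-hand side. (For Theorems \ref{sqthm} and \ref{fmthm} the measure $\nu$ is a point mass and the distinction is vacuous, but for Theorem \ref{maxmult} the order matters.)
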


\begin{proof}
We use Plancherel's theorem and then the Stein--Tomas restriction theorem
\cite{tomas}. With  $\Jscr$ as in \eqref{besselfct}
so that
 $\widehat \sigma_r(\xi)= r^{d-1} \Jscr(r|\xi|)$ and $\widehat \psi(\xi)= u(|\xi|)$, we get from  the restriction theorem
 \begin{align*}
 &\Big\|\int_1^2
\int_{I_j}\psi*\sigma_{rt}*
F_t(r,\cdot) \,dr\, d\nu(t)
\Big\|_2^2
\\&=c
 \int|u(\rho)|^2
\int_{S^{d-1}}\Big|\int_1^2\int_{I_j} (rt)^{d-1}\Jscr(rt\rho)
\widehat{F_t}(r,\rho\xi') \, dr
\,d\nu(t)
\Big|^2 d\sigma(\xi') \, \rho^{d-1}d\rho
\\
&\lc
 \int|u(\rho)|^2\rho^{\frac{2d}{p}-d-1}
\Big\|\int_1^2
\int_{I_j} (rt)^{d-1}\Jscr(rt\rho) F_t(r,\cdot) \,
dr\,d\nu(t)\Big\|_p^2  \, d\rho
\\
&\lc
 \Big\|
\int_1^2
\Big(\int|u(\rho)|^2\rho^{\frac{2d}{p}-d-1}
\Big|     \int_{I_j} r^{d-1}\Jscr(rt\rho) F_t(r,\cdot) \,
dr\Big|^2d\rho\Big)^{1/2}
d\nu(t)
\Big\|_p^2\,.
\end{align*}
In the last step we have used Minkowski's integral inequality.
We claim that, for fixed $x\in \bbR^d$ and  $t\in [1,2]$,
\Be\label{fixedxest}
\int|u(\rho)|^2\rho^{\frac{2d}{p}-d-1} \Big|     \int_{I_j}
r^{d-1}\Jscr(rt\rho) F_t(r,x) \, dr\Big|^2 d\rho \lc \int_{I_j} \big|
F_t(r,x) \big|^2 \,r^{d-1} dr\,,
\Ee
with the implicit constant uniform in $x,t$,
and the lemma follows  by substituting this
in the previous display.

To see  \eqref{fixedxest} we first notice 
that for a radial  $H(w)=H_\circ(|w|)$ we have
$$\int H_\circ(r) r^{d-1} \Jscr(r|\xi|)\,dr =c_d \widehat H(\xi).
$$
Thus, if we take $H^{x,t}(w)=\chi_{I_j}(|w|)F_t(|w|,x)$, the left-hand side of \eqref{fixedxest}  is a constant multiple of
\begin{align*}
&\int|\widehat \psi(\xi)|^2 |\xi|^{\frac{2d}{p}-2d} |\widehat
{H^{x,t}}(t\xi)|^2 d\xi
\\& \lc \int |\widehat {H^{x,t}}(\xi)|^2 d\xi = c \int |H^{x,t}(w)|^2 dw = c'
\int_{I_j} |F_t(r,x)|^2 r^{d-1}\,dr\,,
\end{align*}
and we are done.
In the inequality we used that $\widehat\psi$ vanishes of high order
at the origin.
\end{proof}
If we fix $j$  and assume that $F_{Q,t}(r,\cdot)$ is supported for all $r$ in a
cube $Q$  of sidelength $2^j$ then  the expression
$\int_1^2\int_{I_j} \psi*\sigma_{rt}* F_{Q,t}(r,\cdot) dr\,d\nu(t)$ is supported in a similar slightly larger  cube.
From this it quickly follows   that
\begin{multline*}
\Big\|\int_1^2\int_{I_j} \psi*\sigma_{rt}* F_{Q,t}(r,\cdot) \,
dr\,d\nu(t)\Big\|_p\\\lc 2^{jd/p}
\Big\|\int_1^2\Big(\int_{0}^\infty|F_{Q,t}(r,\cdot)|^2\frac{dr}{r}\Big)^{1/2}d\nu(t)\Big\|_p.
\end{multline*}
This estimate is however insufficient to prove Proposition \ref{main2} for
$p>1$.
We shall also  need the following  orthogonality lemma.
\begin{lemma} \label{orthlemma}
Let $J_1, J_2\subset (0,\infty)$ be intervals and let $E_1$, $E_2$ be compact sets
in $\bbR^d$  with $\dist(E_1,E_2)\ge M\ge 1$. Suppose that for every $r\in J_i$,
the function $x\mapsto f_{i}(r,x)$ is supported in $E_i$. Then,
for $t_1,t_2 \in [1,2]$,
\begin{multline*}
\Big|\int_{J_1}\int_{J_2} \biginn{\psi*
\sigma_{r_1t_1} * f_{1}(r_1,\cdot)}
{\psi* \sigma_{r_2t_2} * f_{2}(r_2,\cdot)} dr_1dr_2\, 
\Big| \\
\lc M^{-\frac{d-1}{2}}
 \prod_{i=1}^2\Big[\int_1^2\int \Big(\int_{J_i}|f_{i}(r, y)|^2r^{d-1}
 dr\Big)^{1/2}  dy\Big]\,.
\end{multline*}
\end{lemma}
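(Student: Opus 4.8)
The plan is to reduce the bilinear pairing to a statement about the convolution kernel $\psi*\sigma_{r_1t_1}*\widetilde{\psi*\sigma_{r_2t_2}}$ (where $\widetilde{g}(x)=\overline{g(-x)}$) and exploit the fact that this kernel, evaluated at points $x_1-x_2$ with $|x_1-x_2|\ge M$, carries a gain of $M^{-(d-1)/2}$. Concretely, by Fubini the left-hand side equals $\int_{J_1}\int_{J_2}\langle K_{r_1,r_2}* f_1(r_1,\cdot), f_2(r_2,\cdot)\rangle\,dr_1\,dr_2$ with $K_{r_1,r_2}=\psi*\sigma_{r_1t_1}*\widetilde{\psi*\sigma_{r_2t_2}}$. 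Since $f_1(r_1,\cdot)$ is supported in $E_1$ and $f_2(r_2,\cdot)$ in $E_2$, only values of $K_{r_1,r_2}(z)$ with $|z|\ge M$ enter, so it suffices to prove the pointwise kernel bound
\[
\sup_{|z|\ge M}\ |K_{r_1,r_2}(z)|\ \lc\ M^{-\frac{d-1}{2}},\qquad r_1\in J_1,\ r_2\in J_2,\ t_1,t_2\in[1,2],
\]
uniformly, after which two applications of the Cauchy--Schwarz inequality in $x$ (against the $L^2$ masses of $f_1,f_2$) and then in $r_1,r_2$ (using that $J_i\subset(0,\infty)$, $r\sim r^{d-1}\cdot r^{-(d-2)}$ so that $dr$ and $r^{d-1}dr$ are comparable on the relevant scales — here one absorbs the $r$-weights as in Lemma~\ref{STlemma}) yield the product of the two bracketed quantities. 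The extra $\int_1^2\,dt$ in the bracketed factors is harmless since the integrand there does not depend on $t$ and $\nu$ was normalized to be a probability measure; it is inserted only to match the norm appearing in Proposition~\ref{main2}.

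The key step is therefore the kernel estimate. I would prove it on the Fourier side combined with stationary phase. We have $\widehat{K_{r_1,r_2}}(\xi)=u(|\xi|)^2\,(r_1t_1)^{d-1}(r_2t_2)^{d-1}\,\Jscr(r_1t_1|\xi|)\,\overline{\Jscr(r_2t_2|\xi|)}$. The factor $u(|\xi|)^2$ localizes $|\xi|\sim 1$ and vanishes to high order at $0$, so $K_{r_1,r_2}$ is a Schwartz function; the point is the decay in $|z|$ with the correct power. One route: write $K_{r_1,r_2}= \psi*\sigma_{r_1t_1}*\widetilde{\psi*\sigma_{r_2t_2}}$ and use the standard fact (cf.\ \eqref{Fsigmar} and the large-argument asymptotics of Bessel functions) that $\psi*\sigma_{\rho}$ is, up to a rapidly decaying tail, a sum of two terms of the form $c_\pm\,\rho^{(d-1)/2}\,|x|^{-(d-1)/2}e^{\pm i|x|}a_\pm(|x|)$ supported in $|x|\sim\rho$, with $a_\pm$ a symbol of order $0$. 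Convolving two such wave packets and invoking nonstationary/stationary phase on the resulting oscillatory integral shows that the convolution at a point of size $\ge M$ picks up the bound $M^{-(d-1)/2}$ (the worst case being when the two spheres of radii $\sim r_1,\sim r_2$ are tangent, which is exactly where one loses only half a derivative); the non-tangential and non-oscillatory contributions are better by the usual integration-by-parts argument. This is the classical $L^2$ spherical-maximal-type orthogonality computation (as in Fefferman \cite{feff}, \cite{feff-isr} and in the circular/spherical square-function literature), so I would present it compactly rather than in full detail.

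The main obstacle is bookkeeping the $r$-dependence of the wave-packet amplitudes uniformly over $r_1\in J_1$, $r_2\in J_2$ and $t_i\in[1,2]$, so that the constant in $M^{-(d-1)/2}$ genuinely does not see the (possibly very long or far-away) intervals $J_i$: the amplitudes grow like $r_i^{(d-1)/2}$, and one must check these powers cancel correctly against the $r^{d-1}\,dr$ weights that appear in the target norm and against the Jacobians in the stationary-phase analysis, leaving a clean $M^{-(d-1)/2}$ with no residual power of $r_i$ or $|J_i|$. Once the kernel bound is in hand with a uniform constant, the two Cauchy--Schwarz steps are routine and give exactly the stated product bound.
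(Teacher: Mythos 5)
There is a genuine gap, and it sits exactly at the point you flag as "bookkeeping": the powers of $r_i$ do \emph{not} cancel in the way your plan requires. First, the claimed uniform kernel bound $\sup_{|z|\ge M}|K_{r_1,r_2}(z)|\lc M^{-(d-1)/2}$ is false: since $\widehat{K_{r_1,r_2}}(\xi)=|u(|\xi|)|^2(r_1t_1)^{d-1}(r_2t_2)^{d-1}\Jscr(r_1t_1|\xi|)\overline{\Jscr(r_2t_2|\xi|)}$ and $|\Jscr(s)|\lc(1+s)^{-(d-1)/2}$, the correct bound is $|K_{r_1,r_2}(z)|\lc (r_1r_2)^{(d-1)/2}(1+|z|)^{-(d-1)/2}$, and the factor $(r_1r_2)^{(d-1)/2}$ is genuinely there (e.g.\ at internal/external tangency of the two spheres). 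That factor can only be absorbed by the weights $r^{d-1}\,dr$ in the target norm, which brings in the second, fatal problem: after the pointwise kernel bound you are left with $\int_{J_i}r^{(d-1)/2}|f_i(r,y)|\,dr$, and passing from this $L^1(dr)$ quantity to the $L^2(r^{d-1}dr)$ quantity $\bigl(\int_{J_i}|f_i(r,y)|^2r^{d-1}dr\bigr)^{1/2}$ by Cauchy--Schwarz in $r$ costs a factor $|J_i|^{1/2}$. In the application (Proposition \ref{main2}) the intervals are $J_i=I_j=[2^j,2^{j+1}]$, so this loss is $2^{j/2}$ per factor and destroys the summation in the proof of \eqref{mainL2}. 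Your parenthetical claim that "$dr$ and $r^{d-1}dr$ are comparable on the relevant scales" is also incorrect: $r^{d-1}\sim 2^{j(d-1)}$ on $I_j$.

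The way the paper avoids both problems is to never take absolute values inside the $r$-integral. By Parseval and polar coordinates in $\xi$, the separation is encoded in a single factor $\Jscr(\rho|y_1-y_2|)$, whose Bessel decay gives $(1+\rho M)^{-(d-1)/2}$; meanwhile the inner integrals $\int_{J_i}r^{d-1}\Jscr(rt_i\rho)f_i(r,y_i)\,dr$ are recognized as $c\,\widehat{H^{y_i}_i}(t_i\xi)$ for the radial profiles $H^{y_i}_i(w)=f_i(|w|,y_i)\chi_{J_i}(|w|)$, and Plancherel in $\xi$ converts them into exactly $\bigl(\int_{J_i}|f_i(r,y_i)|^2r^{d-1}dr\bigr)^{1/2}$ with no dependence on $|J_i|$ and with the $r^{(d-1)/2}$ powers accounted for. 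If you want to keep a physical-space/wave-packet formulation, you would need some substitute for this Plancherel step in $r$ (e.g.\ an $L^2$-orthogonality argument in $r_1,r_2$ exploiting the oscillation $e^{\pm ir_1t_1\rho\pm ir_2t_2\rho}$), not a crude Cauchy--Schwarz on $J_1\times J_2$; as written, the proposal does not prove the lemma.
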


\begin{proof}
We follow  \cite{hns} and
 apply Parseval's identity  and  polar coordinates in~$\xi$.
Then, 
\begin{align*}
&\biginn {\psi* \sigma_{r_1t_1}* f_{1}(r_1, \cdot)}{\psi* \sigma_{{r_2t_2}} *
  f_{2}(r_2,\cdot)}
\\&=c\int |\widehat\psi(\xi)|^2
\widehat \sigma_{r_1t_1}(\xi)  \overline{\widehat \sigma_{r_2t_2}(\xi) }
\iint f_{1}(r_1,y_1)  \overline{f_{2}(r_2,y_2) } e^{i\inn{\xi}{y_2-y_1}}\, 
dy_1dy_2d\xi
\\
&=c'\int |u(\rho)|^2
(r_1t_1)^{d-1} \Jscr(r_1t_1\rho) (r_2t_2)^{d-1} \Jscr(r_2t_2\rho)\,
\times\\&\quad\quad\quad\quad\quad\quad\quad\iint f_{1}(r_1,y_1)
\overline{f_{2}(r_2,y_2) } \Jscr(\rho|y_1-y_2|)
 \,dy_1\,dy_2\,\rho^{d-1}d\rho,
\end{align*}
so that the left-hand side of the desired inequality
 is equal to a constant multiple of
\begin{multline}\label{polarexpression}
\int\iint  |u(\rho)|^2 
\int_{J_1} (r_1t_1)^{d-1}\Jscr(r_1t_1\rho)f_{1}(r_1,y_1)
\, dr_1\,
\\
\times\,\int_{J_2} (r_2t_2)^{d-1}\Jscr(r_2t_2\rho)\overline{f_{2}(r_2,y_2)}
\, dr_2\,
 \,\,\Jscr(\rho|y_1-y_2|)
\,dy_1\,dy_2\,\rho^{d-1}d\rho\,.
\end{multline}
Now define two radial kernels by
$H^{y_i}_{i}(w)
= f_{i}(|w|,y_i) \chi_{J_i}(|w|)$
so  that the expression \eqref{polarexpression} can be written  as a constant times
\Be \int  \iint |\widehat\psi(\xi)|^2 \widehat H^{y_1}_{1}(t_1\xi)
\overline{\widehat H^{y_2}_{2}(t_2\xi)}\,\Jscr(|\xi||y_1-y_2|) \, dy_1
dy_2\,d\xi. \Ee Then, using the decay for Bessel functions and the $M$-separation assumption,
$$|\Jscr(|\xi||y_1-y_2|)|
\lc (1+\rho M)^{-\frac{d-1}{2}},\quad  y_i\in E_i,\ \,i=1,2.
$$
By   the Cauchy--Schwarz inequality,
the left-hand side of the desired inequality is thus bounded by
\begin{align*}
&\prod_{i=1,2}\Big[\int_{y_i\in \bbR^d}
\Big(\int \frac{|\widehat \psi(\xi)|^2}
{(1+|\xi|M)^{\frac{d-1}{2}}}
|\widehat {H^{y_i}_i}(t_i\xi)|^2 d\xi\Big)^{1/2} dy\Big]
\\&\lc M^{-\frac{d-1}2} \prod_{i=1,2}\Big[ \int_{y\in \bbR^d} 
\big\|\widehat{ H^{y_i}_i}\big\|_2 dy\Big]\,,
\end{align*} and by Plancherel's theorem  this is
\begin{align*}
\\&\lc M^{-\frac{d-1}{2}} \prod_{i=1,2}
\Big[ \int
\Big(\int_{w_i\in \bbR^d}|f_{i}(|w|,y)|^2 \chi_{J_i}(|w|) dw\Big)^{1/2} 
dy\Big]
\\&\lc M^{-\frac{d-1}{2}}
\prod_{i=1,2} \Big[\int
\Big(\int_{J_i}|f_{i}(r,y)|^2  r^{d-1}\,dr\Big)^{1/2} dy\Big]\,,
\end{align*}
and so we are done.
\end{proof}

\subsubsection*{Proof of Proposition \ref{main2}.}
The case $p=1$ is trivial and  we assume $p>1$
in what follows.
For $z\in \bbZ^d$ consider the cube $q_z$ of all $x$ with $z_i\le x_i< z_i+1$ for $i=1,\dots, d$. Let
$$\gamma_{j,z}(f)
=  \sup_{x\in q_z} \int_1^2\Big(\int_0^\infty \Big|\int \eta(x-y) F_{j,t}(r,y)\, dy
\Big|^2\frac{dr}{r}\Big)^{1/2}d\nu(t),$$
and since $\eta$ is a Schwartz function it is straightforward to verify that,  for every $j$, 
\Be\label{gammadiscr}
\Big(\sum_{z\in \bbZ^d}|\gamma_{j,z}(f)
|^p\Big)^{1/p}\lc
\Big\|\int_1^2\Big(\int_0^\infty |F_{j,t}(r,\cdot)|^2\frac{dr}{r}\Big)^{1/2}d\nu(t)
\Big\|_p\,,
\Ee
with the implicit constant independent of $j$.
If
$\gamma_{j,z}(f)\neq 0$ we set
$$b_{j,z,t}(r,x)=
 [\gamma_{j,z}(f)]^{-1} \chi_{q_z}(x)\int \eta(x-y) F_{j,t}(r,y)\, dy
$$ and if
$\gamma_{j,z}(f)=0$  we set $b_{j,z,t}=0$. Then
\Be \label{bjzassumpt}
\sum_{z\in \bbZ^d}  \sup_{x\in q_z}\int_1^2
 \Big(\int_0^\infty|b_{j,z,t}(r,x)|^2 \frac {dr}{r}\Big)^{1/2}d\nu(t)\le 1.
\Ee
Let
$$V_{j,z}(x)= \int_1^2\int_{I_j} \psi*\sigma_{rt}* b_{j,z,t}(r,x)\, dr\,d\nu(t). $$
In view of \eqref{gammadiscr} it suffices to show that
for arbitrary functions $z\mapsto \gamma_{j,z}$ on $\bbZ^d$ we have,
 for $1<p<\frac{2(d+1)}{d+3}$,
\Be\label{Lponlattice}\Big\| \sum_{j\ge 1}\sum_{z\in \bbZ^d}\gamma_{j,z}
 V_{j,z}
\Big\|_p \lc\Big(\sum_{j\ge 1} \sum_{z\in \bbZ^d}  |\gamma_{j,z}|^p 2^{jd}
 \Big)^{1/p}\,
\Ee
where the implicit constant is independent of the specific choices of the
$b_{j,z,t}$ (satisfying \eqref{bjzassumpt} with $b_{j,z,t}$ supported in $q_z$).
Let $\mu_d$ denote the measure on $\bbN\times\bbZ^d$ given by
$$\mu_d(E)= \sum_{j\ge 1} 2^{jd} \#\{z\in\bbZ^d: (j,z)\in E\}\,.$$
Then \eqref{Lponlattice} expresses  the
$L^p(\bbZ^d\times \bbN,\mu_d)\to L^p(\bbR^d)$ boundedness of an operator $\cT$. In the open $p$-range it suffices by real interpolation to show that
$\cT$ maps $L^{p,1}(\bbZ^d\times \bbN,\mu_d)$ to $L^{p,\infty}(\bbR^d)$.
 This amounts to checking the restricted weak-type inequality
\Be\label{restronlattice}
\meas\big(\big\{x:  \big|\sum_{j\ge 1}\sum_{z\in \cE_j} V_{j,z}
\big|\,>\,\la\big\}\big) \lc \la^{-p} \sum_{j\ge 1}  2^{jd} \#(\cE_j)
\Ee
where  $\cE_j$ are finite subsets of $\bbZ^d$.
Now for each $(j,z)$ the
term   $V_{j,z}$
is supported on a ball
of radius $C2^{j+1}$ and therefore  the entire sum is supported on a set of measure $\ls \sum_{j\ge 1}  2^{jd} \#(\cE_j)$. Thus the estimate \eqref{restronlattice}
holds for $\la\le 10$. Assume now that $\la>10$.

We decompose $\bbR^d$
into dyadic \lq half open\rq  \, cubes of
sidelength $2^j$ and let~$\fQ_j$
be the collection of these
$2^j$-cubes. For each $Q\in \fQ_j$ let
$Q^*$ be  the cube  with same center as $Q$ but sidelength $2^{j+5}$.
Note that for $z\in Q$ the term $V_{j,z}$
is supported in $Q^*$. Letting
$$\fQ_j(\la)\,:=\{Q\in\fQ_j\,:\,
\#(\cE_j\cap Q)> \la^p\,\}$$ and
$$\Omega= \bigcup_j \bigcup_{Q\in \fQ_j(\la)} Q^*\,,$$
we have the favorable estimate
\begin{align*}
\meas(\Omega) &\le 2^5 \sum_{j\ge1} \sum_{Q\in \fQ_j(\la)}|Q|
\le 2^5 \sum_{j\ge1} 2^{jd}\sum_{Q\in \fQ_j(\la)}\frac{\#(\cE_j\cap Q)}{\la^p}
\notag \\
&\lc \la^{-p} \sum_{j\ge1}2^{jd}\#(\cE_j)\,.
\label{excset}
\end{align*}
Thus the remaining  estimates need  only involve the \lq good\rq \ part of
$\cE_j$;
$$\cE_j^{\la}= \bigcup_{Q\in \fQ_j\setminus\fQ_j(\la)} Q\cap \cE_j.$$ Note that every subset of diameter $C 2^j$, with  $C>1$, contains  $\lc C^d\la^p$
points in $\cE_j^\la$. Letting
$$V_j = \sum_{z\in \E_j^\la} V_{j,z}\,,$$
it remains  to show that
$$
\meas\big(\big\{x:  \big|\sum_{j\ge 1} V_j(x)\big|\,>\,\la\big\}\big) \lc \la^{-p} \sum_{j\ge 1}  2^{jd} \#(\cE_j)\,.
$$
This will follow from
\Be\label{mainL2}
\Big\|\sum_{j\ge1} V_j\Big\|_2^2 \le C \la^{\frac{2p}{d+1} }\log \la
\sum_{j\ge 1}  2^{jd} \#(\cE_j)
\Ee
and Tshebyshev's inequality since, for $p<\frac{2(d+1)}{d+3}$ and $\la>1$,
$$\la^{\frac{2p}{d+1} -2 }\log \la  \le C_p \la^{-p}.$$

\medskip

\noi{\it Proof of \eqref{mainL2}.}
Setting $ N(\la)= 10 \log_2\la$,  we treat the sums over $j\le N(\la)$ and $j> N(\la)$ separately.
Using the Cauchy--Schwarz inequality for the first sum, \begin{multline} \label{Vjexp}\Big\|\sum_{j\ge 1}V_j\Big\|_2^2
\,\lc\, \log (\la) \sum_{j\le N(\la)}  \|V_j\|_2^2
+ \sum_{j> N(\la)}  \|V_j\|_2^2\\
+ \sum_{j> N(\la)}  \sum_{ N(\la )< k< j-10}
\big|\biginn{V_j}{V_k}\big|.
\end{multline}
Since the expression
$\sum_{z\in \cE_j^\la\cap Q} V_{j,z}$  is supported in $Q^*$ it follows
easily from Lemma \ref{STlemma} (applied with
the endpoint exponent $\frac{2(d+1)}{d+3}$)  that
\begin{align*}
\|V_j\|_2^2 &\lc \sum_{Q\in \fQ_j}
\Big\|\int_1^2
 \Big(\int_{I_j} 2^{jd} \Big|\sum_{z\in Q\cap \cE_j^\la} b_{j,z,t}(r,\cdot) \Big|^2 \, \frac{dr}{r}\Big)^{1/2}d\nu(t)\Big\|_{\frac{2(d+1)}{d+3}}^2.
\end{align*}
Since $Q\cap \cE_j^\la$ contains no more
than $\la^p$ points we have by \eqref{bjzassumpt}
\begin{multline*}
\Big\|
 \int_1^2\Big(\int_{I_j} 2^{jd} \Big|
\sum_{z\in Q\cap \cE_j^\la} b_{j,z,t}(r,\cdot) \Big|^2 \, \frac{dr}{r}\Big)^{1/2}d\nu(t)\Big\|_{\frac{2(d+1)}{d+3}}^2\
\\
\lc 2^{jd}
\big(\#(\cE_j^\la\cap Q)\big)^{\frac {d+3}{d+1}} \lc  2^{jd} \#(\cE_j\cap Q)
\la^{p \frac{2}{d+1}}
\end{multline*}
and thus
\Be\label{squaresumVj}
\sum_{j=1}^\infty \|V_j\|_2^2 \lc \la^{p \frac{2}{d+1}} \sum_{j} 2^{jd} \#\cE_j\,.
\Ee
Thus  we get the asserted bound \eqref{mainL2}
for the sum of the first two terms on the right-hand
side of $\eqref{Vjexp}$.

It remains to estimate the mixed terms
$\inn{V_j}{V_k}$ for $N(\la)<k<j-10$.
For fixed $j,k$ we
let $I_{j,k}^n=[2^kn, 2^k(n+1)]\cap I_j$ with $n\in \bbZ$, $n\approx 2^{j-k}$.
Then with
\begin{align*}
V^{k,n}_{j,z,t}:&= \psi* \int_{I^n_{j,k} } \sigma_{rt}*b_{j,z,t}(r,\cdot)\,dr
\\
V_{k,z'}:&=  \psi* \int_1^2\int_{I_k} \sigma_{rs}*b_{k,z',s}(r,\cdot)\,dr\,d\nu(s)
\end{align*}
we can write
$$
\inn{V_j}{V_k}=\\ \int_1^2\sum_n \sum_{z\in \cE_j^\la}
\sum_{z'\in \fZ_k(n,z,t)} \inn{V^{k,n}_{j,z,t}}{V_{k,z'}}\,d\nu(t);
$$
here, in view of the support properties, we were able to restrict 
the $z'$ summation to the set
$$\fZ_k(n,z,t):=\{z'\in \cE_k^\la: \big| |z'-z| -n t 2^k\big|\le C 2^k\},$$
with $C$ a suitable constant.
Observe that for $z'\in \fZ_k(n,z,t)$, with $k\le j-10$, we have $|z-z'|\approx 2^j$ since $n t2^k\in I_j$.

By Lemma \ref{orthlemma} (applied with the parameter $M\approx 2^j$)
we have for fixed $z,z',t$,
\Be\label{orthlemmaappl}
\big|\inn{V^{k,n}_{j,z,t}}{V_{k,z'}}|
\big| \lc 2^{-j\frac{d-1}{2}}
\int_{|y-z|\le C}
h_{j,k,t}^{z,n}(y)\, dy \,\int_{|y'-z'|\le C} \,\h_k^{z'}(y')\, dy'
\Ee
with
\begin{align*}
h_{j,k,t}^{z,n}(y) &=
\Big(\int_{I_{j,k}^n}|b_{j,z,t}(r,y)|^2 r^{d-1} dr\Big)^{1/2}\,,\\
h_k^{z'}(y)&=
\int_1^2\Big(\int_{I_k}|b_{k,z',s}(r,y)|^2 r^{d-1} dr\Big)^{1/2}d\nu(s) \,.
\end{align*}
By our normalization assumption \eqref{bjzassumpt},
\Be \label{hfunctionbounds}
\int_1^2\Big(
\sum_n
|h_{j,k,t}^{z,n}(y) |^2\Big)^{1/2}d\nu(t)
\lc 2^{\frac{jd}{2}}
\qquad \text{and}\qquad
h_k^{z'}(y')\,\lc 2^{\frac{kd}{2}}\,
\Ee
and, by the Cauchy--Schwarz inequality, we also have
\Be\label{CSforhnu}
\int_1^2 \sum_n
|h_{j,k,t}^{z,n}(y) | d\nu(t) \lc 2^{\frac{jd}{2}}
2^{\frac{j-k}{2}}.\Ee
Altogether, using \eqref{orthlemmaappl}   and \eqref{hfunctionbounds},
\begin{multline*}\label{GjGksec}
|\inn{V_j}{V_k}|\,\\\lc 2^{-j\frac{d-1}{2}}
\sum_{z\in \cE_j^\la}\sum_n\int_1^2
\int_{|y-z|\le C}
h_{j,k,t}^{z,n}(y)
\, dy\, 2^{kd/2} \#(\fZ_k(n,z,t))\,d\nu(t)\,.
\end{multline*}
Recall  that for every cube $Q$ of sidelength $2^k$ the set $\fZ_k(n,z,t)\cap Q$ contains at most $\la^p$ points.
Moreover, for each $z,n,t$ there are no more than
$O(2^{(j-k)(d-1)})$ dyadic cubes of sidelength~$2^k$ which intersect
$\fZ_k(z,n,t)$.
Thus \begin{equation*}\label{fZest}
\#(\fZ_k(n,z,t))
\lc \la^p 2^{(j-k)(d-1)}\,.
\end{equation*} This and
\eqref{CSforhnu}  yield,  for $k\le j-10$,
\begin{align*}\label{GjGksec}
|\inn{V_j}{V_k}|\,&\lc  2^{-j\frac{d-1}{2}}
\sum_{z\in \cE_j^\la}
\int_{|y-z|\le C}
\int_1^2\sum_{n}
h_{j,k,t}^{z,n}(y) d\nu(t)
\, dy\, 2^{kd/2} \la^p 2^{(j-k)(d-1)}\,
\\&\lc
2^{-j\frac{d-1}{2}}
\#(\cE_j^\la)
2^{\frac{jd}{2}}
2^{\frac{j-k}{2}} 2^{\frac{kd}{2}} \la^p 2^{(j-k)(d-1)}\,
\lc \la^{-p} 2^{-k\frac{d-1}{2}}2^{jd} \#(\cE_j^\la)\,.
\end{align*}
By summing a geometric series, we see that
$$\sum_{j> N(\la)}  \sum_{ N(\la)< k< j-10}
\big|\biginn{V_j}{V_k}\big| \lc  \la^p  2^{-N(\la)\frac{d-1}{2}}
\sum_{j\ge 1} 2^{jd} \#\cE_j,
$$
and by  the choice of $N(\la) =10 \log_2\la$, we have
$\la^p 2^{-N(\la)\frac{d-1}{2}}\lc \la^{p-5}\lc 1$. This
gives the desired estimate (indeed a better estimate)  for the
third term on the right-hand side of \eqref{Vjexp} and
 finishes the proof of \eqref{mainL2}.\qed

\subsection*{Lorentz space estimates}
We will use the following interpolation lemma 
in which we allow any $d>0$; this is the only place  where $d$
does not necessarily denote the dimension.

\begin{lemma} \label{weightedinterpol}
Let  $1\le p_0<p_1$, $d>0$,  and, for $j\in \bbN$, let  $S_j$ be an operator acting on functions on a measure
space  $(\cM,\mu)$ with values in a Banach space $\cB$. Suppose that  the inequality
\Be\label{strongassumption}
\Big\|\sum_{j\ge 1} S_j g_j\Big\|_{p_i} \le M_i
\Big(\sum_{j\ge 1} 2^{jd}\big\| g_j\big\|_{L^{p_i}(\cB)}^{p_i}
\Big)^{1/p_i}
\Ee
holds for $i=0,1$. Then for $p_0<p<p_1$, $\frac 1p=\frac{1-\vartheta}{p_0}+\frac{\vartheta}{p_1}$, and $p\le q\le \infty$,
$$
\Big\|\sum_{j\ge 1} 2^{-jd/p} S_j f_j \Big\|_{L^{p,q}} \le C_{p,q} M_0^{1-\theta}M_1^\theta
 \Big\|\Big(\sum_{j\ge 1}|f_j|_\cB^q\Big)^{1/q}\Big\|_{L^p}
$$
with $q=\infty$ interpreted as usual by taking a supremum.
\end{lemma}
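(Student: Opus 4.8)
The plan is to recognise hypothesis \eqref{strongassumption} as a pair of strong-type bounds for a \emph{single} operator acting between $\cB$-valued Lebesgue spaces over a suitably weighted measure, to interpolate by the real method, and then to unwind the resulting spaces. Introduce on $\bbN$ the measure $\kappa_d$ with $\kappa_d(\{j\})=2^{jd}$, put $\nu=\mu\times\kappa_d$, and for a $\cB$-valued function $g=\{g_j\}_{j\ge1}$ on $\cM\times\bbN$ set $Tg=\sum_{j\ge1}S_j g_j$. Since $\|g\|_{L^{p_i}(\nu;\cB)}^{p_i}=\sum_{j\ge1}2^{jd}\|g_j\|_{L^{p_i}(\cB)}^{p_i}$, assumption \eqref{strongassumption} says precisely that $T$ maps $L^{p_i}(\nu;\cB)$ into $L^{p_i}(\cM,\mu)$ with norm at most $M_i$, for $i=0,1$. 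Applying the real interpolation theorem (the $(\theta,q)$-method) to the couples $\bigl(L^{p_0}(\nu;\cB),L^{p_1}(\nu;\cB)\bigr)$ and $\bigl(L^{p_0}(\mu),L^{p_1}(\mu)\bigr)$, one gets that $T$ maps $\bigl(L^{p_0}(\nu;\cB),L^{p_1}(\nu;\cB)\bigr)_{\theta,q}$ into $\bigl(L^{p_0}(\mu),L^{p_1}(\mu)\bigr)_{\theta,q}$ with norm at most $C_{p,q}M_0^{1-\theta}M_1^{\theta}$.

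Next I would identify the two interpolation spaces. For the target this is the classical identification $\bigl(L^{p_0},L^{p_1}\bigr)_{\theta,q}=L^{p,q}$. For the source, since the \emph{same} Banach space $\cB$ and the \emph{same} measure $\nu$ appear at both endpoints, a short computation shows that the $K$-functional only sees the modulus:
\[
K\bigl(t,g;L^{p_0}(\nu;\cB),L^{p_1}(\nu;\cB)\bigr)=K\bigl(t,|g|_{\cB};L^{p_0}(\nu),L^{p_1}(\nu)\bigr).
\]
The bound $K(t,g)\le K(t,|g|_\cB)$ follows from the polar decomposition $g_i=\phi_i\,g/|g|_\cB$ of any splitting $|g|_\cB=\phi_0+\phi_1$ into nonnegative functions, and the reverse inequality follows by truncating a splitting $g=g_0+g_1$. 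Hence $\bigl(L^{p_0}(\nu;\cB),L^{p_1}(\nu;\cB)\bigr)_{\theta,q}=L^{p,q}(\nu;\cB)$, the space of $g$ with $\bigl\||g|_\cB\bigr\|_{L^{p,q}(\nu)}<\infty$, so that $T:L^{p,q}(\nu;\cB)\to L^{p,q}(\mu)$ with the stated norm.

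It remains to feed in the right input: applying the previous bound to $g=\{2^{-jd/p}f_j\}$, the lemma follows once we prove
\[
\bigl\|\,(x,j)\mapsto 2^{-jd/p}|f_j(x)|_{\cB}\,\bigr\|_{L^{p,q}(\mu\times\kappa_d)}\;\le\;C_{p,q,d}\,\Bigl\|\Bigl(\textstyle\sum_{j\ge1}|f_j|_{\cB}^q\Bigr)^{1/q}\Bigr\|_{L^p(\mu)}.
\]
I would prove this by the associate-space (Köthe) duality of Lorentz spaces: test against $G\ge0$ with $\|G\|_{L^{p',q'}(\mu\times\kappa_d)}\le1$, use $2^{jd}\cdot2^{-jd/p}=2^{jd/p'}$, and apply Hölder in $j$ with exponents $(q,q')$ and then in $x$ with exponents $(p,p')$. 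This reduces the matter to the pointwise-in-$x$ embedding
\[
\sum_{j\ge1}2^{jdq'/p'}|a_j|^{q'}\;\lesssim\;\|a\|_{L^{p',q'}(\kappa_d)}^{q'},
\]
which holds because $\kappa_d$ is a lacunary (geometric) measure: for every finite $E\subset\bbN$ one has $\sum_{j\in E}2^{jdq'/p'}\lesssim 2^{(\max E)dq'/p'}\le\kappa_d(E)^{q'/p'}$, and inserting this into the standard layer-cake expression $\|a\|_{L^{p',q'}(\kappa_d)}^{q'}\approx\int_0^\infty\bigl(\lambda\,\kappa_d(\{|a|>\lambda\})^{1/p'}\bigr)^{q'}\tfrac{d\lambda}{\lambda}$ gives the claim (the case $q=\infty$ is identical, with $q'=1$ and $L^{p',1}(\kappa_d)\hookrightarrow\ell^1(2^{jd/p'})$).

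The main obstacle is this last estimate. The weight $2^{-jd/p}$ carries the \emph{interpolated} exponent $p$, so it cannot be absorbed into the $S_j$ or into an endpoint isometry, and the right-hand side involves an $\ell^q$-norm in $j$ rather than the $\ell^p$-norm that would make it an honest Lebesgue norm; it is precisely the geometric spacing of the masses $2^{jd}$ that reconciles the two. The remaining ingredients — the reformulation, the interpolation theorem, and the identification of the interpolation spaces — are routine.
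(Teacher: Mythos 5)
Your overall strategy is the same as the paper's: recast \eqref{strongassumption} as strong $(p_i,p_i)$ bounds for the single operator $\{g_j\}\mapsto\sum_j S_jg_j$ on the weighted product measure $\nu=\mu\times\kappa_d$, interpolate by the real method (your $K$-functional identification of the vector-valued interpolation space is correct), and reduce the lemma to the embedding $\big\|\{2^{-jd/p}f_j\}\big\|_{L^{p,q}(\nu;\cB)}\lc\big\|(\sum_j|f_j|_\cB^q)^{1/q}\big\|_{L^p(\mu)}$, which is exactly \eqref{LorentzversusLq}. Where you diverge is in the proof of this embedding, and there your reduction is incomplete. After testing against $G\ge0$ with $\|G\|_{L^{p',q'}(\nu)}\le1$ and applying H\"older in $j$ and then in $x$, you are left with the factor $\big\|\big(\sum_j 2^{jdq'/p'}G(\cdot,j)^{q'}\big)^{1/q'}\big\|_{L^{p'}(\mu)}$; your lacunarity estimate converts this into the \emph{iterated} norm $\big\|\,\|G(x,\cdot)\|_{L^{p',q'}(\kappa_d)}\big\|_{L^{p'}(\mu)}$, not into the product-space norm $\|G\|_{L^{p',q'}(\nu)}$ against which $G$ was normalized. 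So the matter does not reduce to the pointwise-in-$x$ embedding alone: you still need the Fubini-type inequality
\[
\Big(\int \big\|G(x,\cdot)\big\|_{L^{p',q'}(\kappa_d)}^{p'}\,d\mu(x)\Big)^{1/p'}\;\lc\;\|G\|_{L^{p',q'}(\mu\times\kappa_d)} .
\]
This is true precisely in your regime $q'\le p'$ (write $\|G(x,\cdot)\|_{L^{p',q'}}^{q'}\approx\sum_k 2^{kq'}\,\kappa_d(\{j:G(x,j)>2^k\})^{q'/p'}$ and apply the integral Minkowski inequality with exponent $p'/q'\ge1$ to the $k$-sum), but it is a genuine additional step, of the same order of difficulty as the one you did prove, and must be supplied; as written the proof has a hole at exactly this point.

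For comparison, the paper proves \eqref{LorentzversusLq} without duality: at $q=p$ it is an identity (the weights $2^{jd}$ and $2^{-jd}$ cancel, giving $\|\{f_j\}\|_{L^p(\ell^p(\cB))}$ exactly); at $q=\infty$ it is the elementary weak-type computation reproduced in the paper (for fixed $x$ the set of $j$ with $2^{-jd/p}\sup_k|f_k(x)|_\cB>\la$ has $\kappa_d$-measure at most $\la^{-p}\sup_k|f_k(x)|_\cB^p$ by summing a geometric series); and the intermediate $p\le q\le\infty$ follow by interpolation with $p$ fixed. That route is shorter and avoids both the associate-space duality and the Fubini step. Your argument is correct once the missing inequality above is stated and proved.
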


\begin{proof}
Let $\mu^d$ denote the measure on $\bbN\times \cM$ given by
$$\mu^d(E)= \sum_{j\ge 1} 2^{jd} \int_{x:(j,x)\in E} d\mu\,.$$
By real interpolation of the assumptions \eqref{strongassumption}
we have
 \Be\label{maininterpol2} \Big\|\sum_{j\ge 1} S_jg_j\Big\|_{L^{p,q}} \le
C_{p,q} M_0^{1-\theta}M_1^\theta
\big\|\{g_j\}
\big\|_{L^{p,q}(\mu^d,\cB)} \,.
\Ee
We may apply this with $g_j= 2^{-jd/p} f_j$ and then our assertion  follows
 from the inequality
\Be\label{LorentzversusLq}
\big\|\{2^{-j d/p} f_j\}
\big\|_{L^{p,q}(\mu^d,\cB)}  \le \big\|\{f_j\}\big\|_{L^p(\ell^q(\cB))}\,.
\Ee
 The case for $p=q$ is  immediate. We also have
\begin{align*}
\mu^d\big(\big\{(j,x):  2^{-\frac{j d}{p}}
|f_j(x)|_\cB>\la\big\}\big) \le \mu^d\big(\big\{(j,x)\,:\,
2^{-\frac{j d}{p}} \sup_k|f_k(x)|_\cB>\la\big\}\big)&
\\
\qquad\qquad= \int \sum_{\substack {j\,:\, 2^{jd}<\\ \sup_k|f_k(x)|_\cB^p
\la^{-p}}} 2^{jd}\, dx
\,\le \,\la^{-p}\int \sup_k|f_k(x)|_\cB^p \, dx,&
\end{align*}
which yields \eqref{LorentzversusLq} for $q=\infty$.
By complex  interpolation (with fixed $p$)  we obtain
 \eqref{LorentzversusLq} for $p\le q\le\infty$.
\end{proof}

As an immediate consequence of
Lemma \ref{weightedinterpol} we obtain
a Lorentz space version of  Proposition \ref{main2} which is the main ingredient in the proof 
of Theorem \ref{maxmult}.
\begin{corollary}\label{mainlor2}
Let $1<p<\frac{2(d+1)}{d+3}$ and $p\le q\le \infty$. Then
\begin{multline*}
\Big\|\sum_{j\ge 1}2^{-\frac{j d}{p}}\int_1^2\int_{I_j}
 \psi*\eta*
\sigma_{rt}*
F_{j,t}(r,\cdot) \, dr\,d\nu(t)
\Big\|_{L^{p,q}} \\
\lc \Big\|\int_1^2\Big(\sum_{j\ge 1}
|F_{j,t}|_{\cH}^q\Big)^{1/q}d\nu(t)\Big\|_p\,.
\end{multline*}
\end{corollary}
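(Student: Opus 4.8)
The plan is to obtain Corollary \ref{mainlor2} as a direct application of the interpolation Lemma \ref{weightedinterpol} to the operators furnished by Proposition \ref{main2}. Set $(\cM,\mu)=(\bbR^d,dx)$ and let $\cB=L^1([1,2],d\nu;\cH)$ be the Banach space of $\cH$-valued $\nu$-integrable functions on $[1,2]$, with $\cH=L^2(\bbR_+,\tfrac{dr}{r})$ as in the text. To a $\cB$-valued function $g$ on $\bbR^d$, whose value $g(y)\in\cB$ we write as $(t,r)\mapsto g_t(r,y)$, associate
\[
S_j g \;=\; \int_1^2\!\int_{I_j}\psi*\sigma_{rt}*\eta*g_t(r,\cdot)\,dr\,d\nu(t).
\]
Under this identification $\|g_j\|_{L^{s}(\cM,\mu;\cB)}=\|F_j\|_{L^{s}(L^1(\cH))}$ whenever $g_j(y)(t,r)=F_{j,t}(r,y)$, so the estimate of Proposition \ref{main2} is exactly the hypothesis \eqref{strongassumption} of Lemma \ref{weightedinterpol} for the family $\{S_j\}$.

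Now fix $p\in(1,\tfrac{2(d+1)}{d+3})$ and $q\in[p,\infty]$, and choose $p_1$ with $p<p_1<\tfrac{2(d+1)}{d+3}$. Applying Proposition \ref{main2} at the exponents $p_0=1$ (the trivial case) and $p_1$ gives \eqref{strongassumption} for the above $S_j$ with constants $M_0,M_1\lesssim 1$. Lemma \ref{weightedinterpol} then yields
\[
\Big\|\sum_{j\ge1}2^{-jd/p}S_jf_j\Big\|_{L^{p,q}(\bbR^d)}\;\lesssim\;\Big\|\Big(\sum_{j\ge1}|f_j|_{\cB}^q\Big)^{1/q}\Big\|_{L^p(\bbR^d)}.
\]
Since convolution is commutative, with $F_{j,t}(r,y)=f_j(y)(t,r)$ the left-hand side is precisely the left-hand side of the asserted inequality.

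It remains to dominate the right-hand side by $\big\|\int_1^2\big(\sum_j|F_{j,t}|_{\cH}^q\big)^{1/q}d\nu(t)\big\|_p$. For fixed $x$ we have $|f_j(x)|_{\cB}=\int_1^2|F_{j,t}(x)|_{\cH}\,d\nu(t)$, where $|F_{j,t}(x)|_\cH=(\int_0^\infty|F_{j,t}(r,x)|^2\tfrac{dr}{r})^{1/2}$, and Minkowski's inequality for the $\ell^q$-norm against the integral in $d\nu(t)$ (valid since $q\ge1$) gives
\[
\Big(\sum_{j\ge1}|f_j(x)|_{\cB}^q\Big)^{1/q}\le\int_1^2\Big(\sum_{j\ge1}|F_{j,t}(x)|_{\cH}^q\Big)^{1/q}d\nu(t);
\]
taking $L^p(dx)$-norms finishes the proof. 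There is essentially no obstacle beyond the bookkeeping: the $L^1(d\nu)$-structure in the second variable of $F$ must be absorbed into the coefficient space $\cB$ rather than into the underlying measure space $\cM$, and the final passage uses only Minkowski's inequality (with $q=\infty$ interpreted via suprema).
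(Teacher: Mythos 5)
Your proposal is correct and is exactly the argument the paper intends: the corollary is stated there as an immediate consequence of Lemma \ref{weightedinterpol} applied to Proposition \ref{main2}, and your choice of $\cB=L^1(d\nu;\cH)$, the endpoints $p_0=1$ and $p_1\in(p,\tfrac{2(d+1)}{d+3})$, and the final Minkowski step to move the $\nu$-integral outside the $\ell^q$-norm supply precisely the bookkeeping the paper leaves implicit.
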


\subsubsection*{A preparatory result} For the proof of Theorems \ref{sqthm} and  \ref{fmthm}
we shall need a more technical variant of the
corollary which is compatible  with atomic decompositions. 
In what follows we let $\nu$ be Dirac measure at $t=1$ so that the integrals in $t$ disappear.
Let $\ell\ge 1$ and for $\fz\in \bbZ^d$ let
$$R^{\ell}_{\fz}=\{x:2^{\ell}\fz_i
\le x_i<2^{\ell}(\fz_i+1),\,\, i=1,\dots,d\}\,;$$
these sets form a grid of disjoint cubes with sidelength $2^{\ell}$ covering $\bbR^d$. In the following proposition 
we use the conclusion of Proposition \ref{main2} as our hypothesis.

\begin{proposition} \label{technicalprop}
Suppose that, for some $p_1\in (1,2)$,
$$
\Big\|\sum_{j\ge \ell+2} \int_{I_j} \psi*\sigma_r* \eta* F_{j}(r,\cdot) \,
dr\Big\|_{p_1} \lc\Big(\sum_{j\ge 1} 2^{jd}
\|F_{j}\|_{L^{p_1}(\cH)}^{p_1}
\Big)^{1/{p_1}}.
$$
Let
$b_{j,\fz}\in L^2(\cH)$ with $\|b_{j,\fz}\|_{L^2(\cH)}
\le 1$, let
$\beta_j(\fz)\in \bbC$
and define
$$S_j\beta_j(x)=\sum_\fz \beta_j(\fz) \Big(\psi*\eta*
 \int_{I_j} \sigma_r*
\big(\chi_{R^{\ell}_{\fz}}
b_{j,\fz}(r,\cdot) \big)\,dr \Big) \,
$$
Then, for $1<p<p_1$ and $p\le q\le \infty$,
$$
\Big\|\sum_{j\ge  \ell+2} 2^{-jd/p}S_j\beta_j
\Big\|_{L^{p,q}}\\
 \le C_p 2^{\ell d(1/p-1/2) -\eps(p))} \Big(\sum_{\fz\in\mathbb{Z}^d}
\Big(\sum_{j\ge 1}|\beta_{j}(\fz)|^q\Big)^{p/q}\Big)^{1/p}\,,
$$
where  $\eps(p)=\frac{(d-1)p_1'}{2}(\frac 1p-\frac 1{p_1})$.
\end{proposition}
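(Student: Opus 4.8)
The idea is to deduce Proposition \ref{technicalprop} from the hypothesis (which is just the conclusion of Proposition \ref{main2} with $p_1$ in place of the endpoint exponent) by interpolating between two estimates and then invoking Lemma \ref{weightedinterpol}. The two endpoints are: a favorable $L^{p_1}$ bound, obtained directly from the hypothesis, and a crude $L^2$ (or $L^1$-type) bound obtained from Lemma \ref{STlemma} together with the support localization. The gain $2^{\ell d(1/p-1/2)-\eps(p)}$ over the trivial $2^{\ell d(1/p-1/2)}$ will come precisely from the fact that at the $p_1$-endpoint the constant is only $2^{\ell d(1/p_1-1/2)}$ while at $L^2$ there is an extra orthogonality/decay factor; interpolating the two, the exponent of $2^\ell$ improves by the stated $\eps(p)=\frac{(d-1)p_1'}{2}(\frac1p-\frac1{p_1})$.

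First I would set up the operator. For fixed $\ell$, regard $S_j$ as acting on sequences $\beta_j:\bbZ^d\to\bbC$, writing each input as a superposition over $\fz$ of pieces supported in $R^\ell_\fz$, each normalized in $L^2(\cH)$. Since $\chi_{R^\ell_\fz}b_{j,\fz}$ is supported in a cube of sidelength $2^\ell$ and $j\ge \ell+2$, the output $\psi*\eta*\int_{I_j}\sigma_r*(\chi_{R^\ell_\fz}b_{j,\fz})\,dr$ is essentially supported on an annulus of radius $\approx 2^j$ and thickness $\approx 2^\ell$ centered at (the center of) $R^\ell_\fz$; this geometric fact, already used in the proof of Proposition \ref{main2}, is what controls overlaps.

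Next, the two endpoint inequalities. For the first, I would feed $F_j = \sum_\fz \beta_j(\fz)\chi_{R^\ell_\fz}b_{j,\fz}$ into the hypothesis; because the $R^\ell_\fz$ are disjoint, $\|F_j\|_{L^{p_1}(\cH)}^{p_1} \lc 2^{\ell d}\sum_\fz|\beta_j(\fz)|^{p_1}$ after using $\|b_{j,\fz}\|_{L^2(\cH)}\le1$ and Hölder on each cube, giving
\[
\Big\|\sum_{j\ge\ell+2}S_j\beta_j\Big\|_{p_1}\lc\Big(\sum_j 2^{jd}2^{\ell d}\sum_\fz|\beta_j(\fz)|^{p_1}\Big)^{1/p_1}.
\]
For the second, I would exploit that $\psi*\sigma_r$ has a bounded Fourier transform of size $O(r^{(d-1)/2})$, cf. \eqref{Fsigmar}, so Lemma \ref{STlemma} applied on each cube $Q$ of sidelength $2^j$ (into which the outputs $S_j\beta_j$ decompose, each term supported in $Q^*$) yields an $L^2$ bound; here the separation of annuli centered at distinct $\fz$ with $|\fz-\fz'|\gg 2^\ell$ gives a decay factor from Lemma \ref{orthlemma}, and counting how many $\fz$ land in each $2^j$-cube ($\lesssim 2^{(j-\ell)d}$ of them, but only $\approx 2^{(j-\ell)(d-1)}$ contribute to a fixed output annulus) produces the extra power of $2^\ell$ that, relative to the trivial count, is the source of $\eps$. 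This gives a bound of the form $M_2\,2^{\ell d/2}2^{-c\ell(d-1)}$ times $(\sum_j 2^{jd}\sum_\fz|\beta_j(\fz)|^2)^{1/2}$ in $L^2$.

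Finally, with these two inequalities of the exact shape \eqref{strongassumption} (indexed set $\bbN\times\bbZ^d$, measure $\mu_d$ as in the proof of Proposition \ref{main2}, Banach space $\cH$), I would apply Lemma \ref{weightedinterpol} with the pair $(p_1,2)$ — note the lemma is stated to allow the abstract "$d$" to be the actual dimension — to pass to $L^{p,q}$ for $1<p<p_1$, $p\le q\le\infty$, absorbing the $2^{-jd/p}$ weight exactly as in the lemma. Tracking the two $2^\ell$-exponents $\ell d/p_1$ and $\ell(d/2 - c(d-1))$ through the interpolation with parameter $\theta$ determined by $\frac1p=\frac{1-\theta}{p_1}+\frac\theta2$ gives the claimed exponent $\ell d(1/p-1/2)-\eps(p)$ with $\eps(p)=\frac{(d-1)p_1'}{2}(\frac1p-\frac1{p_1})$; this is a short bookkeeping computation once $c$ is pinned down to $(d-1)/2$ in the right normalization.

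\textbf{Main obstacle.} The delicate point is the $L^2$ endpoint: one must carefully combine the cube-localization of Lemma \ref{STlemma} with the annular separation of Lemma \ref{orthlemma}, and correctly count both the number of $\fz$'s feeding a fixed $2^j$-cube and the number of $2^\ell$-cubes meeting a fixed annulus, so that the decay $M^{-(d-1)/2}\approx 2^{-j(d-1)/2}$ beats the multiplicity and leaves the precise surplus factor $2^{\ell(d-1)/2}$ (equivalently, a loss of only $2^{\ell d/2}$ rather than $2^{\ell d}$). Everything else — the disjointness estimate at $p_1$, and the interpolation via Lemma \ref{weightedinterpol} — is routine; getting the constant in $\eps(p)$ exactly right is purely a matter of not dropping powers of $2^\ell$ in that counting step.
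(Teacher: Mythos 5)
Your overall architecture --- prove an $L^{p_1}$ bound from the hypothesis via disjointness of the cubes $R^{\ell}_{\fz}$ and H\"older, prove a second endpoint with a better power of $2^{\ell}$, and feed both into Lemma \ref{weightedinterpol} --- is the right one, and your first endpoint is essentially correct (though the constant from H\"older on each cube should be $2^{\ell d(1/p_1-1/2)}$, i.e.\ $2^{\ell d(1-p_1/2)}$ inside the $p_1$-th power, not $2^{\ell d}$). The fatal problem is your choice of the second endpoint. You propose to interpolate the pair $(p_1,2)$ with $\tfrac1p=\tfrac{1-\theta}{p_1}+\tfrac{\theta}{2}$; for $\theta\in(0,1)$ this only produces exponents $p\in(p_1,2)$, whereas the proposition is claimed for $1<p<p_1$, which lies outside the segment $[p_1,2]$. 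Real interpolation (and Lemma \ref{weightedinterpol} explicitly requires $p_0<p<p_1$) cannot extrapolate below the smaller endpoint; you would need $\theta<0$, so no amount of care in the $L^2$ counting can rescue the argument as written.

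The correct second endpoint is $L^1$, not $L^2$. The ingredients you list --- the annular support of $\psi*\sigma_r*(\chi_{R^{\ell}_{\fz}}b_{j,\fz})$ (radius $\approx 2^j$, width $\approx 2^{\ell}$) and the Fourier bound \eqref{Fsigmar} --- are exactly the right ones, but deployed at $L^1$: by the triangle inequality (no orthogonality between different $\fz$ is needed, so Lemma \ref{orthlemma} plays no role here), Cauchy--Schwarz on the annulus of measure $\approx 2^{\ell}2^{j(d-1)}$, Plancherel together with $\|\widehat{\psi*\sigma_r}\|_\infty=O(r^{(d-1)/2})$, and Cauchy--Schwarz in $r$ over $I_j$, one obtains
$\big\|\sum_{j}S_j\beta_j\big\|_1\lc 2^{\ell/2}\sum_j 2^{jd}\sum_{\fz}|\beta_j(\fz)|$,
a gain of $2^{\ell(d-1)/2}$ over the trivial $2^{\ell d/2}$. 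Interpolating this with the $L^{p_1}$ bound via Lemma \ref{weightedinterpol} (with $p_0=1$) gives the conclusion for all $1<p<p_1$, and the exponent bookkeeping yields precisely $\eps(p)=\tfrac{d-1}{2}(1-\theta)=\tfrac{(d-1)p_1'}{2}\big(\tfrac1p-\tfrac1{p_1}\big)$ with $\theta=p_1'/p'$.
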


\begin{proof}
We argue as in  \cite{hns2},  Prop. 3.1.
First note that
\Be
\label{p1bound}
\Big\|\sum_{j\ge \ell+2}S_j\beta_j \Big\|_{p_1} \lc
2^{\ell d(1/p_1-1/2)}
\Big(\sum_{j\ge1} 2^{jd} \sum_{\fz}|\beta_j(\fz)|^{p_1}\Big)^{1/p_1}\,.
\Ee
Indeed, by hypothesis the left-hand side is dominated by a constant times
\begin{align*}
&\Big(\sum_{j\ge1} 2^{jd} \Big\|
\sum_\fz \beta_j(\fz)
\chi_{R^{\ell}_{\fz}}
 b_{j,\fz}\Big\|_{L^{p_1}(\cH)}^{p_1}
 \Big)^{1/p_1}
\\&\le \Big(\sum_{j\ge1} 2^{jd} \sum_{\fz}|\beta_j(\fz)|^{p_1}\big\|
\chi_{R^{\ell}_{\fz}}   b_{j,\fz}
\big\|_{L^{p_1}(\cH)}^{p_1}\Big)^{1/p_1}\,
\end{align*}
and after using  H\"older's inequality on each $R^\ell_\fz$ and the $L^2$ normalization of $b_{j,\fz}$ we obtain \eqref{p1bound}.

There is a better $L^1$ bound. Note that for $r\approx 2^j$ the term
$\psi*\sigma_r*b_{j,\fz}(r,\cdot) $ is supported on an annulus with radius
$\approx  2^j$ and width $2^{\ell}$. We use the Cauchy--Schwarz inequality  on this annulus  and then \eqref{Fsigmar} and estimate
\begin{align*}\notag
&\Big\|\sum_{j\ge \ell+2}
\psi*\eta* \sum_\fz
\beta_j(\fz) \int_{I_j} \sigma_r*
(b_{j,\fz}(r,\cdot)\chi_{R^{\ell}_{\fz}})  \,dr
\Big\|_{1} \\\notag &\lc
\sum_{j\ge \ell+2}\sum_\fz  |\beta_j(\fz) | \int_{I_j}
\|\psi*
\sigma_r*
(b_{j,\fz}(r,\cdot)\chi_{R^{\ell}_{\fz}})\|_1  \,
dr
 \\
\notag &\lc
\sum_{j\ge \ell+2}\sum_\fz |\beta_j(\fz) |\int_{I_j} (2^{\ell} 2^{j(d-1)})^{1/2}
\|\psi*
\sigma_r*
(b_{j,\fz}(r,\cdot)\chi_{R^{\ell}_{\fz}}
)\|_2  \,
dr\\&\lc 2^{\ell/2}
\sum_{j\ge 1}2^{j(d-1)}\sum_\fz |\beta_j(\fz) | \int_{I_j} \|b_{j,\fz}(r,\cdot)\|_2 dr \,,
\end{align*}
and  by Cauchy--Schwarz on $I_j$
 and the normalization assumption on $b_{j,\fz}$ we get
\Be\Big\|\sum_{j\ge \ell+2}
S_j\beta_j
\Big\|_{1} \lc
2^{\ell/2}\sum_{j\ge 1} 2^{jd}\sum_\fz |\beta_{j}(\fz)|\,.
\label{L1bd}\Ee
Now   Lemma \ref{weightedinterpol} is used
to interpolate   \eqref{p1bound} and \eqref{L1bd}
and the assertion  follows.
\end{proof}

\section{Proof of Theorem \ref{maxmult}}
We start with a simple fact on  Besov spaces, namely if  $\zeta$ is a
$C^\infty$ function supported on a compact subinterval of $(0,\infty)$ then
\Be\label{besovradial} \|\zeta(|\cdot|) g(|\cdot|)\|_{B^2_{\alpha,q}(\bbR^d)}\lc \|g\|_{B^2_{\alpha,q}(\bbR)}\,,\quad \alpha>0.\Ee
To see this note that the corresponding inequality with Sobolev spaces $L^2_{\alpha}$, $\alpha=0,1,2,\dots$ 
is true by direct computation, and then \eqref{besovradial} follows by real interpolation.

Next if $\cF^{-1}[m(|\cdot|](x)= \kappa(|x|)$ we can use polar coordinates to see that
\Be \label{kappabound}
\big\|m(|\cdot|)\|_{B^2_{\alpha,q}(\bbR^d)} \approx
\Big(
\sum_{j=0}^\infty \Big[ \int_{I_j}|\kappa(r)|^2 r^{2\alpha+d-1} dr\Big]^{q/2}\Big)^{1/q}\,;
\Ee here, as in \S\ref{convsphmeas},  $I_j=[2^j, 2^{j+1}]$ for $j\ge 1$, and  $I_0=(0,2]$.

We shall first prove a dual version of a bound for a maximal operator where the dilations are restricted to $[1,2]$.
\begin{proposition} \label{dualmax}
Let $d\ge 2$, $1<p<\frac{2(d+1)}{d+3}$, $\alpha=d(\frac1p-\frac12)$, $p\le q\le \infty$.
Then, for $m\in B^2_{\alpha,q}$ with support in $(1/2,2)$, 
\Be\label{singlescaleest}
\Big\|\int_{1}^2 T_{m(t\cdot)} f_t\,dt\Big\|_{L^{p,q}}
\lc \|m\|_{B^2_{\alpha,q}} \Big\|\int_1^2|f_t|\,dt\Big\|_{p}.
\Ee
\end{proposition}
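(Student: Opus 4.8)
The strategy is to reduce \eqref{singlescaleest} to Corollary~\ref{mainlor2} (equivalently Proposition~\ref{main2} combined with Lemma~\ref{weightedinterpol}) by expressing $T_{m(t\cdot)}$ as a superposition of the operators $\psi*\sigma_{rt}*\eta*(\cdot)$ appearing there. First I would fix a Littlewood--Paley type decomposition adapted to the kernel: writing $\cF^{-1}[m(|\cdot|)](x)=\kappa(|x|)$, formula \eqref{kappabound} identifies $\|m\|_{B^2_{\alpha,q}}$ with $\big(\sum_j[\int_{I_j}|\kappa(r)|^2 r^{2\alpha+d-1}\,dr]^{q/2}\big)^{1/q}$, with $\alpha=d(1/p-1/2)$ so that $2\alpha+d-1 = 2d/p - 1$. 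Since $m$ is supported in $(1/2,2)$ and radial, $T_{m(t\cdot)}f = f * K_t$ where $K_t(x) = t^d \kappa(t|x|)$ has the form of a (weighted) average of dilated surface measures $\sigma_{rt}$ against $\kappa$; concretely $K_t * g(x) = c\int_0^\infty \kappa(rt)\, (rt)^{d-1} (\sigma_{rt} * g)(x)\, dr \cdot t$ after passing to polar coordinates. The point is that this exhibits $\int_1^2 T_{m(t\cdot)}f_t\,dt$ as exactly the type of expression governed by Proposition~\ref{main2}, once we insert the harmless smooth cutoffs $\psi,\eta$ (which can be arranged by a standard approximation-of-identity argument since $m$ is smoothly truncated away from the origin and $\kappa$ is Schwartz-tail, so the $j=0$ term and the convolution error from replacing $\delta$ by $\psi*\eta*\delta$-type factors are controlled by the Hardy--Littlewood maximal function, hence bounded on $L^p$).

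Next I would carry out the dyadic splitting in $r$: set $F_{j,t}(r,x) = \kappa(rt)\,\chi_{I_j}(r)\, f_t(x)$ (up to the $r^{d-1}$ weight bookkeeping), so that $\int_1^2 T_{m(t\cdot)}f_t\,dt$ becomes, modulo error terms, $\sum_{j\ge 1}\int_1^2\int_{I_j}\psi*\eta*\sigma_{rt}*F_{j,t}(r,\cdot)\,dr\,dt$. To match the normalization in Corollary~\ref{mainlor2} I would factor $F_{j,t} = 2^{jd/p}\,\tilde F_{j,t}$ where $\tilde F_{j,t}(r,x) = 2^{-jd/p}\kappa(rt)\chi_{I_j}(r)f_t(x)$; the $\cH = L^2(dr/r)$ norm of $\tilde F_{j,t}(\cdot,x)$ is then $2^{-jd/p}\big(\int_{I_j}|\kappa(rt)|^2\,dr/r\big)^{1/2}|f_t(x)|$, and after the change of variables $rt\mapsto \rho$ and using $t\sim 1$ this is comparable to $2^{-jd/p}\big(\int_{I_j}|\kappa(\rho)|^2\,d\rho\big)^{1/2}|f_t(x)| \sim 2^{-jd/p} \cdot 2^{-j(d/p - 1/2)} a_j |f_t(x)|$ where $a_j := \big(\int_{I_j}|\kappa(r)|^2 r^{2d/p-1}\,dr\big)^{1/2}$, so that $\|a_j\|_{\ell^q} \approx \|m\|_{B^2_{\alpha,q}}$ by \eqref{kappabound}. (I should track the $r^{d-1}$ versus $r^{2d/p-1}$ weights carefully here; they differ, and the discrepancy is exactly absorbed into the $2^{jd}$ weight in the hypothesis of Corollary~\ref{mainlor2} — this is the bookkeeping that makes the exponents close.) Then Corollary~\ref{mainlor2} with $\nu = $ Lebesgue measure on $[1,2]$ gives
$$\Big\|\sum_{j\ge 1} 2^{-jd/p}\int_1^2\int_{I_j}\psi*\eta*\sigma_{rt}*F'_{j,t}(r,\cdot)\,dr\,dt\Big\|_{L^{p,q}} \lc \Big\|\int_1^2\Big(\sum_j |F'_{j,t}|_\cH^q\Big)^{1/q} dt\Big\|_p,$$
and plugging in $F'_{j,t}$ so that $|F'_{j,t}|_\cH \sim a_j |f_t(x)|$ yields the right-hand side $\lc \|a_j\|_{\ell^q}\big\|\int_1^2 |f_t|\,dt\big\|_p = \|m\|_{B^2_{\alpha,q}}\big\|\int_1^2|f_t|\,dt\big\|_p$, which is \eqref{singlescaleest}.

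\textbf{Main obstacle.} I expect the principal difficulty to be \emph{not} the application of Corollary~\ref{mainlor2} itself but the reduction to it: justifying the insertion of the smooth factors $\psi * \eta$ in place of the bare identity, and controlling the low-frequency (``$j=0$'') and rapidly-decaying-tail contributions of the kernel $\kappa$. Since $m$ is supported in $(1/2,2)$, $\widehat{K_t}$ is smooth and compactly supported away from infinity but $K_t$ itself only has Schwartz-type decay, so the piece of $K_t$ living at scales $|x|\lesssim 1$ is an $L^1$ function with $L^1$ norm $\lesssim \|m\|_{B^2_{\alpha,q}}$ (via Cauchy--Schwarz against the weight, using $\alpha>0$ and $q\le\infty$), hence contributes an operator dominated by $\|m\|_{B^2_{\alpha,q}}\,\mathcal{M}$ where $\mathcal M$ is the Hardy--Littlewood maximal operator applied to $\int_1^2|f_t|\,dt$ — bounded on $L^p$, a fortiori into $L^{p,q}$ for $q\ge p$ on the relevant range. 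The complementary high-$|x|$ part is where the oscillation of $\widehat\sigma_r$ is genuinely exploited, and there one needs that $\widehat\psi$ vanishes to high order at the origin (as assumed in \S\ref{convsphmeas}) to legitimately write the kernel as a Fourier-side product involving $u(|\xi|)$; arranging this amounts to dividing $m(|\xi|)$ by $u(|\xi|)$ on the support of $m$ — permissible since $u$ is bounded below there — and the quotient still lies in $B^2_{\alpha,q}$ with comparable norm because $u$ is smooth and nonvanishing on a neighborhood of $\mathrm{supp}\,m$, by \eqref{besovradial} and the algebra property of $B^2_{\alpha,q}$ for $\alpha>0$. Once these reductions are in place the estimate is a direct translation through Corollary~\ref{mainlor2}, and the constraint $p<\frac{2(d+1)}{d+3}$ (equivalently $p'>\frac{2(d+1)}{d-1}$, matching Theorem~\ref{maxmult}) enters solely through the hypothesis of that corollary.
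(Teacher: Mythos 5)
Your proposal follows essentially the same route as the paper's proof: expand the radial kernel in polar coordinates as a superposition of the spherical measures $\sigma_{rt}$, insert the smooth factors $\psi*\eta$ (the paper does this more simply by writing $T_{m(t\cdot)}f_t=T_{m(t\cdot)}(\phi*f_t)$ with $\widehat\phi=1$ on the support of $m(t\cdot)$ and $\phi=\psi*\eta$, so no division by $u$ or algebra property of $B^2_{\alpha,q}$ is needed), take the tensor product $F_{j,t}(r,x)=2^{jd/p}\chi_{I_j}(r)\kappa(r)t^{1-d}f_t(x)$, and apply Corollary~\ref{mainlor2} with $\nu$ equal to Lebesgue measure on $[1,2]$, identifying $\big(\sum_j|F_{j,t}|_{\cH}^q\big)^{1/q}$ with $\|m\|_{B^2_{\alpha,q}}|f_t|$ via \eqref{kappabound}. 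The only point to fix is the polar-coordinate normalization: with the paper's unnormalized $\sigma_{rt}$ (total mass $\sim(rt)^{d-1}$) one has $T_{m(t\cdot)}f=\int_0^\infty\kappa(r)\,t^{1-d}\,\sigma_{rt}*f\,dr$ with no extra factor of $(rt)^{d-1}$, which is exactly what makes the weight $r^{2\alpha+d-1}=r^{2d/p-1}$ in \eqref{kappabound} emerge from $2^{jd/p}(\int_{I_j}|\kappa|^2\,dr/r)^{1/2}$.
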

\begin{proof}
Let 
$\phi$ be a radial $C^\infty$-function so that $\widehat \phi$ is supported 
 in $\{1/8\le|\xi|\le 8\}$ and equal to one in  $\{1/4\le |\xi|\le 4\}$.
Then 
$T_{m(t\cdot)} f_t=
T_{m(t\cdot)} (\phi*f_t)$ for $1\le t\le 2$.
Also
$$T_{m(t\cdot)} f 
=\int_0^\infty \kappa(r) t^{1-d} \sigma_{rt} *\phi* f \, dr, \,\,\,\text{ $1\le t\le 2$,}
$$
where $\kappa$ is bounded and smooth, and the right-hand side of 
\eqref{kappabound} is finite with $\alpha=d/p-d/2$. We may split $\phi=\psi*\eta$ where $\psi\in C^\infty_c$ 
with $\widehat \psi$ vanishing of high order at the origin. It then  suffices to show that
\begin{multline} \label{splittingterms}
\Big\|\int_1^2\int_{2}^\infty \kappa(r) t^{1-d} \psi*\sigma_{rt} *f_t \, dr\,dt
\Big\|_{L^{p,q}} \\
\lc \Big(\sum_{j=1}^\infty\Big(\int_{I_j}|\ka(r)|^2 r^{2d/p} \frac {dr}{r} \Big)^{q/2}\Big)^{1/q} 
\,\Big\|\int_1^2 |f_t| \, dt \Big\|_p
\end{multline}
This estimate follows by applying  Corollary \ref{mainlor2}.
Take $\nu$ to be  Lebesgue measure on $[1,2]$,  use the tensor product 
$$F_{j,t}(r,x)= 2^{jd/p} \chi_{I_j}(r)\ka(r) \, t^{1-d} f_t(x)$$ and observe that
$\|F_j\|_{L^p(L^1(\cH))}$ can be estimated by the right-hand side of  \eqref{splittingterms}.
\end{proof}

We also need a standard \lq orthogonality\rq \  estimate, in Lorentz spaces.
\begin{lemma}\label{Lpellp}
Let $\{\beta_k\}_{k\in \bbZ}$ a family of $L^1$-functions, satisfying

(i) $\sup_k\|\beta_k\|_{L^1(\bbR^d)}<\infty$,
 
(ii) $\sup_\xi \sum_{k\in \bbZ}|\widehat \beta_k(\xi)| <\infty$. 

\noindent  Then 
\begin{equation}\label{combscales}
\Big\|\sum_k \beta_k*f_k \Big\|_{L^{p,q}}
\lc \Big(\sum_k \|f_k\|_{L^{p,q}} ^p\Big)^{1/p}\,, \quad\text{$1<p<2$, \  $p\le q\le \infty$,}
\end{equation}
and 
\begin{equation}\label{dualcombscales}
\Big(\sum_k \|\beta_k*  f\|_{L^{p,q}}^p\Big)^{1/p}
\lc \|f\|_{L^{p,q}}\,, \quad\text{$2<p<\infty$, \  $1\le q\le p$.}
\end{equation}
Here the functions 
$\{f_k\}$ are allowed to have values in a 
Hilbert space $\sH$ (and $f$ may have  values in $\sH'$).
\end{lemma}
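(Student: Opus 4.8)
\textbf{Proof plan for Lemma \ref{Lpellp}.} The plan is to prove the two statements by a Calder\'on--Zygmund / interpolation argument, reducing the Lorentz-space bounds to an $L^2(\sH)$ estimate plus a weak-type $(1,1)$ estimate and then interpolating. First, observe that \eqref{combscales} and \eqref{dualcombscales} are dual to each other: if $\cU f = \sum_k \beta_k * f_k$ acting on $\ell^p(\sH)$-valued input and $\cU^* f = \{\widetilde\beta_k * f\}_k$ with $\widetilde\beta_k(x)=\overline{\beta_k(-x)}$, then assumptions (i), (ii) are symmetric under $\beta_k\mapsto\widetilde\beta_k$, so it suffices to prove \eqref{combscales}. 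By Plancherel and assumption (ii),
\[
\Big\|\sum_k \beta_k * f_k\Big\|_2^2 = \int \Big|\sum_k \widehat\beta_k(\xi)\widehat{f_k}(\xi)\Big|^2 d\xi
\lc \int \Big(\sum_k |\widehat\beta_k(\xi)|\,|\widehat{f_k}(\xi)|^2\Big)\Big(\sum_k|\widehat\beta_k(\xi)|\Big) d\xi
\lc \sum_k \|f_k\|_2^2,
\]
which is the case $p=q=2$ (with the obvious vector-valued modification). So the operator $\cU:\ell^2(L^2(\sH))\to L^2$ is bounded.

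Next I would establish the weak-type $(1,1)$ bound: $\cU$ maps $\ell^1(L^1(\sH))$ to $L^{1,\infty}$. This is the Benedek--Calder\'on--Panzone vector-valued Calder\'on--Zygmund theorem. The point is that $\cU$ is a convolution-type operator with operator-valued kernel $K=\{\beta_k\}_k$ (mapping $\sH^{\ell^1}$ to $\bbC$, or rather we view the whole family as a single kernel), and the relevant H\"ormander condition $\int_{|x|\ge 2|y|}\|K(x-y)-K(x)\|\,dx \lc 1$ must be verified. Here is where assumptions (i) and (ii) are used together through a Littlewood--Paley / Mikhlin-type estimate: one decomposes each $\beta_k$ according to the relative scales, but actually the cleanest route is to note that (i) gives $\sup_k\|\beta_k\|_1<\infty$ and (ii) gives $\sup_\xi\sum_k|\widehat\beta_k(\xi)|<\infty$, and these are exactly the hypotheses under which $\{\beta_k\}$ defines a Calder\'on--Zygmund kernel with values in $\mathcal B(\ell^1(\sH),\bbC)$ after the standard argument (test against atoms; the sum $\sum_k\beta_k*f_k$ of cancellation-carrying $L^1$ atoms is handled by the $L^2$ bound on the good part and the support/smoothness on the bad part). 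The smoothness needed for the H\"ormander condition comes from (ii) via the trick of writing $\widehat\beta_k(\xi)\phi(2^{-k}\xi)$-type localizations; concretely, since $\sum_k|\widehat\beta_k|\le C$ one shows the kernel of $\cU$ is a bounded function away from the diagonal with the correct decay. I expect this verification to be the main obstacle, so I would instead invoke the standard result (it appears in this exact form, e.g., in the literature on vector-valued singular integrals cited in the paper, such as \cite{feff}, \cite{hns}) rather than reprove it.

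Having the $L^2\to L^2$ and $L^1\to L^{1,\infty}$ bounds (in the appropriate vector-valued sense), Marcinkiewicz interpolation gives $\cU:\ell^p(L^p(\sH))\to L^p$ for $1<p<2$. To upgrade the target $\ell^p$-input to $\ell^q$ for $p\le q\le\infty$ and to pass to Lorentz spaces on the target, I would use real interpolation with a fixed second index: interpolating the estimate $\ell^{p_0}(L^{p_0}(\sH))\to L^{p_0}$ at two endpoints $p_0<p<p_1$ (both in $(1,2)$, obtained from the previous step) via the real method $(\cdot,\cdot)_{\theta,q}$ yields
\[
\Big\|\sum_k\beta_k*f_k\Big\|_{L^{p,q}} \lc \big\|\{f_k\}\big\|_{(\ell^{p_0}(L^{p_0}(\sH)),\,\ell^{p_1}(L^{p_1}(\sH)))_{\theta,q}},
\]
and the interpolation space on the right is continuously contained in $\ell^q(L^{p,q}(\sH))$; combined with the trivial embedding $\ell^p\hookrightarrow\ell^q$ for $q\ge p$ this gives \eqref{combscales}. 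This last bookkeeping step is essentially the same as the argument in the proof of Lemma \ref{weightedinterpol} above (passing between $L^{p,q}$ of a sum and $\ell^q$ of Lorentz norms), so I would cross-reference it rather than repeat the computation. Finally, \eqref{dualcombscales} follows by duality as noted, using $(L^{p,q})^* = L^{p',q'}$ and $(\ell^p)^*=\ell^{p'}$, together with the symmetry of the hypotheses under $\beta_k\mapsto\widetilde\beta_k$.
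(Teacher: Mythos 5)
Your overall architecture (duality between \eqref{combscales} and \eqref{dualcombscales}, an $L^2$ bound from Plancherel and Cauchy--Schwarz using (ii), an $L^1$-type endpoint from (i), real interpolation to reach $L^{p,q}$, and finally an embedding of a sequence-Lorentz norm handled as in Lemma \ref{weightedinterpol}) matches the paper's proof. But the $L^1$ endpoint step as you describe it is a genuine misstep. You propose to prove that the operator is weak type $(1,1)$ by verifying a H\"ormander condition and invoking the Benedek--Calder\'on--Panzone theorem, and you assert that (i) and (ii) "are exactly the hypotheses" for this. They are not: hypotheses (i) and (ii) impose no regularity whatsoever on the kernels $\beta_k$ (no smoothness, no decay, no cancellation), so the H\"ormander condition $\int_{|x|\ge 2|y|}\|K(x-y)-K(x)\|\,dx\lc 1$ cannot be extracted from them, and there is no standard result to cite that does this. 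You correctly identify this verification as "the main obstacle," but the obstacle is insurmountable by that route.

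Fortunately the step is also unnecessary, and the repair is trivial: hypothesis (i) already gives the \emph{strong} bound from $L^1$ of the product measure space $\fm_d$ (Lebesgue measure times counting measure on $\bbZ$) into $L^1(\bbR^d)$, since by the triangle inequality and Young's inequality
$\|\sum_k\beta_k*f_k\|_1\le\sum_k\|\beta_k\|_1\|f_k\|_1\le\sup_k\|\beta_k\|_1\cdot\|\{f_k\}\|_{L^1(\fm_d)}$.
Real interpolation of this with your (correct) $L^2(\fm_d)\to L^2$ bound immediately gives $L^{p,q}(\fm_d)\to L^{p,q}(\bbR^d)$ for all $1<p<2$ and all $q$, with no Calder\'on--Zygmund theory and no intermediate Marcinkiewicz step; this is exactly what the paper does. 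One further small correction to your last paragraph: the embedding you need goes the other way from what you wrote --- you must show $\ell^p(L^{p,q}(\sH))\hookrightarrow L^{p,q}(\fm_d;\sH)$, i.e.\ $\|\{f_k\}\|_{L^{p,q}(\fm_d;\sH)}\lc(\sum_k\|f_k\|_{L^{p,q}(\sH)}^p)^{1/p}$, which for $q\ge p$ is the $\ell^{q/p}$-triangle-inequality computation displayed in the paper's proof.
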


\begin{proof}
By duality  \eqref{combscales} and \eqref{dualcombscales} are equivalent. 
To see \eqref{combscales} 
we define   $\fm_d$ 
to be the 
 product measure on $\Bbb R^{d}\times \bbZ$ 
of Lebesgue measure on $\bbR^{d}$ and
counting measure on $\bbZ$. Define an operator $P$ acting on
functions  $(x,k)\mapsto f_k(x)$, letting $F=\{f_k\}$, by
$PF=\sum_{k}\beta_k*f_k$.
By assumption (i)  $P$  maps the space
$L^1(\Bbb R^{d}\times \bbZ,\fm_d)$ to $L^1(\bbR^{d})$
 and by the almost orthogonality assumption (ii)  it maps
$L^2(\Bbb R^{d}\times \bbZ,\fm_d)$ to $L^2(\bbR^{d})$. Hence by real interpolation
$P$ maps
$L^{p,q}(\Bbb R^{d}\times \bbZ,\fm_d)$ to $L^{p,q}(\bbR^{d})$ for all $1<p<2$ and $q>0$. Let
$$E_{k,m}(F)=\{x:|f_k(x)|_{\sH}>2^m\} .$$
If $p\le q$ we have, by the  triangle inequality in $\ell^{q/p}$,
\begin{multline*}
\|F\|_{L^{p,q}(\fm_d;\sH)}\, \approx \Big(\sum_{m}2^{mq}
\Big|\sum_{k}\meas(E_{k,m}(F))
\Big|^{\frac qp}\Big)^{\frac 1q}\\ \le
\Big(\sum_k\Big(\sum_m2^{mq}
\Big|\meas(E_{k,m}(F))\Big|^{\frac qp}\Big)^{\frac pq}\Big)^{\frac{1}{p}}\,
\approx
(\sum_k \|f_k\|_{L^{p,q}(\sH)} ^p)^{1/p}\,,
\end{multline*}
where for $q=\infty$ we make the usual modification.
This proves \eqref{combscales}.
\end{proof}

\subsubsection*{Proof of Theorem \ref{maxmult}, conclusion}  Now let $\frac{2(d+1)}{d-1}<p<\infty$ and  $p\le q\le\infty$.  
Let $\phi$ be as above and define 
 $L_k$ by $\widehat {L_k f}(\xi)= \widehat \phi(2^{-k}\xi)\widehat f(\xi)$. We may then  estimate 
\[
\|M_m f\|_p \le \Big(\sum_{k\in \bbZ}
\big\|\sup_{1\le t\le 2} |T_{m(2^k t\cdot)} L_k f| \big\|_p^p\Big)^{1/p}\,.
\]
For every $k\in \bbZ$,
$$\big\|\sup_{1\le t\le 2} |T_{m(2^k t\cdot)} L_k f| \big\|_p \le C\|m\|_{B^2_{d/2-d/p, q}} \|L_kf\|_{L^{p,q'}}\,;$$
this follows for  $k=0$ by duality  from  Proposition \ref{dualmax}, and then 
for general $k$ by scaling. 
By Lemma \ref{Lpellp} 
$$
\Big(\sum_{k\in \bbZ}
\big\| L_k f \big\|_{L^{p,q'}}^p\Big)^{1/p} \lc \|f\|_{L^{p,q'}}$$
and combining the estimates we are done. \qed


\section{Proofs of Theorems \ref{sqthm} and \ref{fmthm}}
\label{combscalessect}
Many endpoint bounds for convolution operators
on Lebesgue spaces can be obtained by interpolation
involving a Hardy space estimate and an $L^2$ estimate; this idea goes back to
\cite{steinannbmo}, \cite{fs-hardy}. In some
instances it has been  advantageous to use
Hardy space or $BMO$ methods
such as atomic decompositions or the Fefferman--Stein $\#$-maximal function
directly on $L^p$ to prove theorems which cannot
immediately be obtained by interpolation (see for example endpoint questions
treated in \cite{se-stud}, \cite{stw},  \cite{lrs}, \cite{hns},
\cite{prs}). We
formulate such a result suitable for application in the proofs of Theorems
\ref{sqthm} and \ref{fmthm}. In order to
give a unified treatment we need to consider
vector-valued operators.

Let $\sH_1$, $\sH_2$ be Hilbert spaces. We  consider translation invariant
operators mapping $L^2(\sH_1)$ to $L^2(\sH_2)$, with convolution kernels
having values
in the space $\cL(\sH_1,\sH_2)$ of bounded operators from $\sH_1$ to $\sH_2$.
 On the Fourier transform side,
the operators are given by $\widehat {Tf}(\xi)
= M(\xi) \widehat f(\xi)$ where
$\widehat  f(\xi)\in \sH_1$, $\widehat {Tf}(\xi)\in \sH_2$, with
$\sup_\xi |M(\xi)|_{\cL(\sH_1,\sH_2)}<\infty$.
If $S$ is an $L^2(\bbR^d)$ convolution operator with scalar kernel (and multiplier) and $\sH$ is a Hilbert space then  $S$ extends to a bounded operator on
$L^2(\bbR^d, \sH)$, denoted temporarily by $S\otimes Id_\sH$.
 If $T$ is as before with $\cL(\sH_1,\sH_2)$-valued kernel then
$(S\otimes Id_{\sH_2})T= T(S\otimes Id_{\sH_1})$. With a slight abuse of
notation we shall continue to write $S$ for either
$S\otimes Id_{\sH_2}$ and $S\otimes Id_{\sH_1}$.

We need to  formulate a hypothesis which will be used for
  convolution operators with multipliers  compactly supported away from the origin.

\begin{hypothesis}\label{hypothesis} Let $1<p<2$, $p\le q\le \infty$, $\eps>0$ and $A>0$.
We say that the kernel $\sK$ satisfies
 $\text{Hyp}(p,q,\eps, A)$
if
for every $\ell\ge 0$ one can split the kernel
into a short and long range contribution
$$
\sK= \sK_\ell^{\sh}+\sK_\ell^{\lg}$$ so that  the following properties hold:

(i) $\sK_\ell^{\sh}$ is supported in $\{x:|x|\le 2^{\ell+10}\}$.

(ii) $\sup_{\xi\in \bbR^d} \big|\cF[\sK_\ell^\sh](\xi)|_{\cL(\sH_1,\sH_2)}\le A$.

(iii) For every
family of $L^2$ functions $\{a_\fz\}_{\fz\in \bbZ^d}$,
with $\supp\,( a_\fz)\in R^\ell_\fz$ and  $\sup_\fz \|a_{\fz}\|_{L^2(\sH_1)}\le 1$, and
for $\gamma\in \ell^p(\bbZ^d)$ the inequality
$$\Big\|\sum_{\fz} \sK_\ell^{\lg}* (\gamma(\fz) a_{\fz})\Big\|_{L^{p,q}}
\le A 2^{\ell (d(\frac 1p-\frac 12)-\eps)} \Big(\sum_\fz |\gamma(\fz)|^p\Big)^{1/p}\,$$
holds.
\end{hypothesis}

\begin{theorem} \label{atomic} Given $p\in (1,2)$, $p\le q\le \infty$, $\eps>0$ and $A>0$
suppose that  $\sK^k$, $k\in \bbZ$  are $\cL(\sH_1,\sH_2)$-valued
 kernels satisfying hypothesis
 $\text{Hyp}(p,q,\eps, A)$.
Define  the convolution operator $T_k$ by
$$T_k f(x)= \int 2^{kd} \sK^k(2^k(x-y))f(y) dy  $$
Let $\eta$ be a scalar Schwartz function with $\widehat \eta$ supported in $\{\xi:1/4\le |\xi|\le 4\}$ and let $\eta_k=2^{kd}\eta(2^k\cdot)$.
Then  the operator $f\mapsto \sum_{k\in \bbZ} \eta_k* T_k f$, initially
defined on $\sH_1$ valued Schwartz functions with
 compact Fourier support away from the origin,  extends to  an operator
acting on  all $f\in L^p(\sH_1)$ so that the inequality
$$\Big\|\sum_k \eta_k*T_k f\Big\|_{L^{p,q}(\sH_2)}\le C_p A
\|f\|_{L^p(\sH_1)}$$
holds.
\end{theorem}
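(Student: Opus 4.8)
The plan is to prove Theorem~\ref{atomic} by the standard Fefferman--Stein argument: interpolate between a Hardy-space (or, rather, directly an $L^{p,q}$) bound obtained via atomic decomposition and an $L^2$ bound obtained by orthogonality in $k$. First I would record the $L^2$ estimate. Since $\widehat\eta$ is supported in an annulus $\{1/4\le|\xi|\le 4\}$, the pieces $\eta_k*T_k$ have Fourier supports with bounded overlap in $k$; combined with $\sup_\xi|M_k(\xi)|_{\cL(\sH_1,\sH_2)}\le A$ (which follows from Hyp(ii) once one notes $\cF[\sK^k]=\cF[\sK_\ell^{\sh}]+\cF[\sK_\ell^{\lg}]$ and the long-range Fourier transform is controlled by testing against a single atom, or more simply from the hypothesis applied with $\ell=0$), this gives $\bigl\|\sum_k\eta_k*T_kf\bigr\|_{L^2(\sH_2)}\lc A\|f\|_{L^2(\sH_1)}$ by Plancherel and Cotlar--Stein-type almost orthogonality.

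Next I would set up the atomic piece. Fix $p_0\in(1,p)$ (we will interpolate $L^{p_0}$ and $L^2$ to land at $L^{p,q}$, or we may work directly at $L^{p,q}$ via the restricted-weak-type / Calder\'on--Zygmund decomposition). Take an $L^p(\sH_1)$ atom $a$ supported on a cube $R$ of sidelength $2^\ell$ with $\|a\|_{L^2(\sH_1)}\le |R|^{1/2-1/p}$ and $\int a=0$; by scaling and translation invariance we may assume $R=R^\ell_{\mathbf 0}$ and that $a$ has $\|a\|_{L^2(\sH_1)}\le 1$ up to the normalizing factor $2^{\ell d(1/p-1/2)}$. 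For each $k$, split $\sK^k=\sK^{k,\sh}_{\ell'}+\sK^{k,\lg}_{\ell'}$ at the scale $\ell'$ matching $2^\ell$ relative to the dilation $2^k$ (i.e. so that $2^{k}\cdot 2^{\ell'}\sim 2^\ell$, handling $k$ with $2^{-k}\gtrsim 2^\ell$ and $2^{-k}\lesssim 2^\ell$ separately). For the short-range parts, $\eta_k*T_k^{\sh}a$ is supported near a fixed dilate of $R$, so summing over $k$ and using the $L^2$ bound from Hyp(ii) together with Cauchy--Schwarz over the support gives an acceptable $L^1$ (hence $L^{p,q}$ on a set of controlled measure) bound; the cancellation $\int a=0$ is used against the smoothness of $\eta_k$ to sum the tails in $k$. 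For the long-range parts one invokes Hyp(iii) directly: writing $a=\gamma(\mathbf 0)a_{\mathbf 0}$ with the single coefficient $\gamma(\mathbf 0)=2^{\ell d(1/p-1/2)}$, the hypothesis yields $\bigl\|\sum_k\eta_k*\sK^{k,\lg}*a\bigr\|_{L^{p,q}(\sH_2)}\lc A\,2^{\ell(d(1/p-1/2)-\eps)}\cdot 2^{-\ell d(1/p-1/2)}=A\,2^{-\ell\eps}$ per scale; here I should be careful that the $\eta_k$ convolution is harmless (it only smooths and can be absorbed, perhaps by enlarging $\sK^{k,\lg}$ or by a second, easy orthogonality in $k$ for the long-range sum). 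Summing the geometric series $\sum_{\ell\ge 0}2^{-\ell\eps}$ gives the uniform atom bound.

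With the atom estimate in hand, the conclusion follows by the usual Calder\'on--Zygmund / Fefferman--Stein machinery: decompose an arbitrary $f\in L^p(\sH_1)$ at height $\lambda$, handle the good part by the $L^2$ bound and Tshebyshev, and the bad part (a sum of atoms) by the uniform atomic estimate, upgrading to the Lorentz space $L^{p,q}$ by real interpolation exactly as in Lemma~\ref{weightedinterpol} and Lemma~\ref{Lpellp} — indeed the structure here parallels the proof of Proposition~\ref{main2}. The main obstacle I anticipate is the bookkeeping in the short-range sum over $k$: one must exploit the mean-zero property of atoms against the decay of $\eta$ (giving a factor $2^{-|k-k_0|}$ or $2^{(k-k_0)d}$ depending on the regime) to sum over all $k\in\bbZ$, while simultaneously keeping the supports controlled so that the resulting function lives on a set of measure $\lesssim 2^{\ell d}=|R|$; the long-range part is essentially handed to us by Hyp(iii), and the $L^2$ part is routine. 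A secondary technical point is justifying that the operator, a priori defined only on Schwartz functions with Fourier support away from the origin, extends to all of $L^p(\sH_1)$ — this is standard once the a priori inequality is established on the dense subclass.
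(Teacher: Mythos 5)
Your outline is recognizably in the same family as the paper's argument (short/long range splitting, Hyp(ii) for an $L^2$ piece, Hyp(iii) for the long range, a geometric sum in $\ell$), but the central mechanism you propose does not work, for two related reasons. First, the interpolation route ``between $L^{p_0}$ and $L^2$'' is not available: the hypothesis $\text{Hyp}(p,q,\eps,A)$ supplies the long-range $L^{p,q}$ bound only at the single exponent pair $(p,q)$, so there is no second Lebesgue-exponent estimate to interpolate with; moreover, if such an interpolation were possible it would yield the stronger conclusion $L^p\to L^{p,p}=L^p$, which is precisely what fails in the applications (this is why the theorem is stated with target $L^{p,q}$, $q\ge p$). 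The paper explicitly works \emph{directly} on $L^p$ rather than by interpolation. Second, classical mean-zero atoms are the wrong objects here, and the single-atom estimate does not assemble into the theorem: since $p>1$, summing $\|Ta_j\|_{L^{p,q}}$ over an atomic decomposition is an $\ell^1$ sum while the coefficients are only in $\ell^p$, so the ``usual Calder\'on--Zygmund machinery'' does not close. Relatedly, to invoke Hyp(iii) one must exhibit, for each $\ell$, an entire family $\{a_\fz\}$ indexed by the grid $R^\ell_\fz$ with $L^2(\sH_1)$ normalization and an $\ell^p$ bound on the coefficients $\gamma(\fz)$ that is ultimately controlled by $\|f\|_p$; a one-atom-at-a-time verification does not produce this.

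What the paper actually does, and what is missing from your plan, is a combined frequency--spatial decomposition in the style of Chang--Fefferman: one first frequency-localizes $f$ into $\cL_k f$, forms the Peetre maximal square function $\fS f$ and its level sets $\Omega_n$, and defines non-normalized ``atoms'' $a_{k,W,n}$ as restrictions of $\cL_k f$ to dyadic cubes selected by a stopping-time condition inside Whitney cubes $W$ of $\Omega_n^*$. The cancellation you want to extract from $\int a=0$ against the smoothness of $\eta_k$ is instead built in automatically because each $a_{k,W,n}$ is already a piece of a function with frequency support $\sim 2^k$ (one writes $\eta_k=\psi_k*\phi_k$ and uses almost orthogonality of the $\psi_k$, via Lemma~\ref{Lpellp}). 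The key bookkeeping estimate is \eqref{L2atomicest}, $\sum_{W\in\cW_n}\sum_k\|a_{k,W,n}\|_{L^2(\sH_1)}^2\lc 2^{2n}\meas(\Omega_n)$, which (i) together with the localization to $\Omega_n^*$ and H\"older gives the short-range bound in $L^r$, $r<2$, for each fixed $n$, summed over $n$ by the real-interpolation Lemma 2.2 of \cite{hns}; and (ii) via H\"older again yields the $\ell^p$ coefficient bound \eqref{Lpatomineq} needed to apply Hyp(iii) to the whole family $\{a_{k,W}\}$ at once, with $\sum_n 2^{np}\meas(\Omega_n)\lc\|\fS f\|_p^p\lc\|f\|_p^p$ closing the estimate. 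Without this decomposition and the estimate \eqref{L2atomicest}, both the summation over $k$ in the short range and the application of Hyp(iii) in the long range are out of reach.
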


The proof of Theorem \ref{atomic} is by now quite standard, but for
completeness  we include it in Appendix \ref{atomicpf}
 below.  Given Theorem \ref{atomic} we now show how it can be used to
deduce  Theorems \ref{sqthm} and \ref{fmthm} from  the results in \S\ref{convsphmeas}.

\begin{remark}\label{p1remark} We actually prove a slightly more general result: Assuming that the estimate of Proposition~\ref{main2} holds for some exponent  $p_1\in (1, \frac{2d}{d+1})$ then the
conclusion of Theorem \ref{fmthm} holds for $1<p<p_1$ and the 
conclusion of  Theorem \ref{sqthm} holds for $p_1'<p<\infty$. A similar remark also applies to Theorem \ref{maxmult}.
\end{remark}

\subsection*{Proof of Theorem \ref{sqthm}}
With $p_1$ as in Remark \ref{p1remark},
 by duality and changes of variables $t= 2^{k} s$ it is enough to show  that, for $1<p<p_1$ and
$\alpha=d(\frac 1p-\frac 12)$,
\Be\label{globalsq}
\Big\|\sum_{k\in \bbZ} \int_1^2 \cF^{-1}\Big[\frac{|\xi|^2}{2^{2k}s^2}
\Big(1-\frac{|\xi|^2}{2^{2k}s^2}\Big)_+^{\alpha-1}\widehat f_s\Big] \frac{ds}{s}
\Big\|_{L^{p,2}}\lc \Big\|\Big(\int_1^2|f_s|^2 \frac{ds}{s}\Big)^{1/2}\Big\|_p\,.
 \Ee
Let $\phi$ be  such that $\widehat \phi$ is supported in $\{1/4\le |\xi|\le 4\}$
with $\widehat \phi(\xi)=1$ in $\{1/3\le|\xi|\le 3\}$.
Let \Be\label{besselalpha}
\Jscr_\alpha(\rho)= \rho^{-\frac{d-2}{2}-\alpha} J_{\frac{d-2}{2}+\alpha}(\rho)
\Ee
so that
$\cF[\Jscr_\alpha(t|\cdot|)](\xi)= c_\alpha t^{-d}
 (1-|\xi|^2/t^2)_+^{\alpha-1}$ (see Chapter VII of \cite{stw}).  In particular $\Jscr_0=\Jscr$ as in
 \eqref{besselfct}.  Let $\phi_k=2^{kd}\phi(2^k\cdot)$.
Then \eqref{globalsq}  follows from
\Be\label{globalmod}\Big\|\sum_{k\in \bbZ} \phi_k*\int_1^2
\int \Jscr_\alpha(s|y|) f_s(\cdot-y) dy \frac{ds}{s}
\Big\|_{L^{p,2}}\lc \Big\|\Big(\int_1^2|f_s|^2 \frac{ds}{s}\Big)^{1/2}\Big\|_p\,.
 \Ee
The reduction of \eqref{globalsq} to \eqref{globalmod} involves incorporating irrelevant powers of $s$ in the definition of $f_s$ and an
 application of standard estimates for vector-valued singular integrals (\cite{stein-si})
to handle the contribution of
$(1-|\xi|^2)^{\alpha-1}_+$ away from the unit sphere. We omit the details.

We now split $\phi=\eta*\psi*\psi$ where $\widehat \eta$ has the same support as $\widehat \phi$ and $\psi$ is a radial $ C^\infty_0$ function
supported in $\{x:|x|\le 1/10\}$, furthermore
$\widehat \psi$ vanishes to order $10 d$ at the origin.
If $\sH_1= L^2([1,2], dr)$ then we wish to apply
 Theorem \ref{atomic} with the $\sH_1'$ valued  kernel
$\sK^k\equiv \sK$ (independent of $k$) defined
by
\Be \label{sKdef}
\inn{\sK(x)}{v} =\int_1^2 v(s)
\int_0^\infty \Jscr_\alpha(sr) \psi*\sigma_r (x)dr \, ds\,.
\Ee
We define the  corresponding
 short range kernel $\sK^{\sh}_\ell$ by
letting the $r$-integral in \eqref{sKdef}
 extend over $[0, 2^{\ell+2}]$ and the long range kernel
$\sK_\ell^{\lg}$ by
letting the $r$-integral extend over $(2^{\ell+2},\infty)$.

Clearly the support condition (i) in Hypothesis   \ref{hypothesis} holds.
Note that $d/p-d/2>1/2$ for $p<2d/(d+1)$.
Thus to check condition (ii) of Hypothesis   \ref{hypothesis}
it  suffices to  verify that
$$\sup_{\xi\in \bbR^d} \Big(\int_1^2\Big|\int_0^{2^{\ell+2}} \Jscr_\alpha(rs)
\widehat\psi(\xi)^2\widehat \sigma_r(\xi) dr\Big|^2
ds\Big)^{1/2} \le A_\alpha\,,\quad\text{  $\alpha>1/2$ }\,.$$
Writing $\widehat\psi(\xi)=u(|\xi|)$, this reduces to
\Be\label{shortrangeassu}
\sup_{\rho>0}
|u(\rho)|^2 \Big(\int_1^2\Big|
\int_0^{2^{\ell+2}} \Jscr_\alpha(rs) \Jscr(r\rho) r^{d-1} dr \Big|^2 ds\Big)^{1/2}
\lc A_\alpha\,.
\Ee
We may take the $r$-integral over $[1,2^{\ell+2}]$ since the estimate for
 the contribution for $r\in [0,1]$ is  immediate. We use
the standard asymptotic expansions for the modified Bessel-function
 $\Jscr_\alpha$,
\Be\label{besselasy}\Jscr_\alpha(u)=
u^{-\frac {d-1}2-\alpha}
\Big[\sum_{n=0}^1 u^{-n}
\big(c_{n,\alpha}^+ e^{iu} +
c_{n, \alpha}^{-}  e^{-iu}\big)
 +
O(|u|^{-2})\Big], \quad u\ge 1\,
\Ee
and also the analogous expansion for  $\Jscr=\Jscr_0$.
If we consider   only the leading terms in both asymptotic expansions
we are led to bound
$$\sup_{\rho>0}
\frac{|u(\rho)|^2}{\rho^{\frac{d-1}{2}}} \Big(\int_1^2\Big|
\int_1^{2^{\ell+2}}   e^{ir(\pm s\pm \rho)} r^{-\alpha} dr \Big|^2 ds\Big)^{1/2}
\lc A_\alpha, \quad \alpha>1/2\,,
$$
which follows from Plancherel's theorem on $\bbR$.
The other terms with lower order or  nonoscillatory error terms are similar or  more straightforward.
Note that we also use $|u(\rho)|\le \rho^{10 d}$ for $\rho\in (0,1)$. This establishes condition (ii) in Hypothesis \ref{hypothesis}.

Finally we verify condition (iii).
Let $\{a_{\fz}\}_{\fz\in \bbZ^d}$ be $L^2(\sH_1)$ functions with $\sup_{\fz}\|a_{\fz}\|_{L^2(\sH_1)}\le 1$,  supported on
$2^{\ell}$-cubes with disjoint interiors.  We then need to show that
\begin{multline} \label{longrangeassu}\Big\|\sum_{j\ge\ell+2} \int_{I_j}  \psi*\psi*\sigma_r *
\sum_{\fz}
\gamma(\fz) \int_1^2 \Jscr_\alpha(sr)
 a_{\fz}(s,\cdot) ds
\Big\|_{L^{p,2}}\\
\lc A 2^{\ell (d(\frac 1p-\frac 12)-\eps)} \Big(\sum_\fz |\gamma(\fz)|^p\Big)^{1/p}\,.
\end{multline}
Setting $$c_{j,\fz}=\Big(\int_{I_j}\int_{R_\fz}\Big|
\int_1^2 \Jscr_\alpha(sr)
 a_{\fz}(s,x) ds
\Big|^2 \frac{dr}{r} dx\Big)^{1/2}$$
 we may apply
Proposition \ref{technicalprop}
for $q=2$ with
$$\beta_j(\fz)= 2^{jd/p}\gamma(\fz) c_{j,\fz}\,, \quad\text{ and  }\quad
b_{j,\fz}(r, x) =\chi_{I_j}(r) c_{j,\fz}^{-1}
\int_1^2 \Jscr_\alpha(sr)
 a_{\fz}(s,x) ds$$ if
$c_{j,\fz}\neq 0$ and
$b_{j,\fz}=0 $ if $c_{j,\fz}=0$.
We can then  dominate  the left-hand side of \eqref{longrangeassu}  by
a constant times
$$2^{\ell(\frac dp-\frac d2-\eps(p))}
\Big(\sum_{\fz}\Big(\sum_j|\beta_j(\fz)|^2\Big)^{p/2}\Big)^{1/p}
\,$$
with $\eps(p)>0$ for $p<p_1$.
We are only left to show that for fixed $\fz$
$$\Big(\sum_j|\beta_j(\fz)|^2\Big)^{1/2} \lc |\gamma(\fz)|\,$$
where the implicit constant is uniform in $\fz$.
This estimate follows from
\Be\label{betaestfizedz}
\sum_{j\ge\ell+2} 2^{2jd/p}\int_{I_j}\Big|\int_1^2 \Jscr_\alpha(sr)
 a_{\fz}(s,x) ds\Big|^2 \frac{dr}{r} \lc\int_1^2|a_{\fz}(s,x)|^2 ds
\Ee
and  integration over $x\in R_\fz$. To see \eqref{betaestfizedz} we
use again the asymptotics \eqref{besselasy}.
The estimate for the oscillatory terms (with $n=0,1$) becomes
\begin{multline*}\sum_{j\ge\ell+2} 2^{2jd/p}\int_{I_j}
r^{-2\alpha-2n-d}\Big|\int_1^2 e^{\pm isr} s^{-\frac{d-1}{2}-\alpha-n}  a_{\fz}(s,x) ds\Big|^2 dr
\\ \, \lc\, \int_1^2|a_{\fz}(s,x)|^2 ds\,,
\end{multline*}
and  since $\alpha=d/p-d/2$ it suffices to show
$$\int\Big|\int_1^2 e^{\pm is r} v(s)  a_{\fz}(s,x) dt\Big|^2 dr \lc\int_1^2|a_{\fz}(s,x)|^2 ds$$
with $\sup_s|v(s)|\le C$. But this  is an immediate consequence of
Plancherel's theorem. Lastly, if  in \eqref{betaestfizedz}
we put
the error term $O((sr)^{-\alpha-\frac{d-1}{2}-2})$ for   $\Jscr_\alpha(sr)$
the resulting expression can be easily estimated by
$$ \int_{2^\ell}^\infty r^{-3} dr \Big[ \int_1^2|
a_{\fz}(s,x)| ds\Big]^2\lc\int_1^2|a_{\fz}(s,x)|^2 ds\,.$$
This concludes the proof of \eqref{betaestfizedz}, and thus the proof of Theorem \ref{sqthm}.
\qed

\subsection*{Proof of Theorem \ref{fmthm}}
We apply Theorem \ref{atomic} with $\sH_1=\sH_2=\bbC$.
It is easy to see that it suffices
to show that, for $\alpha=d(1/p-1/2)$,
$$\Big\|\sum_{k\in \bbZ} \cF^{-1}\big[ m_k(2^{-k}|\cdot|) \widehat \eta(2^{-k}\cdot)\widehat f\big]\Big\|_{L^{p,q}}\ls
\sup_k\|m_k\|_{B^2_{\alpha,q}} \|f\|_p,
$$
where $m_k$ are functions  in $B^{2}_{\alpha,q}(\bbR)$
supported in $(1/2,2)$ and $\eta$ is a radial Schwartz function with $\widehat \eta$ supported in the annulus
$\{1/4<|\xi|<4\}$.
Now  write $\cF^{-1}[m_k(|\cdot|) ](x) = \kappa_k(|x|).$
Using polar coordinates and \eqref{besovradial} we see that
\Be\label{kernelestbesov}
\Big(
\sum_{j\ge 1} \Big[ \int_{I_j}|\kappa_k(r)|^2 r^{2d/p} \frac{dr}{r}\Big]^{q/2}\Big)^{1/q}
\lc \|m_k\|_{B^2_{\alpha,q}}, \quad \alpha= d(1/p-1/2)\,,
\Ee
and of course $\sup_{0<r\le 1}|\kappa_k(r)|<\infty$.
With $\psi$ as  in the proof of Theorem \ref{sqthm} it suffices  to show that
 the kernels
$$
K^k_\ell=  K^{k,\sh}_\ell+K^{k,\lg}_\ell
=  \Big[\int_0^{2^{\ell+2}}+\int_{2^{\ell+2}}^\infty\Big]
\kappa_k(r)\, \psi*\psi*\sigma_r \, dr
$$ satisfy the assumptions of Hypothesis \ref{hypothesis}, uniformly in $k$.
Note that by \eqref{Fsigmar}
\begin{align*}
\Big|\cF\Big[ \int_{I_j}\kappa_k(r)\, \psi*\psi*\sigma_r  \,
dr\Big](\xi)\Big|\,\lc\,\int_{I_j}|\kappa_k(r)| r^{\frac{d-1}{2}} dr &
\\
\lc 2^{-j(\frac dp-\frac {d+1}2)}
 \Big(\int_{I_j}|\kappa_k(r)|^2 r^{2d/p} \frac{dr}{r}\Big)^{1/2}&
\end{align*} and since $p<\frac{2d}{d+1}$ we may sum in $j$ to deduce
that $\sup_k\|\widehat K^{k,\sh}_\ell\|_\infty<\infty$.

We turn to the kernels $K_\ell^{k,\lg}$  and again show
using Proposition \ref{technicalprop}
that they suffice condition (iii) in Hypothesis
\ref{hypothesis}.
Define $$\beta_{k,j}(\fz)= \gamma(\fz)\Big(\int_{I_j}|\ka_k(r)|^2 r^{2d/p}\frac {dr}r\Big)^{1/2}$$ and $$b_{k,j,\fz}(r,x) = 2^{-1} [\beta_{k,j}(\fz)]^{-1} 2^{jd/p} \chi_{I_j}(r) \kappa_k(r) a_{\fz}(x)$$
if $\beta_{k,j}(\fz)\neq 0$  (and $b_{k,j,\fz}=0$ otherwise).
Then $$\|b_{k,j,\fz}\|_{L^2(\cH)}= \Big(\int\int_0^\infty|b_{j,z}(r,x)|^2 \frac{dr}{r} dx\Big)^{1/2}\le 1.$$
Now $$K^{k,\lg}_\ell*\sum_\fz \gamma(\fz)a_\fz
= \sum_\fz \sum_{j\ge \ell+2}
\beta_{k,j}(\fz)
\int_{I_j} \psi*\psi*\sigma_r *
b_{k,j,\fz}(r,\cdot) dr$$ and  by   Proposition \ref{technicalprop} we have
\begin{align*}
\Big\|K^{k,\lg}_\ell*\sum_\fz a_\fz\Big\|_p \lc 2^{\ell(\frac dp-\frac d2-\eps(p))}
\Big(\sum_\fz\Big(\sum_j|\beta_{k,j}(\fz)|^q\Big)^{p/q}\Big)^{1/p},
\end{align*}
with $\eps(p)>0$ for $p<p_1$.
Finally, by \eqref{kernelestbesov}
$$\Big(\sum_\fz\Big(\sum_j|\beta_{k,j}(\fz)|^q\Big)^{p/q}\Big)^{1/p}
\lc \Big(\sum_\fz|\gamma(\fz)|^p\Big)^{1/p} \|m_k\|_{B^{2}_{d(1/p-1/2),q}},
$$
which completes the proof.\qed

\appendix
\section{Proof of Theorem \ref{atomic}}\label{atomicpf}\
By normalization we may assume that Hypothesis $\text{Hyp}(p,q,\eps,A)$ holds with $A=1$. We use atomic decompositions in $L^p$  which are
 constructed from  square functions, based on the ideas
by Chang and Fefferman \cite{crf}.
A convenient and useful form is given by an $\ell^2$-valued version of
Peetre's   maximal square function
({\it cf.} \cite{peetre}, \cite{triebel}),
$$
\fS f (x) = \Big(\sum_k \sup_{|y|\le 100 d\cdot 2^{-k}}
|\cL_k f(x+y)|_{\cH_1}^2\Big)^{1/2},
$$
where $\cL_k f=\phi_k*f$, with $\phi_k=2^{kd}\phi(2^k\cdot)$, and $\phi$ is a radial
Schwartz function  with $\widehat \phi$ supported in $\{\xi:1/5<|\xi|<5\}$.
Then $$\|\fS f\|_p\le C_p \|f\|_{L^p(\sH_1)}\,, \quad  1<p<\infty\,.$$

We closely follow the argument in \cite{hns2}. Choose $\phi$ by splitting
the function~$\eta$  in the statement of Theorem \ref{atomic} as
$$\eta=\psi*\phi$$
where $\psi$ is a radial $C^\infty_0$-function with support in $\{x:|x|<1/4\}$
whose Fourier transform vanishes to order $10 d$ at the origin.
We set $\psi_k=2^{kd}\psi(2^k\cdot)$, then $\eta_k=\psi_k*\phi_k$ and we have
$$\sum_k  \eta_k T_k f= \sum_k  \psi_k T_k \cL_k f\,.$$

For  $k\in \bbZ$, we tile $\bbR^{d}$ by the dyadic cubes of sidelength
$2^{-k}$ and   write $L(Q)=-k$
if  the sidelength of a dyadic cube $Q$ is $2^{-k}$.
For each $n\in \bbZ$, let
$$\Omega_n=\{x: \fS f(x)>2^n\}.$$ Let $\cQ^n_{-k}$ be the set
of all dyadic cubes  of sidelength $2^{-k}$ which have the property that
$|Q\cap \Omega_n|\ge |Q|/2$ but
$|Q\cap \Omega_{n+1}|<|Q|/2$.
Let
$$ \Omega_n^*= \{x: M \chi_{\Omega_n}(x)>100^{-d}\}$$
with  $M$  the Hardy--Littlewood maximal operator. The set $\Omega_n^*$ is  open,
contains  $\Omega_n$ and satisfies  $|\Omega_n^*|\lc |\Omega_n|$.
Let $\cW_n$ be  the set of all dyadic cubes $W$ for which the $50$-fold dilate of $W$ is contained in $\Omega_n^*$
and $W$ is maximal with respect to this property.
The collection $\{W\}$ forms a Whitney-type decomposition of $\Omega_n^*$.
The  interiors of the Whitney cubes  are disjoint.
For each
 $W\in\cW_n$ we denote by $W^*$ the tenfold
dilate of $W$; the   dilates $\{W^*: W\in
\cW_n\}$ have still  bounded overlap.

Note that each $Q\in \cQ^n_{-k}$ is contained in a unique $W\in \cW_n$.
For
$W\in \cW_n$,  set
$$a_{k,W,n}
= \sum_{\substack{Q\in\cQ^n_{-k}\\Q\subset W}} (\cL_kf) \chi\ci Q\,,$$
and for any dyadic cube $W$ define
$$a_{k,W}=\sum_{n: W\in \cW_n} a_{k,W,n}.$$
The functions  $a_{k,W,n} $ can be considered as \lq atoms\rq, but
without the usual normalization.
For fixed $n$ one has
 \Be \label{L2atomicest}
\sum_{W\in \cW_n} \sum_{k}\|a_{k,W,n}\|_{L^2(\sH_1)}^2 \lc 2^{2n} \meas(\Omega_n).
\Ee
Indeed (arguing as in \cite{crf}) the left-hand side is equal to
\begin{align*}&\sum_{Q\in \cW_n}
\sum_k \sum_{Q\in \cQ^n_{-k}}\int_Q |\cL_k f(x)|_{\sH_1}^2 \, dx
\\
&\le 2 \sum_{Q\in \cW_n}\sum_k \sum_{Q\in \cQ^n_{-k}}\int_{Q\cap(\Omega_n\setminus
 \Omega_{n+1})} \sup_{|y|\le 2^{-k}\sqrt d} |\cL_k f(x+y)|_{\sH_1}^2 \, dx
\\&\le
 2\int_{\Omega_n \setminus\Omega_{n+1}} \fS f(x)^2  \le 2 \, \meas(\Omega_n) 2^{2(n+1)}\,.
\end{align*}

Let 
$T_{k,\ell}^\lg$, $T_{k,\ell}^\sh$,
  be the convolution operator with kernels $2^{kd}\sK^{k,\lg}_\ell(2^k\cdot)$
and  $2^{kd}\sK^{k,\sh}_\ell(2^k\cdot)$, respectively.
The desired estimate will follow once we establish the short range  inequality
\begin{equation}\label{shortr}
\Big\|\sum_k \sum_{\ell\ge 0}\sum_{\substack{W\in\cup_n \cW_n\\
L(W)=-k+\ell}}
\psi_k*T_{k,\ell}^\sh  a_{k,W}\Big\|_{L^{p}(\sH_2) }  \lc  \|\fS f\|_p\,.
\end{equation}
and for fixed  $\ell\ge 0$ the long range inequality
\begin{equation}\label{longr}
\Big\|\sum_k \sum_{\substack{W\in \cup_n\cW_n\\
L(W)=-k+\ell}}
\psi_k*T_{k,\ell}^\lg  a_{k,W}
\Big\|_{L^{p,q}(\sH_2) }  \lc 2^{-\ell\eps} \|\fS f\|_p\,.
\end{equation}

\subsubsection*{Proof of \eqref{shortr}}
We  prove that for $1<r<2$ and for fixed $n\in \bbZ$
\Be\label{shortrfixedj}
\Big\|\sum_k
 \sum_{\ell\ge 0}\sum_{\substack{W\in \cW_n\\
L(W)=-k+\ell}}
\psi_k*T_{k,\ell}^\sh  a_{k,W,n}
\Big\|_{L^{r}(\sH_2)}^r  \le C_r \,
\,2^{nr} \meas (\Omega_n).
\Ee
By \lq real interpolation\rq \  (cf. Lemma  2.2 in \cite{hns}) it follows that the stronger estimate
\Be\notag
\Big\|
\sum_{n} \sum_k\sum_{\ell\ge 0}\sum_{\substack{W\in \cW_n\\
L(W)=-k+\ell}}
\psi_k*T_{k,\ell}^\sh  a_{k,W,n}
\Big\|_{L^{p}(\sH_2)}^p  \lc \,
\sum_n\,2^{np} \meas (\Omega_n).
\Ee
holds and this implies \eqref{shortr} since  $\sum_n 2^{np}
\meas(\Om_n)\lc\|\fS f\|_p^p$.

Since the expression inside the norm in \eqref{shortrfixedj} is supported  in
$\Omega_n^*$ we see that the  left-hand side of \eqref{shortrfixedj} is dominated by
\Be
\label{shortrfixedjL2}
\meas (\Omega_n^*)^{1-r/2}\Big\|\sum_k
 \sum_{\ell\ge 0}\sum_{\substack{W\in \cW_n\\
L(W)=-k+\ell}} \psi_k*T_{k,\ell}^\sh  a_{k,W,n}
\Big\|_{L^{2}(\sH_2)}^r\,.
\Ee
The convolution operators with kernel $\psi_k$  are
 almost orthogonal and thus we can dominate the left-hand side of
 \eqref{shortrfixedjL2} by a constant times
\begin{multline}
\label{shortrfixedjL2orth}
\meas (\Omega_n^*)^{1-r/2}\Big(\sum_k\Big\|
 \sum_{\ell\ge 0}\sum_{\substack{W\in \cW_n\\
L(W)=-k+\ell}}
T_{k,\ell}^\sh  a_{k,W,n}
\Big\|_{L^{2}(\sH_2)}^2\Big)^{r/2}\,.
\end{multline}
Now,  for each  $W$ with $L(W)=-k+\ell$,
the function $T_{k,\ell}^\sh  a_{k,W,n}$ is
supported in the expanded cube $W^*$.
The
cubes $W^*$ with $W\in \Omega_j$  have bounded overlap, and therefore
the expression
\eqref{shortrfixedjL2orth} is
\Be
\label{shortrsumW}
\lc\meas (\Omega_n^*)^{1-r/2}
\Big(\sum_k\sum_{\ell\ge 0}\sum_{\substack{W\in \cW_n\\
L(W)=-k+\ell}}
\Big\|
T_{k,\ell}^\sh  a_{k,W,n}
\Big\|_{L^{2}(\sH_2)}^2
\Big)^{r/2}\,.
\Ee
Now we have for fixed $W$
$$\big\|T_{k,\ell}^\sh  a_{k,W,n}
\big\|_{L^{2}(\sH_2)}\lc \|  a_{k,W,n}\|_{L^2(\sH_1)}.$$
By  \eqref{L2atomicest} we have
\[\sum_k\sum_{\ell\ge 0}\sum_{\substack{W\in \cW_n\\
L(W)=-k+\ell}}
\big\| a_{k,W,n}\big\|_2^2 \lc
\sum_{W\in \cW_n}\sum_k\| a_{k,W,n}\big\|_2^2 \lc
2^{2n}\meas(\Omega_n).\]
Since $\meas(\Omega_n^*)\lc
\meas(\Omega_n)$ it follows that the right-hand side of
\eqref{shortrsumW}
is dominated by a constant times
$ \meas(\Omega_n) 2^{nr}$
which then yields \eqref{shortrfixedj} and finishes the proof of the
short range estimate.

\subsubsection*{Proof of \eqref{longr}}
We use the first estimate in 
Lemma \ref{Lpellp}, with  $\beta_k=\psi_k$, and the $\sH_2$ valued functions
  $F_k= \sum_{\substack{W:\\L(W)=-k+\ell}}
 T^\lg_{k,\ell}
 a_{k,W}$.
We then see that \eqref{longr} follows from
\begin{equation}
\label{longrsumjelllp}
\sum_k\Big\|\sum_{\substack{W:\\L(W)=-k+\ell}}
 T^\lg_{k,\ell}
 a_{k,W}
\Big\|_{L^{p,q}(\sH_2) }^p  \lc  2^{- \ell \eps p }
 \sum_n\meas(\Omega_n) 2^{np}\,.
\Ee
By rescaling and assumption (iii) in Definition \ref{hypothesis}
we have
for every $k$
\begin{multline*}
\Big\|T^\lg_{k,\ell}\big[ \sum_{\substack{W:\\
L(W)=-k+\ell}}
  a_{k,W}\big]\Big\|_{L^{p,q}(\sH_2) }  \\
\lc  2^{-\ell  \eps}
2^{(\ell-k) d(\frac 1p-\frac 12)}
\Big(\sum_{\substack{W:\\L(W)=-k+\ell}}
\|a_{k,W}\|_{L^2(\sH_1)}^p \Big)^{1/p}\,.
\end{multline*}
Thus in order to finish the proof we need the inequality
\Be\label{Lpatomineq}
\sum_{k}
\sum_{\substack{W:\\L(W)=-k+\ell}} 2^{(\ell-k) d(1-\frac p2)}
\|a_{k,W}\|_{L^2(\sH_1)}^p  \lc \sum_n 2^{np} \meas(\Omega_n)\,.
\Ee
For fixed $k$ and fixed $W$,
 the functions $ a_{k,W,n}$, $n\in \bbZ$ live on disjoint sets (since the dyadic
 cubes of sidelength  $2^{-k}$ are disjoint and each such cube
 is in exactly one
 family $\cQ_{-k}^n$).
Therefore
$$\|a_{k,W}\|_{L^2(\sH_1)} \lc\Big(\sum_n\|a_{k,W,n}\|^2_{L^2(\sH_1)}\Big)^{1/2}$$
and thus we can
bound the left-hand side
of \eqref{Lpatomineq} by
\begin{align*}
&\sum_n \sum_{k\in \bbZ}\sum_{\substack{W\in \cW_n:\\
L(W)=-k+\ell}}
2^{(\ell-k) d(1-\frac p2)}
\|a_{k,W,n}\|_{L^2(\sH_1)}^p
\\
&\le \sum_n \Big(\sum_{k\in \bbZ}
\sum_{\substack{W\in \cW_n:\\L(W)=-k+\ell}}   \meas(W)\Big)^{1-p/2}
\Big(\sum_k\sum_{\substack{W\in \cW_n:\\
L(W)=-k+\ell}}
\|a_{k,W,n}\|_{L^2(\sH_1)}^2 \Big)^{p/2}
\\
&\le \sum_n  \meas(\Omega_n^*)^{1-p/2}
\Big(\sum_k\sum_{W\in \cW_n}
\|a_{k,W,n}\|_{L^2(\sH_1)}^2 \Big)^{p/2}\,;
\end{align*}
here we used the  disjointness of Whitney cubes in $\cW_n$.
By  \eqref{L2atomicest} the last displayed expression
is bounded by
$$
C \sum_n  \meas(\Omega_n^*)^{1-p/2} (2^{2n}\meas(\Omega_n))^{p/2} \lc \sum_n 2^{np}\meas(\Omega_n)
$$
which gives \eqref{Lpatomineq}.
\qed

\medskip

\end{document}